\documentclass[12pt]{amsart}
\usepackage[utf8]{inputenc}
\usepackage[english]{babel}
\usepackage{subcaption}
\usepackage{enumerate}
\usepackage{amsfonts}
\usepackage{comment}
\usepackage{amsmath,amssymb}
\usepackage[makeroom]{cancel}
\usepackage{stmaryrd}
\usepackage{tikz-cd} 
\usepackage{needspace}
\usepackage{enumerate}
\usepackage{enumitem}
\usepackage{graphicx}
\usepackage{amsthm}
\usepackage{wrapfig}
\usepackage{hyperref}
\usepackage{chngcntr}
\counterwithin{figure}{section}
\newtheorem{theorem}{Theorem}
\newtheorem*{theorem*}{Theorem}
\newtheorem{prop}[theorem]{Proposition}

\newtheorem{lemm}[theorem]{Lemma}

\newtheorem{thdef}[theorem]{Theorem-Definition}
\newtheorem{cor-definition}[theorem]{Corollary-Definition}
\theoremstyle{definition}
\newtheorem{defi/}[theorem]{Definition}
\newenvironment{defi}
  {%
   \pushQED{\qed}\begin{defi/}}
  {\popQED\end{defi/}}
\newtheorem*{defi*}{Definition}
\newtheorem*{prop*}{Proposition}
\newtheorem*{lemm*}{Lemma}
\newtheorem{ques}[theorem]{Question}
\newtheorem{rema}[theorem]{Remark}
\newtheorem{obse}[theorem]{Observation}

\newtheorem*{conv*}{Convention}

\numberwithin{theorem}{section}

\numberwithin{equation}{section}
\title{From Markovian actions to Anosov flows}
\author{François Béguin and Ioannis Iakovoglou}

\begin{document}
\sloppy
\maketitle

\begin{abstract}
   In this paper, we establish a necessary and sufficient condition for a group action on a bifoliated plane to arise from a topological Anosov flow supported by an orientable $3$-manifold. More precisely, we show that a group action on a non-singular bifoliated plane arises from such a flow if and only if it is an orientation preserving strong Markovian action, that is, an orientation preserving action of the plane preserving a family of rectangles satisfying suitable Markov-type properties. Furthermore, we prove that this family of rectangles can be lifted to a Markov partition of the associated flow.

\end{abstract}
\section{Introduction}
Thanks to the work of T. Barbot and S. Fenley, to every Anosov flow $\Phi$ on a closed $3$-manifold $M$ one can associate a topological plane $\mathcal{P}$ endowed with a pair of transverse line foliations $(F^s,F^u)$ together with a continuous action $\rho_{\Phi}:\pi_1(M)\to\mathrm{Homeo}(\mathcal{P})$ preserving $F^s$ and $F^u$. We call $(\mathcal{P},F^s,F^u)$ the \emph{bifoliated plane of $\Phi$} and $\rho_\Phi$ \emph{the action of $\pi_1(M)$ on $(\mathcal{P},F^s,F^u)$}. By a theorem of T. Barbot, the data $(\mathcal{P},F^s,F^u,\rho_{\Phi})$, considered up to conjugation, completely determine $\Phi$ up to orbital equivalence. The previous results define an injective correspondence assigning to each $3$-dimensional Anosov flow, up to orbital equivalence, a conjugacy class of group actions on the plane each preserving a pair of transverse foliations.

Conversely, given a topological plane $\mathcal{P}$, a group $G$ and a continuous action $\rho: G\rightarrow \text{Homeo}(\mathcal{P})$ preserving a pair of transverse line foliations $(\mathcal{F}^s,\mathcal{F}^u)$, it is natural to ask: 

\begin{ques}\label{q.origin}
   Under which conditions do the data $(\mathcal{P},\mathcal{F}^s,\mathcal{F}^u,\rho)$ arise from an Anosov flow on a closed $3$-manifold $M$?
\end{ques}

Motivated by the above question, the second author defined in \cite{markovianactions} the notion of \emph{strong Markovian action}, an action on a bifoliated plane preserving a ``Markov partition". 

\begin{defi}[Markovian family - Informal]\label{d.markovianactioninformal}
 A \emph{strong Markovian family of $\rho$ in $(\mathcal{P},\mathcal{F}^s,\mathcal{F}^u)$} is a $\rho$-invariant set of rectangles $\mathcal{R}=(R_i)_{i \in I}$ (i.e. topological disks that are trivially bifoliated by $\mathcal{F}^s$ and $\mathcal{F}^u$) satisfying the four following properties: 
\begin{enumerate}
\item $\mathcal{R}$ consists of finitely many $\rho$-orbits of rectangles.
\vspace{0.1cm}
\item For every two distinct rectangles $R_i,R_j\in\mathcal{R}$, if $\mathrm{Int}(R_i) \cap \mathrm{Int}(R_j) \neq \emptyset$, then, up to changing the roles of $R_i$ and $R_j$, $R_i \cap R_j$ is a non-trivial horizontal subrectangle of $R_i$ and a non-trivial vertical subrectangle of $R_j$.  
\vspace{0.1cm}
\item For any compact set $U\subset \mathcal{P}$, we have that $U$ can be covered by finitely many rectangles in $\mathcal{R}$. 
\vspace{0.1cm}
\item If $(R_i)_{i \in \mathbb{Z}}$ is a bi-infinite sequence of rectangles in $\mathcal{R}$ such that for every $i\in\mathbb{Z}$ the intersection $R_{i+1}\cap R_i$ is a non-trivial vertical subrectangle of $R_i$, then $\underset{i\in \mathbb{Z} }{\cap}R_i$ contains a unique point. Conversely, every point of $\mathcal{P}$ arises as the intersection of a bi-infinite sequence of rectangles satisfying the previous property.
\end{enumerate}
\end{defi}
For a more detailed definition of the notion of strong Markovian family, see Definition \ref{d.markovfamily}.

\begin{defi}\label{d.markovianaction}
    Consider $\rho$ a continuous and faithful action of a group $G$ on the plane $\mathcal{P}$ that preserves a pair of transverse foliations $(\mathcal{F}^s,\mathcal{F}^u)$. We will say that $\rho$ is a \emph{strong Markovian action} if: 
    \begin{enumerate}
        \item $G$ is a countable group with no torsion and
        \item $\rho$ preserves a strong Markovian family in $(\mathcal{P},\mathcal{F}^s,\mathcal{F}^u)$ 
    \end{enumerate}

\end{defi}
An important source of examples of strong Markovian actions is given by Anosov flows. Indeed, the action of the fundamental group on the bifoliated plane of any Anosov flow is a strong Markovian action (see \cite{monstre}). Conversely, in this paper, we show that strong Markovian actions are precisely the actions of the plane that arise from Anosov flows. More specifically, 
\begin{defi}\label{d.cornercondition}
    Consider $\mathcal{R}$ a strong Markovian family of $\rho$ in $(\mathcal{P},\mathcal{F}^s,\mathcal{F}^u)$. We will say that $\mathcal{R}$ satisfies the \emph{corner condition} if for every $R\in\mathcal{R}$ and every non-trivial element $g\in G$, the homeomorphism $\rho(g)$ fixes a point $x\in\partial R$ if and only if $x$ is a corner of $R$.
\end{defi}

\begin{theorem}\label{t.main}    
     Fix $\mathcal{P}$ a topological plane endowed with a pair of transverse foliations $(\mathcal{F}^s,\mathcal{F}^u)$. Let $\rho: G\rightarrow \text{Homeo}(\mathcal{P})$ be an orientation preserving strong Markovian action preserving a strong Markovian family $\mathcal{R}$ that satisfies the corner condition. 
     
    Then, there exists a closed orientable $3$-manifold $M$, a topological Anosov flow $\Phi=(\Phi_t)_{t\in\mathbb{R}}$ on $M$ and a reduced Markov partition $\mathcal{M}$ of $\Phi$ such that:
    \begin{enumerate}
        \item the bifoliated plane $(\mathcal{P}_{\Phi},\mathcal{F}^s_{\Phi},\mathcal{F}^u_{\Phi})$  of  $\Phi$ is isomorphic to $(\mathcal{P},\mathcal{F}^s,\mathcal{F}^u)$: there exists a homeomorphism $h:\mathcal{P}\rightarrow \mathcal{P}_{\Phi}$ such that $h(\mathcal{F}^{s,u})=\mathcal{F}^{s,u}_{\Phi}$,
        \item if $\rho_{\Phi}$ denotes the action of $\pi_1(M)$ on $(\mathcal{P}_{\Phi},\mathcal{F}^s_{\Phi},\mathcal{F}^u_{\Phi})$, then $\rho_\Phi$ is conjugated to $\rho$: there exists an isomorphism $\alpha:G\to\pi_1(M)$ such that for every $g\in G$ 
        $$h \circ \rho(g)=\rho_{\Phi}(\alpha(g))\circ h $$
        \item $h(\mathcal{R})$ is the projection on $\mathcal{P}_{\Phi}$ of the lift of $\mathcal{M}$ to the universal cover $\widetilde{M}$ of $M$.
    \end{enumerate}
\end{theorem}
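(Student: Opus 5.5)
The plan is to build the flow by hand, gluing together finitely many ``flow boxes'', one for each $\rho$-orbit of rectangles in $\mathcal{R}$. The return maps between these boxes are read off from the Markovian intersection pattern (property (2)). Afterwards we identify the universal cover of the resulting manifold with $\mathcal{P}\times\mathbb{R}$, with $G$ acting equivariantly.

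\textbf{Step 1: an equivariant model on $\mathcal{P}\times\mathbb{R}$.} Using (3) and (4), every point $x\in\mathcal{P}$ lies in some rectangle and is coded by bi-infinite sequences $(R_i)_{i\in\mathbb{Z}}$ with $R_{i+1}\cap R_i$ a vertical subrectangle of $R_i$. This coding defines an oriented ``successor'' relation on rectangles. The idea is to realise each $R\in\mathcal{R}$ as a box $R\times[0,1]$ and to glue the top $R\times\{1\}$ to the bottoms of the successors $R'$ along $R\cap R'$, by the identity of $\mathcal{P}$. The result should be a space $\widetilde M$ with a flow whose leaf space is $\mathcal{P}$. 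Three facts need to be checked:
\begin{itemize}
\item the glued space is homeomorphic to $\mathcal{P}\times\mathbb{R}$ with the vertical flow, which uses property (4) (unique intersection, surjectivity);
\item $G$ acts freely, properly discontinuously and cocompactly, commuting with the flow;
\item cocompactness follows from (1).
\end{itemize}
The corner condition and the absence of torsion are what make the action free. A point of $\partial R$ fixed by some $g\neq 1$ would create a non-Hausdorff or branched identification in the gluing, and the corner condition rules this out everywhere except at corners, where the gluing is locally a product. Torsion-freeness prevents fixed points on the fibres. In practice it is cleaner to choose a $G$-invariant continuous ``time'' function $\mathcal{R}\ni R\mapsto$ height of its box, so that the gluing is strictly $G$-equivariant.

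\textbf{Step 2: the quotient is a closed orientable $3$-manifold carrying a topological Anosov flow.} Set $M=\widetilde M/G$. It is a $3$-manifold because the action is free and properly discontinuous on a $3$-manifold. It is closed by (1), and it is orientable because $\rho$ preserves the orientation of $\mathcal{P}$ and the flow direction. The foliations $\mathcal{F}^{s,u}\times\mathbb{R}$ descend to weak stable and unstable foliations. Expansivity and contraction along leaves come from the Markov property: the nested intersections in (4) shrink to a point. Hence distinct orbits in a common weak-stable leaf must converge, which gives the standard characterisation of a topological Anosov flow (expansive, with transverse weak foliations). Because $\widetilde M\cong\mathcal{P}\times\mathbb{R}$ is simply connected, it is the universal cover and $\pi_1(M)\cong G$, which gives $\alpha$. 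The homeomorphism $h$ is the identity on leaf spaces, and this yields (1) and (2).

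\textbf{Step 3: the Markov partition.} The images in $M$ of the bottom faces $R\times\{0\}$ are finitely many rectangles transverse to $\Phi$. The boxes cover $M$, and the first-return maps send vertical subrectangles to horizontal ones, by property (2) read in the flow direction. This is exactly a Markov partition whose lift projects to $\mathcal{R}$, which gives (3). It is reduced because each box corresponds to a single orbit of rectangles.

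\textbf{Main obstacle.} I expect the main difficulty to be Step 1: showing that the glued space is Hausdorff and is genuinely a product $\mathcal{P}\times\mathbb{R}$. This is where overlapping rectangles sharing boundary leaves and corners must be handled. My first attempt would be to define $\widetilde M$ directly as $\mathcal{P}\times\mathbb{R}$, with the boxes embedded as graphs between two $G$-equivariant continuous height functions attached to the rectangles. This avoids abstract gluing altogether, and reduces the problem to constructing these functions equivariantly by a partition-of-unity argument on the finitely many orbits.
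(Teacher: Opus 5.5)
\textbf{There is a genuine gap in Step~1, and Step~1 is the whole theorem.}

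Your gluing identifies only the top face of the box over $R$ with the bottom faces of the boxes over its successors, and this breaks down along every boundary leaf. The first-generation successors of $R$ cover $R$ and have disjoint interiors. There are at least two of them, since a single one would contain $R$, which the Markovian intersection axiom forbids. So two of them, say $R'_1$ and $R'_2$, meet $R$ in adjacent horizontal strips sharing an $\mathcal{F}^s$-leaf $\sigma$ of $R$. For $x$ in the interior of $\sigma$, the points $(x,1)_R$, $(x,0)_{R'_1}$ and $(x,0)_{R'_2}$ are identified. The arcs $\{x\}\times(0,1)$ in the boxes over $R'_1$ and $R'_2$, however, are glued to nothing. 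The fibre over $x$ therefore contains a triod, and the side faces of these boxes over $\sigma$ stay free.

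This happens along every such leaf, not only at periodic points, so the corner condition and torsion-freeness are irrelevant to it. To repair it you must glue side faces of adjacent boxes. That requires deciding at which heights the lifts of $R'_1$ and $R'_2$ sit over $\sigma$, i.e.\ return times. The combinatorics of $\mathcal{R}$ in $\mathcal{P}$ does not supply this: it only linearly orders the rectangles that contain a point in their interiors.

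Your fallback in $\mathcal{P}\times\mathbb{R}$ does not escape the problem, because you never say how $G$ acts on the $\mathbb{R}$-factor. With the product action $g\cdot(x,t)=(\rho(g)(x),t)$, the action is not free: any $g\neq e$ fixing a periodic point $p$ fixes the line over $p$ pointwise. Moreover, $G$-invariant heights are incompatible with the strict stacking of the boxes over $R,\rho(g)(R),\rho(g^2)(R),\dots$, which near $p$ are pairwise comparable for the predecessor order. What you need is a cocycle $(g,x)\mapsto\phi_g(x,\cdot)\in\mathrm{Homeo}_+(\mathbb{R})$ giving a free, properly discontinuous, cocompact action that commutes with the vertical flow. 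Producing such a cocycle is equivalent to producing the flow.

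\textbf{What the paper does instead.} The missing idea is a \emph{canonical} equivariant line bundle over $\mathcal{P}$. After passing to the index-two subgroup preserving the orientations of $\mathcal{F}^s$ and $\mathcal{F}^u$, the paper takes $W^+=\{(x,y): y\in\mathcal{F}^s_+(x)\setminus\{x\}\}$ with the diagonal action; the fibres of $(x,y)\mapsto x$ are the flow lines.
\begin{itemize}
\item Freeness is immediate from Theorem~\ref{t.markovianisanosovlike}.
\item Proper discontinuity (Proposition~\ref{p.actionproperdisc}) is a genuine argument from expansivity and the Markovian intersection axiom.
\item Cocompactness (Proposition~\ref{p.actionboundedfund}) rests on big brothers and minimal birectangles; this is where the corner condition is actually used.
\item Contraction and expansivity of the flow, which your Step~2 only asserts, are derived from the compact fundamental domain together with proper discontinuity.
\item Reducedness comes from the fact that no rectangle of $\mathcal{R}$ contains another, not from the orbit count.
\end{itemize}

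The canonical model has a cost that is paid afterwards. The natural lifts $\{(x,y): x\in R,\ y\in\partial^u_+\mathcal{B}(R)\}$ overlap and must be pushed apart along the flow. The non-fully-orientable case requires extending the action by an involutive self orbit equivalence and averaging two lifted partitions. A correct box construction would get disjoint interiors and independence from orientations for free, so your route is not hopeless in principle. As written, though, its one essential step is missing.
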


For a definition of the notion of reduced Markov partition, see Definition \ref{d.markovpartition}.

Before proceeding, we make a few remarks concerning the main theorem. First, regarding the corner condition, although Item~(3) of Theorem~\ref{t.main} remains true for strong Markovian families that do not satisfy it, we impose this technical assumption to reduce the combinatorial complexity of our proofs (see Remark~\ref{r.cornercondition}). Moreover, as we shall prove in Proposition~\ref{p.cornerconditionexists}, this assumption is not restrictive: every strong Markovian action preserves a strong Markovian family that satisfies the corner condition. Hence, \emph{every} orientation preserving strong Markovian action on the plane arises from some topological Anosov flow. 

Next, the proof of Theorem~\ref{t.main} relies on purely topological arguments, which allow us to construct topological Anosov flows from bifoliated planes endowed with strong Markovian actions. Although, by a theorem of Mario Shannon (see \cite{Shannon}), every transitive topological Anosov flow is orbitally equivalent to a smooth Anosov flow, the corresponding result is not known in the non-transitive setting. This is why Theorem~\ref{t.main} is stated for topological, rather than smooth, Anosov flows.

We also note that Theorem \ref{t.main} can be generalized for strong Markovian actions of the plane that preserve a pair of singular transverse foliations; in this case, the associated flow is pseudo-Anosov. Since the proof of this more general result is more involved, we postpone it to a subsequent work.

Finally, results closely related to Theorem \ref{t.main} have previously appeared in the literature. The authors of \cite{reconstruct} proved that any Anosov-like action on a singular bifoliated plane preserving the orientations of the singular foliations (which are assumed orientable) arises from a pseudo-Anosov flow. Later, in \cite{reconstruct2} the previous result was generalized for all Anosov-like actions with dense orbits. 

Our result differs from these works in several respects. To begin, our theorem does not require either the preservation of the orientations of the invariant foliations or the existence of dense orbits. Moreover, we emphasize that our hypotheses are formulated in a different language from those of the previous works: rather than assuming topological properties of the action, such as the existence and uniqueness of fixed points and their dynamical properties, we assume the existence of a preserved strong Markovian family. This formulation allows us to capture the finite combinatorial structure underlying Anosov flows, as \cite{preprint} shows that an Anosov flow is completely determined, up to orbit equivalence, by the finite data encoded in a strong Markovian family. As a final remark, our proof shows that every strong Markovian family on a bifoliated plane arises from a topological Anosov flow together with one of its Markov partitions. This further supports the viewpoint developed in \cite{preprint} of studying Anosov flows through strong Markovian families, by showing that these families naturally encode Markov partitions of the associated flows.

\textit{Acknowledgments.} Both authors are supported by the ANR project Anodyne ANR-24-CE40-5065-01.

\section{Preliminaries}

\subsection{Bifoliations on the plane.}

Let $\mathcal{P}$ be a topological plane endowed with a pair of transverse line foliations $(\mathcal{F},\mathcal{G})$. Throughout this paper, we shall refer to $(\mathcal{P},\mathcal{F},\mathcal{G})$ as a \emph{bifoliated plane}.

\begin{defi}
    Two bifoliated planes $(\mathcal{P}_1,\mathcal{F}_1,\mathcal{F}_2)$ and $(\mathcal{P}_2,\mathcal{G}_1,\mathcal{G}_2)$ are called isomorphic if, up to exchanging the roles of $\mathcal{F}_1$ and $\mathcal{F}_2$, there exists a homeomorphism $h:\mathcal{P}_1\rightarrow \mathcal{P}_2$ such that $h(\mathcal{F}_1)=\mathcal{G}_1$ and $h(\mathcal{F}_2)=\mathcal{G}_2$. 
\end{defi}

Since $\mathcal{F}$ and $\mathcal{G}$ are transverse, each point of the bifoliated plane $(\mathcal{P},\mathcal{F},\mathcal{G})$ belongs in the interior of some trivially bifoliated region. Among the trivially bifoliated subsets of $(\mathcal{P},\mathcal{F},\mathcal{G})$, we will pay particular attention to \emph{rectangles}, which appear in the definition of a strong Markovian family.  

\begin{defi}\label{d.rectangle}
 A segment contained in a leaf of $\mathcal{F}$ (resp. $\mathcal{G}$) will be called an $\mathcal{F}$-\emph{segment} (resp. \emph{$\mathcal{G}$-segment}), or simply a \emph{segment of $\mathcal{F}$} (resp. \emph{$\mathcal{G}$}).
 
Consider a closed disk $R$ in $\mathcal{P}$ that is bounded by the union of two $\mathcal{F}$-segments and two $\mathcal{G}$-segments and that is trivially bifoliated by the restrictions of $\mathcal{F}$ and $\mathcal{G}$ to $R$. We will call $R$ a \emph{rectangle} in $(\mathcal{P},\mathcal{F},\mathcal{G})$. 
\end{defi}

Although there exist uncountably many bifoliated planes up to isomorphism, pairs of transverse foliations on the plane satisfy several common topological properties:
\begin{defi}
Let $\mathcal{F}$ be a foliation on $\mathcal{P}$.

\begin{itemize}
    \item For any $x\in\mathcal{P}$, we will denote by $\mathcal{F}(x)$ the leaf of $\mathcal{F}$ containing $x$.
    
    \item For any $x\in\mathcal{P}$, the closure (with respect to the leaf topology) of a connected component of $\mathcal{F}(x)-\{x\}$ will be called an \emph{$\mathcal{F}$-separatrix} of $x$.
\end{itemize}
\end{defi}

\needspace{6\baselineskip}
\begin{prop}\label{p.propertiesoffoliformarkovianactions}
Consider $(\mathcal{P},\mathcal{F},\mathcal{G})$ a bifoliated plane. We have that:
    \begin{enumerate}
        \item The foliations $\mathcal{F}$ and $\mathcal{G}$ do not admit compact leaves. 
        \item Each leaf of $\mathcal{F}$ or $\mathcal{G}$ is a closed and properly embedded line in $\mathcal{P}$. In particular, each $\mathcal{F}$- or $\mathcal{G}$-separatrix of a point $x\in \mathcal{P}$ is a closed and properly embedded half-line in $\mathcal{P}$. 
        \item Any leaf of $\mathcal{F}$ or $\mathcal{G}$ separates $\mathcal{P}$ into 2 simply connected components. Moreover, for every $x\in \mathcal{P}$ the set $$\mathcal{P}-(\mathcal{F}(x)\cup \mathcal{G}(x))$$ consists of 4 connected components. 
        \item Every leaf of  $\mathcal{F}$ intersects at most once any leaf of $\mathcal{G}$.
        \item Any leaf of  $\mathcal{F}$  (resp. $\mathcal{G}$) intersects a rectangle in $(\mathcal{P}, \mathcal{F}, \mathcal{G})$ at most along a unique $\mathcal{F}$-segment (resp. $\mathcal{G}$-segment).

    \end{enumerate}
\end{prop}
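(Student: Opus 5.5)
The plan is to derive everything from one basic tool: a Poincaré–Bendixson type argument on the plane. This rests on two facts. First, a foliation of the plane has no closed leaves. Second, no closed curve in $\mathcal{P}$ is made of a $\mathcal{F}$-segment followed by a $\mathcal{G}$-segment, or more generally meets a leaf of $\mathcal{F}$ (or $\mathcal{G}$) in a single transverse point.

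For Item (1), suppose a leaf $L$ of $\mathcal{F}$ is a circle. By the Jordan–Schoenflies theorem it bounds a closed disk $D$. The foliation $\mathcal{G}$ is transverse to $L$, so it restricts to $D$ as a foliation transverse to the boundary. Doubling $D$ along $L$ gives a line field on $S^2$, which contradicts Poincaré–Hopf since $\chi(S^2)=2\neq 0$. Alternatively, $\mathcal{F}|_D$ is a foliation of a disk tangent to the boundary, which is again impossible by Poincaré–Hopf. The same argument applies to $\mathcal{G}$.

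Next I prove the key lemma: a leaf of $\mathcal{F}$ and a leaf of $\mathcal{G}$ meet at most once. This is Item (4). Suppose $x\neq y$ both lie in $F\cap G$, where $F$ is a leaf of $\mathcal{F}$ and $G$ a leaf of $\mathcal{G}$. Choose the intersection points so that the $\mathcal{F}$-segment $[x,y]_F$ and the $\mathcal{G}$-segment $[x,y]_G$ have disjoint interiors. They then form a Jordan curve $C$ bounding a disk $D$. The foliation $\mathcal{F}$ restricted to $D$ is tangent to one boundary arc and transverse to the other. After smoothing the two corners appropriately, the index computation of Poincaré–Hopf gives a contradiction: the total rotation of a line field along $C$ cannot match the Euler characteristic $1$ of $D$.

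For Item (2), I use Item (4) together with local trivializations. Take a leaf $L$ of $\mathcal{F}$ and a point $z$ in the closure of $L$. In a small trivially bifoliated chart around $z$, the leaf $L$ can accumulate only by crossing the local $\mathcal{G}$-plaque through $z$ infinitely many times, or by meeting it again after leaving the chart. Either way $L$ would meet a single $\mathcal{G}$-leaf at least twice, contradicting Item (4). Hence $L$ meets each chart in at most one plaque. This shows $L$ is closed and its leaf topology agrees with the subspace topology, so $L$ is a properly embedded line, since it is non-compact by Item (1). Separatrices are closed sub-half-lines of $L$, so the same holds for them.

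For Item (3), a properly embedded line in $\mathbb{R}^2$ separates the plane into two components, each homeomorphic to an open disk by Schoenflies applied on the one-point compactification. For the four-component claim, I note that $\mathcal{F}(x)\cup\mathcal{G}(x)$ consists of four properly embedded half-lines meeting only at $x$, by Item (4). Adding the point at infinity turns this into a theta-graph-like configuration on $S^2$: four arcs from $x$ to $\infty$. Its complement has exactly $4$ components. Item (5) then follows from Item (4) together with the product structure of a rectangle $R$. The set $L\cap R$ is a union of $\mathcal{F}$-plaques of $R$. If it contained two distinct plaques, a single $\mathcal{G}$-segment of $R$ would meet $L$ twice, contradicting Item (4).

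The main obstacle is the rigorous version of the index argument in Item (4). I would handle it via Haefliger/Poincaré–Bendixson: in $D$, the foliation $\mathcal{G}$ is tangent to $[x,y]_G$ and transverse to $[x,y]_F$. Follow a $\mathcal{G}$-leaf entering $D$ through an interior point of $[x,y]_F$. By Item (1) and the absence of Reeb-type behavior in the plane, it cannot stay in $D$ forever without accumulating on a closed leaf or a singularity. So it must exit through $[x,y]_F$. This produces a smaller configuration of the same kind, and a nested-minimality argument yields the contradiction.
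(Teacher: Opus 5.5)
\textbf{Verdict: there is a gap in your proof of Item (4).} Your architecture is sound, and the reductions around (4) are correct. Item (2) follows from (4) as you say: two plaques of $L$ in one product chart would meet a single $\mathcal{G}$-plaque twice. Item (3) follows from Schoenflies on the one-point compactification plus the $\theta$-graph count. Item (5) follows from (4) and the product structure of $R$. (The paper gives no proof of its own; it treats the statement as classical and cites the literature, so the comparison is with the standard argument.)

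The gap is in (4), which is the real content of the proposition. Your main argument is an index computation for a line field. But $\mathcal{P}$ is only a topological plane and $\mathcal{F},\mathcal{G}$ are $C^0$ foliations, so there is no tangent line field, no rotation along $C$, and no smooth structure in which to smooth the corners. The same objection applies to the doubling/Poincar\'e--Hopf argument for (1). Your fallback uses the right tool, Poincar\'e--Bendixson, but then goes astray. You assert that the $\mathcal{G}$-leaf entering $D$ must exit through $[x,y]_F$, producing a smaller bigon, and you conclude with an unspecified ``nested-minimality argument''. A strictly decreasing sequence of bigons is not contradictory by itself; you would still need a limiting argument, which you do not supply.

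The missing idea is transverse orientation, which shows the smaller bigon never appears.
\begin{enumerate}
\item Since $\mathcal{P}$ is simply connected, $\mathcal{G}$ is orientable. By Whitney's theorem (as in the proof of Proposition~\ref{p.wplusisasimplyconnectedmanifold}), its oriented leaves are the orbits of a nonsingular continuous flow.
\item Cover the interior of $[x,y]_F$ by product charts that meet $\partial D$ only in the $\mathcal{F}$-plaque of their center. In such charts, the side of $D$ from which oriented $\mathcal{G}$-leaves cross $[x,y]_F$ is locally constant, hence constant along the arc. Say they all cross inward.
\item Choose $z$ in the interior of $[x,y]_F$ with $z\notin G$; the points of the arc lying on $G$ form a countable set, so such $z$ exists. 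This matters: your choice of $x,y$ only makes the two arcs meet at their endpoints. So $G$ may still cross the interior of $[x,y]_F$ and reach a reflex corner from inside $D$.
\item The forward half-leaf of $z$ enters $D$ and can never leave. It cannot recross $[x,y]_F$ outward, and it cannot reach $[x,y]_G$ or the corners, which lie on the different leaf $G$.
\item Being trapped in the compact disk $D$, this half-leaf accumulates on a periodic orbit by Poincar\'e--Bendixson for continuous planar flows. That periodic orbit is a closed leaf, contradicting (1).
\end{enumerate}
This replaces both the index computation and the nested argument.

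For (1) itself, replace Poincar\'e--Hopf as follows. A closed leaf of the Whitney flow of $\mathcal{F}$ bounds an invariant disk. Brouwer's theorem gives fixed points of the time-$2^{-m}$ maps, and any limit of them is a stationary point, contradicting non-singularity.

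Finally, your choice of $x,y$ needs one line of justification. The compact arcs $[x_0,y_0]_F$ and $[x_0,y_0]_G$ meet in a compact set that is discrete by product charts, hence finite. Two consecutive intersection points along $F$ then give arcs meeting only at their endpoints.
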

The above proposition is classical; a proof can be found, for instance, in Proposition~2.63 of \cite{monstre}. We conclude this section with the following definition, based on the above proposition. 

\begin{defi}
Let $(\mathcal{P},\mathcal{F},\mathcal{G})$ be a bifoliated plane.

\begin{itemize}
    \item For any $x\in \mathcal{P}$, the closure of a connected component of $\mathcal{P}-(\mathcal{F}(x)\cup \mathcal{G}(x))$ is called a \emph{quadrant} of $x$ in $(\mathcal{P},\mathcal{F},\mathcal{G})$. Moreover, a neighborhood of $x$ inside one of its quadrants $Q$ is called a \emph{germ of $Q$}.
    \item For any rectangle $R$ in $(\mathcal{P},\mathcal{F},\mathcal{G})$ and any $x\in R$, we call the $\mathcal{F}$-segment $\mathcal{F}(x)\cap R$ the \emph{$\mathcal{F}$-leaf} of $x$ in $R$. We similarly define the \emph{$\mathcal{G}$-leaf} of $x$ in $R$.

\end{itemize}
\end{defi}

\subsection{The bifoliated plane of an Anosov flow}

\begin{defi}\label{d.anosovflow}
  Consider $M$ a closed smooth $3$-manifold and $(X^t)_{t\in \mathbb{R}}$ a non-singular $C^0$-flow on $M$. We will say that $(X^t)_{t\in \mathbb{R}}$ is a \emph{topological Anosov} flow if, given any distance $d$ on $M$ induced by a Riemannian metric, we have that : 
  \begin{enumerate}
      \item There exists a pair of transverse codimension-one $C^{0,0}$-foliations $F^s,F^u$ (i.e., the holonomy and the leaves of $F^s$ and $F^u$ are $C^0$) such that $(X^t)_{t\in \mathbb{R}}$ preserves every leaf of $F^s$ and $F^u$.
      \item For every $x\in M,y\in F^s(x)$ (resp. $y\in F^u(x)$) there exists an increasing homeomorphism $h:\mathbb{R}\rightarrow \mathbb{R}$ such that $$d(X^t(x),X^{h(t)}(y))\underset{t\rightarrow +\infty}{\longrightarrow} 0~~ \big(\text{resp. }d(X^t(x),X^{h(t)}(y))\underset{t\rightarrow -\infty}{\longrightarrow}0\big) $$
      \item (Expansivity along the leaves) There exist $\eta, \epsilon>0$ such that for any $x\in M$, $y\in F^s(x)$, $z\in F^u(x)$ and any increasing  homeomorphism $h:\mathbb{R}\rightarrow \mathbb{R}$ with $h(0)=0$ $$ \forall t\in \mathbb{R}~ d(X^t(x),X^{h(t)}(y))<\eta \implies \exists |t_0|<\epsilon ~~ y=X^{t_0}(x)$$
      $$ \forall t\in \mathbb{R}~ d(X^t(x),X^{h(t)}(z))<\eta \implies \exists |t_0|<\epsilon ~~z=X^{t_0}(x)$$
  \end{enumerate}
We call $F^s$ and $F^u$ the \emph{stable} and \emph{unstable foliations} of $(X^t)_{t\in \mathbb{R}}$, respectively. 
\end{defi}

Consider $\Phi$ a topological Anosov flow on a smooth and closed $3$-manifold $M$. Denote by $F^s,F^u$ the stable and unstable foliations of $\Phi$, by $\widetilde{M}$ the universal cover of $M$ and by $\widetilde{\Phi},\widetilde{F^s},\widetilde{F^u}$ the lifts of the flow $\Phi$ and of the foliations $F^s,F^u$ on $\widetilde{M}$.

Thanks to the following two results, despite the dynamical complexity of $\Phi$ in $M$ (including infinitely many periodic orbits and orbits with complicated accumulation sets), $\widetilde{\Phi}$ is topologically conjugate to the constant-speed vertical flow on $\mathbb{R}^3$.

\begin{theorem}[C. Palmeira, \cite{Palmeira}]
    $\widetilde{M}$ is homeomorphic to $\mathbb{R}^3$.
\end{theorem}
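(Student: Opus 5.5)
The plan is to deduce the statement from the general theorem of Palmeira on foliations by planes: a simply connected open $n$-manifold ($n\ge 3$) carrying a codimension-one foliation all of whose leaves are properly embedded copies of $\mathbb{R}^{n-1}$ is homeomorphic to $\Sigma\times\mathbb{R}^{n-2}$, with $\Sigma$ a simply connected open surface, hence to $\mathbb{R}^n$. We will apply this to $\widetilde{M}$ equipped with the lifted stable foliation $\widetilde{F^s}$. The argument therefore reduces to checking that $\widetilde{M}$ is open and that every leaf of $\widetilde{F^s}$ is a properly embedded plane.

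\emph{Step 1: leaves of $F^s$ and absence of Reeb components.} Every leaf of $F^s$ is a union of orbits of $\Phi$. By Item~(2) of Definition~\ref{d.anosovflow}, these orbits are pairwise forward asymptotic. By expansivity (Item~(3)), the holonomy of a leaf along any loop is a topological contraction or expansion whenever the loop is homotopic in the leaf to a periodic orbit, and trivial otherwise. From this we get:
\begin{itemize}
\item no leaf is a sphere or a torus, since a torus leaf would be a closed leaf carrying a flow whose orbits are all forward asymptotic, which is impossible on a torus;
\item there is no Reeb component, since the boundary torus of such a component would be a compact leaf.
\end{itemize}
Hence $F^s$ is Reebless.

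\emph{Step 2: Novikov and Rosenberg.} By Novikov's theorem, leaves of a Reebless foliation are $\pi_1$-injective and transversals are not null-homotopic. Moreover, $\widetilde{M}$ is not compact: otherwise $\pi_1(M)$ would be finite, and a closed transversal (the flow orbits through a small disk eventually return) would be null-homotopic. It follows that each leaf of $\widetilde{F^s}$ is simply connected and noncompact, hence a plane. Each leaf is also properly embedded and meets each orbit of $\widetilde{\Phi}$ at most once. This last fact comes from Novikov again: a transversal meeting a leaf twice would produce a closed transversal in $\widetilde{M}$, and closed transversals are impossible in a simply connected manifold with a Reebless foliation. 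Rosenberg's theorem, or directly the fact that $\widetilde{M}$ is foliated by planes, also gives irreducibility, though we do not need it.

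\emph{Step 3: conclusion.} Palmeira's theorem applied to $(\widetilde{M},\widetilde{F^s})$ now gives $\widetilde{M}\cong\Sigma\times\mathbb{R}$ with $\Sigma$ a simply connected open surface, that is $\Sigma\cong\mathbb{R}^2$. Hence $\widetilde{M}\cong\mathbb{R}^3$.

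The main obstacle is Palmeira's theorem itself. The leaf space of $\widetilde{F^s}$ is a simply connected, possibly non-Hausdorff, $1$-manifold, so one cannot simply take a product. To handle this, I would exhaust the leaf space by an increasing union of Hausdorff sub-intervals glued at branchings. Over each such piece the foliation is a product $\mathbb{R}^2\times(\text{interval})$, which one checks using the transverse flow lines as the $\mathbb{R}$-direction. These product charts are then glued compatibly along the branch leaves, and the resulting increasing union of open $3$-balls is an open $3$-ball. If a self-contained argument is undesirable, one simply cites \cite{Palmeira}, as the paper does.
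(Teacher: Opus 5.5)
Your overall route is the standard argument behind the paper's bare citation of Palmeira: show $F^s$ is Reebless, use Novikov to see that $\widetilde{F^s}$ foliates $\widetilde{M}$ by properly embedded planes, then invoke Palmeira. However, several supporting claims are false as written.

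The first problem is that you treat the orbits of $\Phi$ as transverse to $F^s$. They are tangent to it. By Item~(1) of Definition~\ref{d.anosovflow}, every leaf of $F^s$ is $\Phi$-invariant, i.e.\ a union of orbits; this is exactly why $\widetilde{F^s}$ descends to a one-dimensional foliation of the orbit space. This has three consequences:
\begin{itemize}
\item Returning flow orbits do not produce a closed transversal.
\item The assertion that a leaf of $\widetilde{F^s}$ meets each orbit of $\widetilde{\Phi}$ at most once is false: a leaf either contains an orbit or misses it.
\item The product charts in your sketch of Palmeira's theorem cannot use flow lines as the transverse direction.
\end{itemize}
Each of these is repairable:
\begin{itemize}
\item Closed transversals exist for any codimension-one foliation of a closed manifold. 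Alternatively, if $\widetilde{M}$ were compact, Novikov applied to $\widetilde{F^s}$ would give a Reeb component, hence a compact leaf of $F^s$.
\item What Novikov actually gives in $\widetilde{M}$ is that every arc transverse to $\widetilde{F^s}$ meets each leaf at most once, and this yields properness.
\item A transverse direction can be taken from curves inside leaves of $\widetilde{F^u}$ that are transverse to the orbits.
\end{itemize}

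The second problem is that your exclusion of compact leaves rests on a false claim. A non-singular flow on $T^2$ can have all its orbits pairwise forward asymptotic. On $\mathbb{R}^2/\mathbb{Z}^2$, take $(\dot\theta,\dot\phi)=(1,\sin^2(\pi\phi))$. Every orbit tends to the semi-stable closed orbit $\{\phi=0\}$, and any two orbits are forward asymptotic after a time shift, so Item~(2) holds.

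What rules this example out is Item~(3). Let $x=(0,\phi_0)$, and let $y=(0,\phi_0')$, where $\phi_0'$ is the image of $\phi_0$ under the one-dimensional flow $\dot\phi=\sin^2(\pi\phi)$ at a small time $s\neq 0$. Then $y$ is not on the orbit of $x$, yet $d(X^t(x),X^t(y))\le |s|$ for all $t\in\mathbb{R}$.

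So Step~1 must genuinely use expansivity on a hypothetical compact leaf. It must also handle Klein bottle leaves. Poincar\'e--Hopf only reduces compact leaves to tori and Klein bottles, and $M$ need not be orientable nor $F^s$ transversely orientable. Since Novikov's theorem requires transverse orientability, you should pass to a double cover. Finally, $F^s$ is only $C^0$, so you need the $C^0$ version of Novikov's theorem (due to Solodov) rather than the classical one.
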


\begin{thdef}\label{thdef.bifoliatedplane}
    Let $\mathcal{P}$ be the space of orbits of $\widetilde{\Phi}$ endowed with the quotient topology. We have that 
    \begin{enumerate}
        \item $\mathcal{P}$ is homeomorphic to $\mathbb{R}^2$.
        \item The projections of $\widetilde{F^s}$ and $\widetilde{F^u}$ onto $\mathcal{P}$ define a pair of transverse line foliations $\mathcal{F}^s$ and $\mathcal{F}^u$.  
        \item Any action by deck transformations of $\pi_1(M)$ on $\widetilde{M}\simeq\mathbb{R}^3$ projects on $\mathcal{P}$ to an action by homeomorphisms preserving $\mathcal{F}^s$ and $\mathcal{F}^u$.
    \end{enumerate}
   We call $\mathcal{P}$ the bifoliated plane or the orbit space of $\Phi$ and $\mathcal{F}^s,\mathcal{F}^u$ the stable and unstable foliations of $\mathcal{P}$. 
\end{thdef}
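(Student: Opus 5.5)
The plan is to prove the three items in the order (2), (1), (3). First I describe $\mathcal{P}$ locally, then prove it is Hausdorff and simply connected, and finally handle the action. I use Palmeira's theorem ($\widetilde{M}\simeq\mathbb{R}^3$) together with the classical facts that $F^s$ and $F^u$ have no Reeb components and contain no compact leaves. These facts should follow from the expansivity and contraction properties in Definition~\ref{d.anosovflow}. By Novikov's theorem they imply that the leaves of $\widetilde{F^s}$ and $\widetilde{F^u}$ are properly embedded planes, and that no closed curve in $\widetilde{M}$ is transverse to $\widetilde{F^s}$ or $\widetilde{F^u}$.

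\textbf{Step 1: local structure.} Around any point of $M$ take a flow box trivializing $\Phi$, $F^s$ and $F^u$ simultaneously, which I write as $D\times(-\epsilon,\epsilon)$ with $D$ a small disk bifoliated by the traces of $F^s$ and $F^u$. Lift it to $\widetilde{M}$. The key claim is that every orbit of $\widetilde{\Phi}$ meets every leaf of $\widetilde{F^s}$ (resp. $\widetilde{F^u}$) in at most one connected set, and meets each lifted disk $D$ at most once. I would argue by contradiction: if an orbit returned to the same leaf or the same disk, closing it up with a short arc would give a closed curve in $\widetilde{M}$ transverse to one of the lifted foliations, which is impossible. In the same way one gets that a leaf of $\widetilde{F^s}$ and a leaf of $\widetilde{F^u}$ intersect in at most one orbit. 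Consequently the projection $\pi:\widetilde{M}\to\mathcal{P}$ is open and restricts to a homeomorphism from each lifted disk onto its image. So $\mathcal{P}$ is a (possibly non-Hausdorff) $2$-manifold, the projections of $\widetilde{F^s}$ and $\widetilde{F^u}$ are transverse one-dimensional foliations of it, and this gives~(2) once~(1) is proved.

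\textbf{Step 2: Hausdorffness, the main obstacle.} Suppose two distinct orbits $O_1,O_2$ cannot be separated, so that some sequence of orbits $O^n$ converges to both. Since each leaf of $\widetilde{F^s}$ is a plane foliated by flow lines that are asymptotic in forward time, the orbit space of a single stable leaf is a Hausdorff line $\mathbb{R}$; the same holds for unstable leaves. Hence $O_1$ and $O_2$ cannot lie in a common stable or unstable leaf. Now use the local product structure near $O_1$ and near $O_2$. For large $n$, the unstable leaf of $O^n$ meets the stable leaf of $O_1$, and the stable leaf of $O^n$ meets the unstable leaf of $O_2$. Following these intersections, I expect to produce either a stable leaf meeting an unstable leaf twice, or a closed curve transverse to one of the foliations, contradicting Step 1. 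Carrying out this argument cleanly is the step I expect to be hardest; I would follow Fenley's argument for non-separated leaves.

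\textbf{Step 3: topology of $\mathcal{P}$ and the action.} Once $\mathcal{P}$ is Hausdorff, Step 1 shows that $\pi:\mathbb{R}^3\to\mathcal{P}$ is a locally trivial fibration with fibre $\mathbb{R}$, using the lifted disks as local sections. The long exact sequence of homotopy groups then gives $\pi_1(\mathcal{P})=0$, and $\mathcal{P}$ is non-compact. A simply connected non-compact surface is homeomorphic to $\mathbb{R}^2$, which proves~(1). For~(3), deck transformations commute with $\widetilde{\Phi}$ and preserve $\widetilde{F^s}$ and $\widetilde{F^u}$, so they permute orbits and leaves. Since $\pi$ is open, the induced maps on $\mathcal{P}$ are homeomorphisms preserving $\mathcal{F}^s$ and $\mathcal{F}^u$, and composition is respected, so they define an action.
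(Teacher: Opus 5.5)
\textbf{Gap: Hausdorffness of $\mathcal{P}$ is not proved.} Your outline is the classical Barbot--Fenley argument, which is all the paper relies on here: it gives no proof of its own and cites Barbot and Fenley. Step~3 is fine. The non-compactness you assert holds because the fibres are contractible, so $\mathcal{P}$ is weakly homotopy equivalent to $\mathbb{R}^3$ and cannot be $S^2$.

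The problem is Step~2. Hausdorffness of $\mathcal{P}$ is the real content of item~(1), and you announce it but do not prove it. The one inference you do draw there (``hence $O_1,O_2$ cannot lie in a common leaf'') does not follow from what you cite. Hausdorffness of the orbit space of a single leaf $L$, in its own leaf topology, says nothing about orbits \emph{outside} $L$ converging in $\mathcal{P}$ to two orbits of $L$.

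What is missing is a short argument using only two facts:
(A) every leaf of $\mathcal{F}^s$ and of $\mathcal{F}^u$ is Hausdorff in its leaf topology;
(B) a leaf of $\mathcal{F}^s$ meets a leaf of $\mathcal{F}^u$ at most once.

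Let $p_n\to p$ and $p_n\to q$. Take product charts $\phi_p:U_p\to(-1,1)^2$ and $\phi_q:U_q\to(-1,1)^2$ centred at $p$ and $q$ (coming from your lifted disks), with $\mathcal{F}^s$ horizontal and $\mathcal{F}^u$ vertical. Write $\phi_p(p_n)=(a_n,b_n)$ and $\phi_q(p_n)=(a_n',b_n')$.
\begin{enumerate}
\item For $n,m$ large, the points $\phi_p^{-1}(a_n,b_m)$ and $\phi_q^{-1}(a_n',b_m')$ both lie in $\mathcal{F}^u(p_n)\cap\mathcal{F}^s(p_m)$. By (B) they coincide.
\item Fix $n$ and let $m\to\infty$. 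This single sequence of points of $\mathcal{F}^u(p_n)$ moves inside plaques, so it converges in the leaf topology. Its limit is $\phi_p^{-1}(a_n,0)\in\mathcal{F}^s(p)$ and also $\phi_q^{-1}(a_n',0)\in\mathcal{F}^s(q)$.
\item By (A) these two limits are equal, so $\mathcal{F}^s(p)=\mathcal{F}^s(q)$. Symmetrically $\mathcal{F}^u(p)=\mathcal{F}^u(q)$, and (B) then gives $p=q$.
\end{enumerate}

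\textbf{Step~1 also needs repair, and the repair fixes the order of the proof.} A closed curve made of an orbit segment and a short arc is never transverse to $\widetilde{F^s}$ or $\widetilde{F^u}$, because orbits are tangent to both foliations. The correct reduction is as follows. Two distinct points of one orbit lying in a lifted disk $D$ lie on distinct plaques of, say, $\widetilde{F^s}$ in $D$. Sliding along an $\widetilde{F^u}$-plaque of $D$ then gives an arc transverse to $\widetilde{F^s}$ with both endpoints on a single leaf, which Novikov's theorem forbids in $\widetilde{M}$.

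Proving (B) ``in the same way'' has the same issue. You need an arc in $L^s$, transverse to the flow, joining the two orbits of $L^s\cap L^u$. Such an arc is available only once you know that the orbit space of $L^s$ is a line. So (A) must be proved first, before (B), and with an actual argument rather than the phrase ``asymptotic in forward time''. Using the contraction property of Definition~\ref{d.anosovflow}, you must show that any two orbits of $L^s$ are joined by an arc of $L^s$ transverse to the flow. Such an arc, slightly extended, meets each orbit of the plane $L^s$ at most once, so orbits it joins cannot be non-separated; this gives (A).
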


Theorem-Definition~\ref{thdef.bifoliatedplane} was proved independently for $C^1$ Anosov flows by T.~Barbot and S.~Fenley in \cite{Barbotthese} and \cite{Fe}, respectively. The proofs of Barbot and Fenley extend verbatim to the setting of topological Anosov flows (see for instance \cite{planeapproach} or \cite{monstre}).

Recall that $\pi_1(M)$ admits infinitely many actions on $\mathbb{R}^3$ by deck transformations, any two of which differ by an inner automorphism of $\pi_1(M)$. Consequently, Item (3) of the previous Theorem-Definition defines infinitely many actions of $\pi_1(M)$ on $\mathcal{P}$ by homeomorphisms, all equivalent up to an inner automorphism of $\pi_1(M)$. By abuse of language, throughout this paper we will refer to any of these actions as \textbf{the} action of $\pi_1(M)$ on the bifoliated plane of $\Phi$.

\subsection{Markov partitions of Anosov flows}
Following our previous notations, we now define the notion of a \emph{Markov partition} for the topological Anosov flow $\Phi$. As in Definition~\ref{d.rectangle}, any segment contained in a leaf of $F^s$ (resp. $F^u$) will be called an $F^s$-\emph{segment} (resp. an $F^u$-\emph{segment}).

\begin{defi}\label{d.rectangleM}
Consider $i:\mathbb{D}\rightarrow M$ an embedding of the closed disk in $M$ such that $R:=i(\mathbb{D})$ is transverse to $\Phi$ and also $i(\partial\mathbb{D})$ is the union of two $F^s$-segments and two $F^u$-segments. If $R$ is trivially bifoliated by the restrictions of $F^s$ and $F^u$ on $R$, then we will call $R$ \emph{a transverse rectangle of $\Phi$}. 

We call $i(\operatorname{Int}(\mathbb{D}))$ (resp. $i(\partial\mathbb{D})$) the \emph{interior} (resp. \emph{boundary}) of $R$, and denote it by $\operatorname{Int}(R)$ (resp. $\partial R$). Furthermore, we denote by $\partial^sR$ (resp. $\partial^uR$) the union of the two $F^s$-segments (resp. $F^u$-segments) contained in $\partial R$, and call them the \emph{stable boundary} (resp. \emph{unstable boundary}) of $R$.

Finally, any subrectangle of $R$, say $R'$, for which $\partial^s R'\subset \partial^s R$ will be called a \emph{vertical subrectangle} of $R$. Similarly, any subrectangle $R'$ of $R$ for which $\partial^u R'\subset \partial^u R$ will be called a \emph{horizontal subrectangle} of $R$.
\end{defi}

\begin{defi}\label{d.markovpartition}
A \emph{Markov partition} of $\Phi$ is a finite family of transverse rectangles of $\Phi$, say $R_1,...,R_n$, such that: 
\begin{enumerate}
\item The interiors of the rectangles are pairwise disjoint.

\vspace{0.1cm}
\item There exists $T>0$ such that every orbit segment of $\Phi$ of length $T$ intersects $\underset{i=1}{\overset{n}{\cup}}R_i$.
\item For every $i,j \in \llbracket 1, n \rrbracket $ the strictly positive (resp. negative) orbit by $\Phi$ of the stable (resp. unstable) boundary of $R_i$ does not intersect $\mathrm{Int}(R_j)$.
\end{enumerate}
Furthermore, we will call the family $R_1,...,R_n$ a \emph{reduced} Markov partition if in addition to the previous axioms it also satisfies the following: 

\begin{enumerate}\setcounter{enumi}{3}
    \item For every $i\neq j$, there does not exist a continuous function $\tau: R_i \rightarrow \mathbb{R}$ such that $\Phi^{\tau}(R_i)\subseteq R_j$. 
\end{enumerate}
\end{defi}

By Item~(2) of the above definition, every Markov partition $R_1,...,R_n$ of $\Phi$ admits a first return map $$f:\underset{i\in \llbracket 1,n\rrbracket}{\cup}R_i\rightarrow \underset{i\in \llbracket 1,n\rrbracket}{\cup}R_i$$
Moreover, Item (3) implies that for every $i,j\in \llbracket 1,n\rrbracket$, the closure of every connected component of $f(\mathrm{Int}(R_i))\cap \mathrm{Int}(R_j)$ (resp. $f^{-1}(\mathrm{Int}(R_i))\cap \mathrm{Int}(R_j)$) is a vertical (resp. horizontal) subrectangle of $R_j$. These two properties will be at the heart of the definition of strong Markovian families introduced later.
 
The following two results ensure the existence of Markov partitions and reduced Markov partitions for all Anosov flows.  
\begin{theorem} \label{t.existenceofmarkovpartitions}
Let $\Phi$ be any topological Anosov flow on a closed 3-manifold $M$ and $F^s,F^u$ its stable and unstable foliations. For any finite set of periodic orbits $\Gamma$ of $\Phi$ such that $\underset{\gamma\in\Gamma}{\cup}F^s(\gamma)$ and  $\underset{\gamma\in\Gamma}{\cup}F^u(\gamma)$ are dense in $M$, there exists a Markov partition of $\Phi$ formed by rectangles, whose stable and unstable boundaries are contained respectively in $\underset{\gamma\in\Gamma}{\cup}F^s(\gamma)$ and  $\underset{\gamma\in\Gamma}{\cup}F^u(\gamma)$.
\end{theorem}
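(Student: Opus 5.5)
We would prove Theorem~\ref{t.existenceofmarkovpartitions} with the classical Bowen/Ratner construction, carried out in the topological setting through the orbit space. \textbf{First}, choose a finite family of small transverse rectangles $D_1,\dots,D_m$ whose interiors meet every orbit segment of some fixed length $T_0$. Compactness of $M$ and the local product structure given by $F^s$, $F^u$ (Definition~\ref{d.anosovflow}) provide such a family. Density of $\cup_{\gamma}F^s(\gamma)$ and $\cup_\gamma F^u(\gamma)$ lets us shrink and adjust each $D_k$ so that $\partial^s D_k\subset \cup_\gamma F^s(\gamma)$ and $\partial^u D_k\subset\cup_\gamma F^u(\gamma)$. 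These invariant sets are unions of leaves saturated by the flow, so every segment of $F^s(\gamma)$ or $F^u(\gamma)$ through a point of a transversal can be used as a boundary segment. We may also assume the $D_k$ have pairwise disjoint projections along short flow segments. This can be arranged by pushing them along the flow, since the interiors can be made disjoint and the cover by flow-boxes still works.

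\textbf{Second}, refine the family to obtain the Markov property (Item~(3) of Definition~\ref{d.markovpartition}). Let $f$ be the first return map to $\cup D_k$. Cut each $D_k$ along the images under $f$ and $f^{-1}$ of the stable and unstable boundaries of all the $D_j$.

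The key point is \emph{finiteness}. The stable boundaries lie on finitely many leaves $F^s(\gamma)$. Their forward images under the flow stay in these same leaves and contract toward the periodic orbits $\gamma$. Consequently, the set $S=\bigcup_{t\ge 0}\Phi^t(\cup_k\partial^sD_k)\cap \cup_k D_k$ is a closed union of finitely many $F^s$-segments. The unstable set $U=\bigcup_{t\le0}\Phi^t(\cup_k\partial^u D_k)\cap\cup_kD_k$ is treated symmetrically.

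Closedness and finiteness need care. We would first enlarge the boundaries to be \emph{$f$-invariant}: replace $\partial^s D_k$ by stable segments through the points of $\gamma\cap D_k$ of length chosen so that $f$ maps them into themselves. This is possible because $f$ contracts stable segments on periodic leaves in the topological sense, by Item~(2) of Definition~\ref{d.anosovflow}.

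The complementary components of $S\cup U$ in each $D_k$ are then rectangles. Their closures $R_1,\dots,R_n$ have disjoint interiors, so Item~(1) holds. Item~(2) is inherited from the cover. Item~(3) holds because $\partial^s R_i\subset S$ and $f(S)\subset S$, and symmetrically $f^{-1}(U)\subset U$.

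\textbf{The main obstacle} is the second step in the purely topological setting. We must show that $S$ meets each $D_k$ in finitely many segments, each of which crosses $D_k$ fully or ends on $U$. This is what guarantees that the pieces are genuine trivially bifoliated rectangles rather than regions with slits.

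To handle it, we would work in the bifoliated plane $\mathcal P$ of Theorem-Definition~\ref{thdef.bifoliatedplane}, using Proposition~\ref{p.propertiesoffoliformarkovianactions}. In $\mathcal P$, leaves meet at most once, so a stable segment entering a lifted rectangle crosses it along a single segment. Expansivity (Item~(3) of Definition~\ref{d.anosovflow}) bounds the number of returns before the forward images become shorter than the width of the rectangles. We expect this counting argument, replacing uniform hyperbolic estimates by expansivity and compactness, to be the delicate part.
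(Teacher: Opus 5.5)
\paragraph{Where your argument fails.} The paper does not prove this statement; it quotes it from Ratner \cite{Ratner} for transitive flows and from \cite{markovpseudoanosov} in general. Your argument therefore has to stand on its own, and it has a genuine gap at exactly the step you flag as the main obstacle.

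Cutting the disks $D_k$ along $S\cup U$ does not produce rectangles in general. A component of $S\cap D_k$ is a piece of $\Phi^t(\partial^s D_j)\cap D_k$ for some $t\ge 0$. Besides ending on $\partial^u D_k$, it can end at an interior point $q=\Phi^t(c)$, where $c$ is a corner of $D_j$. Such a $q$ lies on a \emph{forward} image of $\partial^u D_j$, whereas $U$ consists of \emph{backward} images of unstable boundaries. So nothing puts $q$ on $U$; generically it is not there, and the cut leaves a slit.

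The same happens at the free ends of the segments through the points of $\gamma\cap D_k$ that you must add to make $S$ closed. Without them, the forward images of a segment of $F^s(\gamma)$ not containing a point of $\gamma$ accumulate on $\gamma\cap D_k$ as infinitely many segments.

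The expansivity count you propose in $\mathcal{P}$ controls how many segments there are, not where they end. In fact, the forward images that have become shorter than the width of the $D_k$ are exactly the ones that do not cross $D_k$, and so they end in its interior.

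Two further problems with the step ``replace $\partial^s D_k$ by stable segments through the points of $\gamma\cap D_k$'':
\begin{itemize}
\item It is not available for a fine cover. Most $D_k$ meet no orbit of $\Gamma$, and when $\gamma$ does meet $D_k$, it meets the interior, not $\partial^s D_k$.
\item With only topological contraction, such segments are mapped into themselves only by the iterate of $f$ that returns to $p$, not by $f$ itself. So the lengths must be chosen coherently along the whole $f$-orbit of $p$.
\end{itemize}

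\paragraph{What is missing.} You need a mechanism that forces every free endpoint of one family onto the other family while keeping $f(S)\subset S$ and $f^{-1}(U)\subset U$. The corners of an arbitrary initial cover are precisely what prevents this. Two standard routes exist:
\begin{itemize}
\item Start from rectangles that already satisfy a weak Markov property, as in Bowen's shadowing construction, and follow with a finite refinement. You would then still need a separate argument to place the boundaries in $\cup_\gamma F^s(\gamma)$ and $\cup_\gamma F^u(\gamma)$.
\item Follow Fathi--Laudenbach--Po\'enaru. Choose the cut graph first, made of stable and unstable segments emanating from periodic points; these can be made invariant because they start at periodic points. Extend every end until it first meets the transverse family, using the density hypothesis. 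Then prove that the complementary regions are rectangles.
\end{itemize}
The second route essentially works on a closed global cross-section. For a flow without one, the boundaries of the disks $D_k$ reintroduce free ends. That is the real difficulty in the flow setting, and your proposal does not yet contain an idea that handles it.
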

The above theorem was originally shown by M. Ratner in the case of transitive Anosov flows (see \cite{Ratner}) and was later generalized by the second author in \cite{markovpseudoanosov}. The existence of infinitely many sets of periodic orbits satisfying the density hypothesis of the above theorem was also shown in \cite{markovpseudoanosov}. 

\begin{prop}\label{t.reducedmarkovpartitionexists}
Let $\Phi$ be a topological Anosov flow on a closed $3$-manifold $M$. Every Markov partition $\mathcal{M}$ of $\Phi$ contains a subcollection of rectangles that forms a reduced Markov partition of $\Phi$.
\end{prop}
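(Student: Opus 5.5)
The natural approach is a finite removal procedure. Starting from a Markov partition $\mathcal{M}=\{R_1,\dots,R_n\}$ of $\Phi$, I remove rectangles one at a time for as long as condition (4) of Definition~\ref{d.markovpartition} fails, and at each step check that conditions (1)--(3) still hold. Because $\mathcal{M}$ is finite, the procedure stops after at most $n-1$ removals, and the family that remains is then a reduced Markov partition.

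Concretely, assume some pair $i\neq j$ admits a continuous function $\tau:R_i\to\mathbb{R}$ with $\Phi^\tau(R_i)\subseteq R_j$. I set $\mathcal{M}'=\mathcal{M}-\{R_i\}$. Condition (1), disjointness of interiors, is inherited by any subfamily. For condition (2), let $\|\tau\|_\infty=C$, which is finite since $R_i$ is compact. Any orbit segment of length $T+2C$ contains a subsegment of length $T$, and that subsegment meets $\bigcup_k R_k$ at some point $x$. If $x$ lies in some $R_k$ with $k\neq i$ we are done. If $x\in R_i$, then $\Phi^{\tau(x)}(x)\in R_j$, and this point is still on the segment because of the margin $C$ at both ends. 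So $\mathcal{M}'$ satisfies (2) with constant $T'=T+2C$.

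Condition (3) concerns only the boundaries and interiors of rectangles that remain in the family. Each condition it imposes on $\mathcal{M}'$ is one of the conditions already imposed on $\mathcal{M}$, so it is inherited by any subfamily. This is the key observation: none of the axioms (1)--(3) involves a covering or completeness requirement beyond (2), and (2) is the only one that requires an argument.

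One point needs care: the new pair of rectangles that witnesses a failure of (4) in $\mathcal{M}'$ must consist of two distinct rectangles of $\mathcal{M}'$. This is automatic, because $\mathcal{M}'\subset\mathcal{M}$. Since $\mathcal{M}$ has finitely many rectangles, after finitely many steps no pair $i\neq j$ admits such a function $\tau$. The family obtained at that point is nonempty, because each removal keeps $R_j$, and it satisfies (1)--(4).

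The main obstacle is the verification of (2). The continuity of $\tau$, together with the compactness of $R_i$, is exactly what provides the uniform bound $C$ and hence the new constant $T'$. If one also required the return map to remain well behaved, for instance that the flow time of first return stays bounded below, this would follow from transversality and the disjointness of interiors, which are unaffected by removing rectangles.
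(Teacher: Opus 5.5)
Your proof is correct. The paper gives no argument of its own here: it cites Theorem~2.5 of \cite{preprint} and only remarks that transitivity is not used there. Your greedy removal argument is therefore a self-contained substitute, and it makes that remark evident. With Definition~\ref{d.markovpartition} as stated, (1) and (3) are pairwise conditions on members of the family, so they pass to any subfamily. Condition (2) survives the removal of $R_i$ with constant $T+2\max_{R_i}|\tau|$, finite by compactness of $R_i$ and continuity of $\tau$, using the middle subsegment as you do. Finiteness of $\mathcal{M}$ gives termination, and none of this involves transitivity.

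Your closing remark on first-return times is not needed for anything in the definition, and it is not right as stated. Consider two transverse rectangles with disjoint interiors that share an $F^u$-boundary component, one lying slightly above the other along the flow near it. This configuration gives arbitrarily short returns, so simply drop that remark.
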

The above proposition was established for transitive Anosov flows in Theorem 2.5 of \cite{preprint}. However, the proof does not use the transitivity hypothesis and therefore remains valid for any Anosov flow in dimension $3$.

\subsection{Strong Markovian families in the plane}
Consider $\mathcal{P}$ an oriented plane endowed with a pair $(\mathcal{F}^s,\mathcal{F}^u)$ of transverse foliations. Denote by $\rho:G \rightarrow \text{Homeo}(\mathcal{P})$ a faithful and orientation preserving $C^0$ action of a countable group $G$ on the plane preserving both $ \mathcal{F}^s$ and $\mathcal{F}^u$. 

In this section, we give a formal definition of strong Markovian families of $\rho$ and state several of their properties. Similarly to Definition \ref{d.rectangleM},

\begin{defi}\label{d.subrectangleplane}
Consider $R$ a rectangle in $(\mathcal{P},\mathcal{F}^s,\mathcal{F}^u)$. Denote by $\partial^sR$ (resp. $\partial^uR$) the union of the two $\mathcal{F}^s$-segments (resp. $\mathcal{F}^u$-segments) in $\partial R$. We call $\partial^sR$ (resp. $\partial^uR$) the  \emph{$\mathcal{F}^s$-boundary} (resp. $\mathcal{F}^u$-boundary) of $R$. Any connected component of $\partial^sR$  (resp. $\partial ^uR$) will be called an \emph{$\mathcal{F}^s$-boundary} (resp. \emph{$\mathcal{F}^u$-boundary}) \emph{component} of $R$.

Moreover, any subrectangle $R'$ of $R$ such that $\partial^sR'\subset \partial^sR $ will be called a \emph{vertical subrectangle} of $R$. If furthermore $R' \neq R$, then we will call $R'$ a \emph{non-trivial} vertical subrectangle of $R$. We define similarly \emph{horizontal subrectangles} and \emph{non-trivial horizontal subrectangles}.

\end{defi}

\begin{defi}\label{d.markovfamily}
A \emph{strong Markovian family of $\rho$} inside  $(\mathcal{P},\mathcal{F}^s,\mathcal{F}^u)$ is a $\rho$-invariant set of  rectangles $\mathcal{R}=(R_i)_{i \in I}$ covering $\mathcal{P}$ (i.e. $\cup_{i \in I} R_i = \mathcal{P}$) such that 
\begin{enumerate}
\item (Finiteness axiom) $\mathcal{R}$ consists of finitely many $\rho$-orbits of rectangles. 

\vspace{0.1cm}
\item (Markovian intersection axiom) For every two distinct rectangles $R_i,R_j\in\mathcal{R}$, if $\mathrm{Int}(R_i) \cap \mathrm{Int}(R_j) \neq \emptyset$, then, up to changing the roles of $R_i$ and $R_j$, $R_i \cap R_j$ is a non-trivial horizontal subrectangle of $R_i$ and a non-trivial vertical subrectangle of $R_j$.
\vspace{0.1cm}
\item (Strong finite return time axiom)  For any point $x\in \mathcal{P}$ and any sufficiently small germ $U$ of a quadrant of $x$ there exists $R, R_h, R_v\in \mathcal{R}$ such that $R_h\cap R$ is a non-trivial horizontal subrectangle of $R$, $R_v\cap R$ is a non-trivial vertical subrectangle of $R$ and $U\subset R\cap R_h\cap R_v$. 
\vspace{0.1cm}
\item(Expansivity axiom) If $(R_i)_{i\in \mathbb{N}}$ is a sequence of rectangles in $\mathcal{R}$ such that $R_{i+1}\cap R_i$ is a non-trivial vertical (resp. horizontal) subrectangle of $R_i$ for every $i\in \mathbb{N}$, then $\underset{i\geq 0 }{\cap}R_i$ is an $\mathcal{F}^u$-leaf (resp. $\mathcal{F}^s$-leaf) of $R_0$.
\end{enumerate}
\end{defi}

\begin{figure}
    \centering
    \includegraphics[scale=0.32]{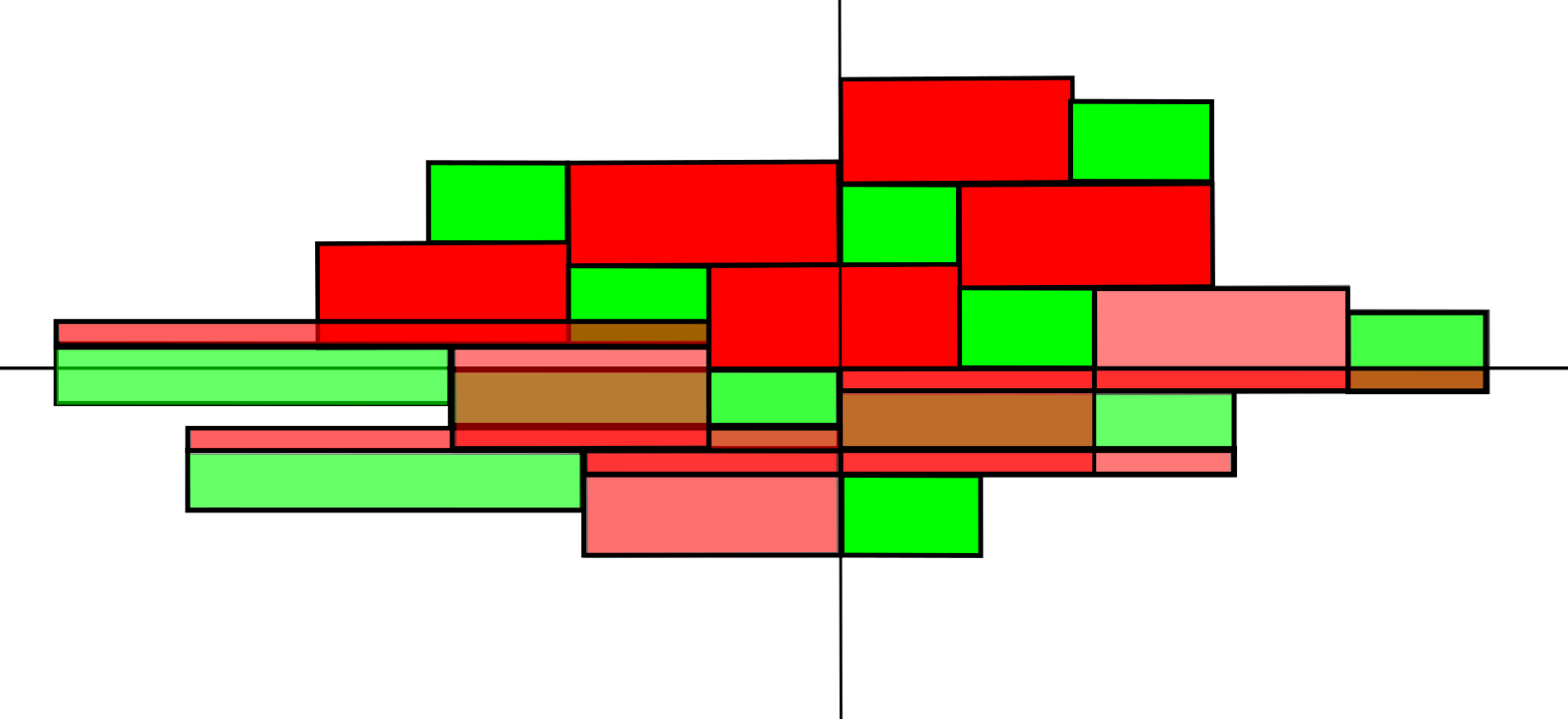}
    \caption{A strong Markovian family in $\mathcal{P}$}
    \label{f.markovianfamily}
\end{figure}

Strong Markovian families were originally introduced in \cite{monstre} as analogues of Markov partitions for group actions on the plane. Similarly to Markov partitions, these families of rectangles provide a combinatorial description of a strong Markovian action (see \cite{monstre}). Furthermore, they play an important role in the problem of classification of Anosov flows in dimension 3, as by Proposition 3.15 and Remark 3.30 of \cite{monstre},

\begin{prop}\label{p.anosovadmitsmarkovfamily}
    If $\Phi$ is a topological Anosov flow on a closed 3-manifold $M^3$, then the action of $\pi_1(M)$ on the bifoliated plane of $\Phi$ preserves a strong Markovian family.
\end{prop}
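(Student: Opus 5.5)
The plan is to start from a Markov partition of $\Phi$, lift it to $\widetilde{M}$, and project the lifts to the orbit space. Concretely, Theorem~\ref{t.existenceofmarkovpartitions} gives a Markov partition $\mathcal{M}=\{R_1,\dots,R_n\}$ of $\Phi$, for any finite set of periodic orbits with dense stable and unstable leaves; such sets exist by \cite{markovpseudoanosov}. Let $\widetilde{\mathcal{M}}$ be the family of all lifts of the $R_k$ to $\widetilde{M}$, and let $\mathcal{R}$ be the family of their projections to the orbit space $\mathcal{P}$.

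Since $\widetilde{\Phi}$ is conjugate to the vertical flow on $\mathbb{R}^3$, each lift $\widetilde R$ is a disk transverse to $\widetilde\Phi$ meeting each orbit at most once. Hence the projection $\pi:\widetilde M\to\mathcal{P}$ restricted to $\widetilde R$ is a homeomorphism onto its image. It sends $\widetilde{F}^{s,u}$-segments to $\mathcal{F}^{s,u}$-segments, so $\pi(\widetilde R)$ is a rectangle. The family $\mathcal{R}$ is $\rho_\Phi$-invariant because deck transformations permute lifts, and it consists of at most $n$ orbits, which is the finiteness axiom. It covers $\mathcal{P}$ by Item~(2) of Definition~\ref{d.markovpartition}, since every orbit of $\widetilde\Phi$ meets some lift.

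For the Markovian intersection axiom, take two distinct rectangles $\pi(\widetilde R),\pi(\widetilde R')$ whose interiors meet. Some orbits then cross both $\widetilde R$ and $\widetilde R'$. Disjointness of interiors (Item~(1)) gives a well-defined order along these orbits, so say $\widetilde R'$ comes later. Following the flow from $\widetilde R$ to $\widetilde R'$ corresponds to a composition of return maps. Item~(3) of Definition~\ref{d.markovpartition} implies that the set of points of $\widetilde R$ reaching $\widetilde R'$ is cut out by the stable boundary of $\widetilde R'$ flowed backward and the unstable boundary of $\widetilde R$ flowed forward. These curves never enter the interiors, so the overlap is a horizontal subrectangle of one rectangle and a vertical subrectangle of the other. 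One must still check that the overlap is connected and non-trivial. Non-triviality uses distinctness, and if the overlap were the whole rectangle one would get a contradiction with disjoint interiors and the transversality of the rectangles.

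The strong finite return time axiom comes from uniform return time $T$. For a small germ of a quadrant of $x$, flow forward and backward along the orbit of a lift until the rectangles containing that germ have boundaries far from it. This uses the contraction and expansion along leaves, which follow from the topological Anosov properties. Expansivity follows from Item~(3) of Definition~\ref{d.anosovflow}. A nested sequence of rectangles, each a non-trivial vertical subrectangle of the previous one, corresponds to an orbit itinerary. Two points sharing this itinerary on the same $\mathcal{F}^s$-leaf would stay $\eta$-close forever in the future, so they coincide, and the intersection is a single $\mathcal{F}^u$-leaf.

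\textbf{Main obstacle.} I expect the main difficulty to be the intersection axiom: making rigorous that overlaps of projected rectangles are exactly the horizontal and vertical subrectangles, including degenerate cases. The delicate cases are where lifts hit each other's boundary, or where one orbit passes through several lifts of the same $R_k$. I would handle these by working with the first-return map on the lift and arguing by induction on the number of intermediate rectangles crossed. This gives the strong Markovian family required by Proposition~\ref{p.anosovadmitsmarkovfamily}.
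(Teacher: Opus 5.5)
Your lift-and-project strategy is the natural one; the paper does not reprove this proposition but quotes it from the second author's earlier work. However, the Markovian intersection axiom fails for an arbitrary Markov partition. Your justification of non-triviality by ``disjoint interiors and transversality'' is wrong, because Items (1)--(3) of Definition~\ref{d.markovpartition} allow one rectangle to be a flow-translate of a proper subrectangle of another.

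For instance, let $H$ be a proper horizontal subrectangle of $R_1$ whose first-return image is a vertical subrectangle $V$ of $R_2$. Add to the partition $R_{n+1}:=\{\Phi^{\tau(x)/2}(x)\mid x\in H\}$, where $\tau$ is the return time from $H$ to $V$. The interior of $R_{n+1}$ lies on orbit segments meeting no other rectangle. Its stable boundary flows forward onto $\partial^s V\subset\partial^s R_2$, and its unstable boundary flows backward onto $\partial^u H\subset\partial^u R_1$. One checks that Items (1)--(3) still hold, and $R_{n+1}$ is transverse to $\Phi$ and even disjoint from the other rectangles. Yet the projection of a lift of $R_{n+1}$ is strictly contained in the projection of a lift of $R_1$, so Item (2) of Definition~\ref{d.markovfamily} fails.

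Ruling this out is exactly the role of Item (4), so you must first pass to a reduced partition via Proposition~\ref{t.reducedmarkovpartitionexists}. Non-triviality then goes as follows. Suppose $\pi(\widetilde{R})\subseteq\pi(\widetilde{R}')$ for distinct lifts. The flow time along the fibers over $\pi(\widetilde{R})$ is a continuous $\tau$ with $\widetilde\Phi^{\tau}(\widetilde{R})\subseteq\widetilde{R}'$, and it descends to $M$.
\begin{itemize}
\item If the two lifts cover different rectangles of the partition, this contradicts Item (4).
\item If $\widetilde{R}'=\gamma\widetilde{R}$ with $\gamma$ non-trivial, then $\rho_\Phi(\gamma)^{-2}$ maps the compact rectangle $\pi(\widetilde{R})$ into itself, so it fixes a point $p$ there. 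It topologically expands one of $\mathcal{F}^s(p)$, $\mathcal{F}^u(p)$, which is incompatible with mapping the rectangle into itself.
\end{itemize}
This also shows that distinct lifts give distinct rectangles, which you use implicitly.

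The expansivity argument also has a gap. A sequence with $R_{i+1}\cap R_i$ a non-trivial vertical subrectangle of $R_i$ is a chain of predecessors, i.e.\ a backward itinerary. Points of one $\mathcal{F}^s$-leaf are automatically asymptotic in the future, so what must be controlled is the past, not the future. More importantly, sharing an itinerary only keeps the two orbits within the diameter of the rectangles, up to a flow-time correction. Theorem~\ref{t.existenceofmarkovpartitions} gives no bound on that diameter in terms of the constant $\eta$ of Item (3) of Definition~\ref{d.anosovflow}. You need one of the following:
\begin{itemize}
\item a Markov partition with rectangles small relative to $\eta$, noting that the two points then also share their forward itinerary (first returns send stable segments into stable segments), so closeness holds for all $t\in\mathbb{R}$ as Item (3) requires; or
\item an independent proof that two distinct orbits in a leaf of $\widetilde{F^s}$ cannot stay at bounded distance in the past.
\end{itemize}

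By contrast, the strong finite return axiom needs no contraction or expansion. By local finiteness of the lifts and Item (2) of Definition~\ref{d.markovpartition}, a small germ of the quadrant lies in the projection of some lift. The next and previous lifts along the flow whose projections contain it give $R_h$ and $R_v$, with non-triviality again coming from reducedness.

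A minor point: injectivity of $\pi$ on a lift does not follow from the conjugacy with the vertical flow alone. Use instead that a stable and an unstable leaf of $\widetilde{M}$ share at most one orbit, and that a stable segment of a lift projects locally injectively, hence injectively, into a leaf of $\mathcal{F}^s$.
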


Fix for the rest of this section $\mathcal{R}$ a strong Markovian family of $\rho$. Even though each rectangle in $\mathcal{R}$ intersects infinitely many others, it is nevertheless possible to partially order the rectangles of a strong Markovian family, thanks to the following result. This allows us to define the notion of predecessor (resp. successor) of first generation of a rectangle $R\in \mathcal{R}$, which can be intuitively thought as one of the first rectangles in $\mathcal{R}$ lying in the ``past" (resp. ``future") of $R$ (compare with the notion of first return map for a Markov partition). This point of view will be important to keep in mind during the construction of the Markov partition associated with $\mathcal{R}$.

\begin{lemm}\label{l.existenceofpredecessors}
For any rectangle $R \in \mathcal{R}$ there exists a unique finite collection of rectangles $R_1,...,R_n \in \mathcal{R}$ intersecting $R$ along non-trivial vertical subrectangles and such that:
\begin{enumerate}
\item $R_1,...,R_n$ are maximal for the previous property: any $R' \in \mathcal{R}$ intersecting $R$ along a non-trivial vertical subrectangle satisfies $R' \cap R \subseteq R_i \cap R$ for some $i \in \llbracket 1, n \rrbracket$. 
\item $R_1,...,R_n$ have disjoint interiors. 
\item The $R_1,...,R_n$ cover $R$: $\overset{n}{\underset{i=1}{\cup}} R_i \cap R = R$. 
\end{enumerate}
\end{lemm}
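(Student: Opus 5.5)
Let $\mathcal{V}(R)$ denote the set of rectangles $R'\in\mathcal{R}$ such that $R'\cap R$ is a non-trivial vertical subrectangle of $R$. The plan is to show that $\mathcal{V}(R)$ is nonempty and that the vertical subrectangles $\{R'\cap R : R'\in\mathcal{V}(R)\}$ have maximal elements for inclusion. Then we show that these maximal ones have disjoint interiors, cover $R$, and are finite in number. Uniqueness is then automatic, since the collection is forced to be the set of maximal elements.

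\emph{Vertical subrectangles are nested or interior-disjoint.} Take $R_1,R_2\in\mathcal{V}(R)$ with $\mathrm{Int}(R_1)\cap\mathrm{Int}(R_2)\cap R\neq\emptyset$. Apply the Markovian intersection axiom to $R_1,R_2$: say $R_1\cap R_2$ is a horizontal subrectangle of $R_1$ and a vertical subrectangle of $R_2$.

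Since $R_1\cap R$ is a vertical subrectangle of $R$, its stable boundary lies in $\partial^sR$. Hence the stable boundary of $R_1$ extends beyond $R$ in the stable direction. Comparing with $R_2$, the stable extent of $R_1$ inside $R$ is contained in that of $R_2$. This gives $R_1\cap R\subseteq R_2\cap R$. The step uses Proposition~\ref{p.propertiesoffoliformarkovianactions}(5), that a leaf meets a rectangle in a single segment.

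\emph{Maximal elements exist and are finite in number.} Finiteness comes from compactness of $R$ and the finiteness axiom. By the finiteness and intersection axioms, the $\mathcal{F}^u$-width of $R'\cap R$ should be bounded below for \emph{maximal} elements. To make this precise, argue by contradiction. Suppose there were an infinite strictly increasing chain $R'_1\cap R\subsetneq R'_2\cap R\subsetneq\cdots$, or infinitely many pairwise interior-disjoint maximal ones. From such a family, extract a sequence contradicting the expansivity axiom. A strictly nested chain of vertical pieces, each a vertical subrectangle of the next after reindexing, would have intersection a single $\mathcal{F}^u$-leaf. Reversing the roles of $R$ and the $R_i'$ via the Markovian axiom is where this is used.

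Alternatively, local finiteness near compact sets should follow from the strong finite return time axiom combined with the finiteness axiom. This is the main obstacle, and I would first try to prove a general local finiteness lemma: only finitely many rectangles of $\mathcal{R}$ of bounded ``size'' meet a compact set.

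\emph{Covering.} Take $x\in R$ and a small germ $U$ of a quadrant of $x$ inside $R$. The strong finite return time axiom applied to $U$ gives $R'',R_h,R_v$ with $U\subset R''\cap R_h\cap R_v$. Using the intersection axiom between $R$ and these rectangles, one of them, or a successor of $R$ built from them, lies in $\mathcal{V}(R)$ and contains $U$. Doing this for all quadrants of all $x$ shows that the union of the members of $\mathcal{V}(R)$ covers $R$. Finiteness of the maximal elements then gives item (3). Item (2) follows from the dichotomy proved above, and item (1) follows by the definition of maximality.
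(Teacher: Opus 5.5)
Your nested-or-disjoint dichotomy is correct in substance. The real reason is that $R_1\cap R_2$ spans $R_1$ in the $\mathcal{F}^s$-direction, so the $\mathcal{F}^s$-extent of $R_1\cap R$ lies inside that of $R_2\cap R$; note that $R_1$ sticks out of $R$ in the $\mathcal{F}^u$-direction, not the stable one. Your exclusion of infinite increasing chains also works, provided you use the \emph{horizontal} form of the expansivity axiom. If $R'_k\cap R\subsetneq R'_{k+1}\cap R$, then $R'_{k+1}\cap R'_k$ is a non-trivial horizontal subrectangle of $R'_k$, so $\bigcap_k R'_k$ would be an $\mathcal{F}^s$-leaf of $R'_1$ while containing the strip $R'_1\cap R$. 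An infinite increasing sequence cannot be ``reindexed'' into a decreasing one.

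The genuine gap is the covering step, which is the actual content of the lemma. Finitely many vertical strips covering $R$ automatically contain every quadrant germ of every point of $R$. So any proof must show that for every $x\in R$ and every germ $U\subset R$ of a quadrant of $x$, some $R'\in\mathcal{V}(R)$ contains $U$. Your argument does not give this. The strong finite return axiom at $x$ yields $S,S_h,S_v$ containing $U$, and all of these are pairwise comparable with $R$. If $S_v\cap R$ is a non-trivial vertical subrectangle of $R$, you are done. But if $S_v=R$ or $S_v$ is a successor of $R$, then $S_v,S,S_h$ are all equal to $R$ or successors of $R$. In that case one application of the axiom at $x$ says nothing about rectangles below $R$. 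Your fallback, ``a successor of $R$ built from them'', cannot work: a successor meets $R$ in a horizontal subrectangle and is never in $\mathcal{V}(R)$. What is missing is an argument that no rectangle of $\mathcal{R}$ can be the lowest one, for the predecessor relation, among those containing a given quadrant germ.

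Your route to finiteness also fails. The expansivity axiom only constrains sequences in which consecutive rectangles meet along non-trivial vertical (or horizontal) subrectangles. Infinitely many pairwise interior-disjoint maximal predecessors do not meet one another, so no such sequence can be extracted from them. The suggested local-finiteness lemma is left unproven.

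Finiteness is actually the easy part once the germ statement is known, and it comes from compactness:
\begin{enumerate}
\item For each $y\in R$, the quadrant germs at $y$ give a neighbourhood of $y$ in $R$ covered by at most four strips $T\cap R$ with $T\in\mathcal{V}(R)$.
\item Compactness of $R$ then gives finitely many $T_1,\dots,T_K$ whose traces cover $R$.
\item The relative interior of any maximal trace meets that of some $T_j\cap R$, and hence contains $T_j\cap R$ by your dichotomy.
\item Two distinct maximal traces cannot contain the same $T_j\cap R$, so there are at most $K$ maximal traces.
\end{enumerate}

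Two smaller points also need a line each. First, item (2) asks for disjoint interiors in $\mathcal{P}$, not only inside $R$. If $\mathrm{Int}(R_i)\cap\mathrm{Int}(R_j)\neq\emptyset$, the Markovian configuration forces an overlap of interiors inside $R$ as well, which closes this. Second, uniqueness requires that two distinct rectangles cannot have the same trace on $R$; this follows from the non-triviality clause of the Markovian intersection axiom.

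The paper itself does not reprove this lemma; it imports it from Lemma~4.2 of \cite{monstre}.
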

\noindent Naturally, the analogue of the previous lemma for horizontal subrectangles is also true. 

Lemma \ref{l.existenceofpredecessors} was proven for strong Markovian families preserved by Anosov-like strong Markovian actions in \cite{monstre} (see Lemma 4.2). However, the proof does not rely on either the Anosov-like or the strong Markovian nature of the action, and therefore carries over verbatim to our setting.

\begin{defi}\label{d.successor}
For any $R\in \mathcal{R}$, we will say that $R'$ is a \emph{predecessor} (resp. \emph{successor}) \emph{of first generation} of $R$ if $R'\cap R$ is a non-trivial vertical (resp. horizontal) subrectangle of $R$ and $R'$ is maximal for this property in the sense of the previous lemma. 

We will say that $R'$ is a predecessor of \emph{$2$-nd generation} of $R$, if $R'$ is a predecessor first generation of a predecessor of first generation of $R$. We define similarly a predecessor (resp. successor) of \emph{$n$-th generation} for any $n \geq 3$. By convention, the unique predecessor (resp. successor) of $R$ of \emph{generation $0$} is $R$ itself. 

\end{defi}

\begin{lemm}\label{l.npredecessor}
Let $R, R' \in \mathcal{R}$ be such that $R' \cap R$ is a non-trivial vertical subrectangle of $R$. Then, there exists a unique $n \in \mathbb{N}$ such that $R'$ is a predecessor of $n$-th generation of $R$. Furthermore, $R$ is a successor of $n$-th generation of $R'$ for the same integer $n$. Finally, there exists a unique sequence in $\mathcal{R}$, say $R_0=R, R_1,\ldots,R_n=R'$, such that $R_{i+1}$ is a predecessor of first generation of $R_i$ for every $i\in \llbracket 0,n-1\rrbracket$. 

\end{lemm}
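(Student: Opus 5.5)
We argue by induction, in three steps: existence of a chain of first-generation predecessors, uniqueness of that chain (which fixes $n$), and the dual statement for successors.

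\emph{Existence.} Set $R_0=R$. Since $R'\cap R$ is a non-trivial vertical subrectangle of $R$, Lemma~\ref{l.existenceofpredecessors}(1) gives a predecessor of first generation $R_1$ of $R$ with $R'\cap R\subseteq R_1\cap R$. If $R_1=R'$ we stop. Otherwise $\mathrm{Int}(R')\cap\mathrm{Int}(R_1)\neq\emptyset$. By the Markovian intersection axiom, $R'\cap R_1$ is then a non-trivial vertical subrectangle of one of the two rectangles and horizontal in the other.

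We must show that it is vertical in $R_1$. Both $R'\cap R$ and $R_1\cap R$ are vertical subrectangles of $R$, so their unstable extent is that of $R$. If $R_1\cap R'$ were instead a vertical subrectangle of $R'$, then $R_1$ would be "longer in the unstable direction" than $R'$. Using Proposition~\ref{p.propertiesoffoliformarkovianactions}(5) and comparing $\mathcal{F}^s$-boundaries inside $R$, one derives a contradiction: either $R_1\cap R'$ fails to be a subrectangle, or $R'\cap R\subseteq R_1\cap R$ fails. Hence $R'\cap R_1$ is a non-trivial vertical subrectangle of $R_1$, and we iterate.

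\emph{Termination.} This is the main obstacle. Suppose the chain $R_0,R_1,R_2,\dots$ never reaches $R'$. Then $R_{i+1}\cap R_i$ is a non-trivial vertical subrectangle of $R_i$ for every $i$. By the expansivity axiom, $\bigcap_i R_i$ is a single $\mathcal{F}^u$-leaf of $R_0$. But every $R_i$ contains $R'\cap R$, which is a vertical subrectangle of $R$ of positive width, hence contains more than one $\mathcal{F}^u$-leaf of $R$. The width is positive because $\mathrm{Int}(R')\cap \mathrm{Int}(R)\neq\emptyset$, and non-emptiness of that interior intersection is the case the hypothesis intends. This contradiction shows the chain stops at some finite $n$ with $R_n=R'$.

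\emph{Uniqueness.} Take any chain $R=S_0,S_1,\dots,S_m=R'$ of first-generation predecessors. Nesting shows that $S_m\cap S_0\subseteq S_1\cap S_0$. Since the first-generation predecessors of $R$ have disjoint interiors (Lemma~\ref{l.existenceofpredecessors}(2)), $S_1$ is the unique one containing $R'\cap R$, so $S_1=R_1$. Inducting along the chain gives $S_i=R_i$ for all $i$ and $m=n$. This fixes the integer $n$.

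\emph{Successors.} The horizontal analogue of Lemma~\ref{l.existenceofpredecessors} and the symmetric argument give a unique chain of first-generation successors from $R'$ to $R$. It remains to check that reversing the predecessor chain gives this successor chain. For each $i$, $R_i\cap R_{i+1}$ is horizontal in $R_{i+1}$ by the Markovian intersection axiom, so $R_i$ is contained in some first-generation successor $T$ of $R_{i+1}$. Then $R_i$, $T$ and $R$ form a similar configuration, and maximality of $R_{i+1}$ among predecessors of $R_i$ forces $T=R_i$. Hence the successor chain has the same length $n$.
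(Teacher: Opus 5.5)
The paper does not prove this lemma itself; it defers to \cite{monstre}. Your plan is the natural one: iterate maximal first-generation predecessors, stop via the expansivity axiom, get uniqueness from the disjointness in Lemma~\ref{l.existenceofpredecessors}(2), then dualize. The termination step is correct (the hedge about positive width is unnecessary, since a subrectangle is a disk). However, the three steps where the actual combinatorics happens are not established as written.

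\emph{Verticality of $R'\cap R_1$ in $R_1$.} The contradiction you name does not occur. The Markovian axiom guarantees that $R'\cap R_1$ is a subrectangle, and $R'\cap R\subseteq R_1\cap R$ is given. What actually breaks is non-triviality. Since $R'\cap R$ is non-trivially vertical in $R$, it is horizontal in $R'$. So if $\ell$ is the $\mathcal{F}^s$-leaf of a point of $\mathrm{Int}(R'\cap R)$, then $\ell\cap R'\subseteq \ell\cap R'\cap R\subseteq \ell\cap R_1$. If $R'\cap R_1$ were horizontal in $R_1$, we would also have $\ell\cap R_1\subseteq \ell\cap R'$. Then $R'\cap R_1$ would be a vertical subrectangle of $R'$ containing a whole $\mathcal{F}^s$-leaf of $R'$, i.e.\ equal to $R'$, which the axiom excludes.

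\emph{Nesting.} The sentence ``nesting shows $S_m\cap S_0\subseteq S_1\cap S_0$'' carries the whole uniqueness argument, and it needs a three-rectangle lemma. The lemma: if $B\cap A$ is non-trivially vertical in $A$ and $C\cap B$ is non-trivially vertical in $B$, then $C\cap A=C\cap B\cap A$, and this is a non-trivial vertical subrectangle of $A$. To prove it, note that inside $B$ the set $Q:=C\cap B\cap A$ is the intersection of a vertical and a horizontal strip. So $Q$ has non-empty interior, is vertical in $A$, and is horizontal in $C$. Hence $C\neq A$ (else $Q$ would be all of $A$), so $C\cap A$ is a rectangle. It has the same $\mathcal{F}^s$- and $\mathcal{F}^u$-leaves as $Q$ through an interior point of $Q$, so it equals $Q$. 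Induction along the chain then gives the nesting. It also excludes $m>n$, which would make $R'$ a non-trivial vertical subrectangle of itself; you should say this explicitly.

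\emph{Successors.} The statement ``$R_i$ is contained in some first-generation successor $T$'' is false as written, since distinct members of $\mathcal{R}$ are never nested; you mean $R_i\cap R_{i+1}\subseteq T$. Moreover, even granting that $T$ meets $R_i$ in a non-trivial vertical subrectangle, maximality of $R_{i+1}$ only yields $T\cap R_i=R_{i+1}\cap R_i$, not $T=R_i$. Here is a clean finish. Suppose $T\neq R_i$. The horizontal version of the first point shows that $T\cap R_i$ is non-trivially vertical in $R_i$, so $T$ is a predecessor of $R_i$ of positive generation. Also, $R_{i+1}$ is a predecessor of $T$ of positive generation. Concatenating gives a chain of length at least $2$ from $R_i$ to $R_{i+1}$, which contradicts the uniqueness you have already proved.
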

\begin{lemm}\label{l.npredecessorsdisjoint}
Take $R \in \mathcal{R}$ and $N\in \mathbb{N}^*$. The predecessors (resp. successors) of $R$ of generation $N$ have disjoint interiors. 

\end{lemm}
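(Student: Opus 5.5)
Fix $R\in\mathcal{R}$ and $N\geq 1$. I would argue by induction on $N$, after first extracting a single structural fact. Let $R',R''$ be two distinct predecessors of generation $N$ of $R$. By Lemma~\ref{l.npredecessor}, each of them intersects $R$ along a non-trivial vertical subrectangle, and each is joined to $R$ by a \emph{unique} chain of first-generation predecessors. Write these chains as $R=R'_0,R'_1,\ldots,R'_N=R'$ and $R=R''_0,\ldots,R''_N=R''$.

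The case $N=1$ is Item~(2) of Lemma~\ref{l.existenceofpredecessors}. For the inductive step, let $k$ be the largest index with $R'_k=R''_k$. Then $k<N$, since otherwise $R'=R''$.

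\textbf{Case $k=N-1$.} Here $R'$ and $R''$ are two distinct first-generation predecessors of the same rectangle $R'_{N-1}$. Their interiors are disjoint by Lemma~\ref{l.existenceofpredecessors}.

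\textbf{Case $k<N-1$.} Then $R'_{k+1}$ and $R''_{k+1}$ are distinct first-generation predecessors of $R'_k$, so they have disjoint interiors. I want to deduce that $R'$ and $R''$ do too. Suppose instead that $\mathrm{Int}(R')\cap\mathrm{Int}(R'')\neq\emptyset$. Since $R'\neq R''$, the Markovian intersection axiom says that, up to swapping, $R'\cap R''$ is a non-trivial horizontal subrectangle of $R'$ and a non-trivial vertical subrectangle of $R''$. In particular $R'$ is a predecessor of some generation $m\geq1$ of $R''$.

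Now combine this with the chains. The rectangle $R'\cap R$ is a vertical subrectangle of $R$ contained in $R'$. Being vertical inside $R'$ as well, it is a full-height strip that crosses $R'$ completely. The region $R'\cap R''$ is a horizontal strip of $R'$. These two strips of $R'$ must meet in the interior, so $\mathrm{Int}(R)\cap\mathrm{Int}(R')\cap\mathrm{Int}(R'')\neq\emptyset$.

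From this triple intersection I would derive that $R'\cap R$ is contained in $R''$. The idea is to compare stable widths along the unstable direction: both $R'\cap R$ and $R''\cap R$ are vertical strips of $R$, and $R'\cap R''$ is vertical in $R''$. Using Proposition~\ref{p.propertiesoffoliformarkovianactions}(5), these configurations force $R'\cap R\subseteq R''\cap R$. Then $R'$ is a predecessor of $R$ passing through $R''$. Concatenating the chain from $R$ to $R''$ (length $N$) with the chain from $R''$ to $R'$ (length $m$) gives a chain from $R$ to $R'$ of length $N+m$, provided concatenation respects the predecessor-of-first-generation relation. That relation is exactly what the successor part of Lemma~\ref{l.npredecessor} gives. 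This contradicts the uniqueness of $n=N$ in Lemma~\ref{l.npredecessor}.

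\textbf{Main obstacle.} The delicate point is the geometric claim that the triple intersection forces $R'\cap R\subseteq R''$. Equivalently, one must show that intermediate generations nest: if $R'\cap R\subseteq R''\cap R$ with both vertical in $R$ and $R'$ vertical-overlapping $R''$, then the chains compose. I would first try to prove this with the maximality in Lemma~\ref{l.existenceofpredecessors}(1), applied at each step of the chain.

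The successor statement is symmetric, exchanging horizontal and vertical throughout.
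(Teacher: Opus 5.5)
The proposal has a genuine gap, but it lies in a step you do not need.

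\textbf{The false step.} You claim that $R'\cap R$ is ``vertical inside $R'$ as well''. This is false. $R'\cap R$ is a non-trivial vertical subrectangle of $R$, so the Markovian intersection axiom makes it a non-trivial \emph{horizontal} subrectangle of $R'$. Under your normalization, $R'\cap R''$ is also horizontal in $R'$. Two horizontal strips of $R'$ need not meet, so the triple intersection $\mathrm{Int}(R)\cap\mathrm{Int}(R')\cap\mathrm{Int}(R'')\neq\emptyset$ is not established. The containment $R'\cap R\subseteq R''$ that you wanted to extract from it stays unproved, as you acknowledge. As written, the argument rests on an unsupported and partly false geometric claim.

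\textbf{The fix.} The detour is unnecessary; the ending you already have works on its own.
\begin{itemize}
\item Lemma \ref{l.npredecessor} makes $R'$ a predecessor of generation $m$ of $R''$, with $m\geq 1$ since $R'\neq R''$.
\item Appending the chain from $R''$ to $R'$ to the chain $R=R''_0,\ldots,R''_N=R''$ gives, \emph{by definition}, a chain of first-generation predecessors of length $N+m$ from $R$ to $R'$. This is Item (1) of Remark \ref{r.precedentsuivant}. There is no compatibility to check, and no need for $R'$ to ``pass through'' $R''$ geometrically.
\item Since $R'\cap R$ is a non-trivial vertical subrectangle of $R$, uniqueness of the generation in Lemma \ref{l.npredecessor} gives $N+m=N$, a contradiction.
\end{itemize}
This treats every $N\geq 1$ at once. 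The induction and the case split on the last common index $k$ can be dropped; in the case $k<N-1$ you never used the disjointness of $R'_{k+1}$ and $R''_{k+1}$ anyway.

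\textbf{One point to justify.} You attribute to Lemma \ref{l.npredecessor} the fact that a predecessor $R_N$ of generation $N\geq1$ of $R_0=R$ meets $R$ in a non-trivial vertical subrectangle. That lemma states the converse, so this needs its own short proof:
\begin{itemize}
\item By induction, $R_0\cap\cdots\cap R_j$ is a vertical subrectangle of $R_0$ and a horizontal subrectangle of $R_j$. For the step, intersect it with the vertical strip $R_j\cap R_{j+1}$ of $R_j$.
\item Hence $R_N\cap R_0$ contains a vertical strip of $R_0$ with non-empty interior.
\item The Markovian intersection axiom then forces $R_N\cap R_0$ to be a non-trivial vertical subrectangle of $R_0$, unless $R_N=R_0$.
\item The case $R_N=R_0$ is excluded by the expansivity axiom applied to the periodic chain $R_0,\ldots,R_{N-1},R_0,\ldots$, whose intersection has non-empty interior.
\end{itemize}
For context, the paper gives no proof of its own for this lemma and refers to \cite{monstre}.
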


Once again, Lemmas \ref{l.npredecessor} and \ref{l.npredecessorsdisjoint} were proven for strong Markovian families preserved by Anosov-like strong Markovian actions in \cite{monstre} (see Remark 4.4, Lemmas 4.5 and 4.6). However, the proofs given in \cite{monstre} apply in the exact same way in our setting. 

\begin{figure}[h]
    \centering
    \includegraphics[scale=0.45]{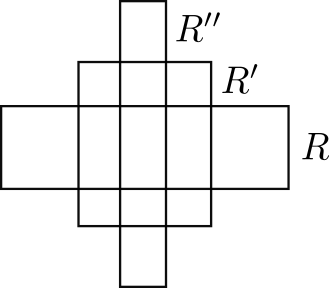}
    \caption{In the above figure, $R''$ is a predecessor of $R'$ of some generation and $R'$ is a predecessor of $R$ of some generation.}
    \label{f.predecessorsinchain}
\end{figure}

We finish this section by remarking that the relation ``being a predecessor of some generation" defines a partial order on $\mathcal{R}$: 

\begin{rema}\label{r.precedentsuivant}
Take $R, R', R'', R''' \in \mathcal{R}$ such that $R'$ is a predecessor of $n$-th generation of $R$, $R''$ is a predecessor of $k$-th generation of $R'$ and $R'''$ is a successor of $l$-th generation of $R'$ (see Figure \ref{f.predecessorsinchain}). Thanks to Lemmas \ref{l.npredecessor} and \ref{l.npredecessorsdisjoint}, we have that: 
\begin{enumerate}  
    \item $R''$ is a predecessor of $(n+k)$-generation of $R$.
    \item If $\mathrm{Int}(R)\cap \mathrm{Int}(R''')\neq \emptyset$ and $l<n$, then $R'''$ is a predecessor of $R$ of generation $n-l$.
    \item If $\mathrm{Int}(R)\cap \mathrm{Int}(R''')\neq \emptyset$ and $l=n$, then $R'''=R$. 
    \item If $\mathrm{Int}(R)\cap \mathrm{Int}(R''')\neq \emptyset$ and $l>n$, then $R'''$ is a successor of $R$ of generation $l-n$.
     \item For every $g\in G$ we have that $\rho(g)(R')$ is a predecessor of $n$-th generation of $\rho(g)(R)$.
\end{enumerate}

\end{rema}
\subsection{Strong Markovian actions on the plane}

Consider $\rho:G\rightarrow \text{Homeo}(\mathcal{P})$ an orientation preserving strong Markovian action (see Definition \ref{d.markovianaction}) on the bifoliated plane $(\mathcal{P},\mathcal{F}^s,\mathcal{F}^u)$ preserving a strong Markovian family $\mathcal{R}$. Denote by $e$ the trivial element in $G$. 

Although the definition of a strong Markovian action is combinatorial in nature, strong Markovian actions are known to exhibit hyperbolic behavior, as illustrated by Theorem \ref{t.markovianisanosovlike} and Lemma \ref{l.longrectangles} below. More specifically, $\rho$ admits a plethora of periodic points in $\mathcal{P}$, each of which is of saddle type, and for each $R\in\mathcal{R}$ the predecessors (resp. successors) of $R$ of generation $N$ become arbitrarily long in the direction of $\mathcal{F}^u$ (resp. $\mathcal{F}^s$) as $N\to+\infty$.

\begin{defi}
Consider $x\in \mathcal{P}$. We will say that $x$ is a \emph{periodic point for $\rho$} if there exists $g\in G-\{e\}$ such that $\rho(g)(x)=x$. Similarly, for any leaf $l\in \mathcal{F}^s\cup \mathcal{F}^u$ we will say that $l$ is a \emph{periodic leaf for $\rho$} if there exists $g\in G-\{e\}$ such that $\rho(g)(l)=l$. 
\end{defi}

\begin{defi}\label{d.topologicalexpansion}
    Let $g\in G$ and $l\in \mathcal{F}^s$. We will say that $\rho(g)$ acts as a \emph{topological contraction} (resp. \emph{expansion}) on $l$ if there exists $x_0\in l$ such that for every $x\in l$ the sequence $\rho(g^n)(x)$ converges to $x_0$ as $n\rightarrow +\infty$ (resp. as $n\rightarrow -\infty$). 
\end{defi}

\begin{theorem}[Theorem 1.5, Lemmas 3.3 and 3.6,   \cite{markovianactions}]\label{t.markovianisanosovlike}
Consider $\rho:G\rightarrow \text{Homeo}(\mathcal{P})$ an orientation preserving strong Markovian action on the bifoliated plane $(\mathcal{P},\mathcal{F}^s,\mathcal{F}^u)$ and $\mathcal{R}$ a strong Markovian family preserved by $\rho$. We have that: 

    \begin{enumerate}
         \item For every $R\in \mathcal{R}$ and every $g\in G$, if $\rho(g)(R)=R$, then $g=e$.

         \vspace{0.2cm}
         \item If a leaf $l\in \mathcal{F}^s$ (resp. $l\in \mathcal{F}^u$) contains an $\mathcal{F}^s$-boundary (resp. $\mathcal{F}^u$-boundary) component of some rectangle in $\mathcal{R}$, then $l$ is periodic for $\rho$. 

         \vspace{0.2cm}
        \item If $l\in \mathcal{F}^s \cup \mathcal{F}^u$ is periodic for $\rho$, then $l$ contains a unique periodic point for $\rho$. 

         \vspace{0.2cm}
        \item For any $g\in G-\{e\}$ and any $x\in \mathcal{P}$ fixed by $\rho(g)$, up to changing $g$ to $g^{-1}$, we have that $\rho(g)$ is 
        topologically expanding on $\mathcal{F}^u(x)$ and topologically contracting on $\mathcal{F}^s(x)$.
       \end{enumerate} 

\end{theorem}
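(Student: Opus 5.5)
The plan is to prove the four items in order, using as tools the combinatorics of predecessors and successors (Lemmas~\ref{l.existenceofpredecessors}, \ref{l.npredecessor}, \ref{l.npredecessorsdisjoint} and Remark~\ref{r.precedentsuivant}), the finiteness of the number of $\rho$-orbits of rectangles, and the expansivity axiom.

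\textbf{Item (1).} Suppose $\rho(g)(R)=R$. Since $\rho$ preserves orientation and preserves each foliation, some power $g^k$ with $k\in\{1,2\}$ fixes each boundary component of $R$. By Remark~\ref{r.precedentsuivant}(5), $\rho(g^k)$ permutes the predecessors of generation $N$ of $R$, and likewise its successors.
\begin{itemize}
\item By Lemma~\ref{l.npredecessorsdisjoint} these predecessors have disjoint interiors, so they cut $R$ into vertical strips that are linearly ordered along $\mathcal{F}^s$.
\item $\rho(g^k)$ preserves this order and the finite set of strips, hence fixes every one of them.
\item By the expansivity axiom, every $\mathcal{F}^u$-leaf of $R$ is a nested intersection of such strips, so it is $\rho(g^k)$-invariant. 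The same holds for every $\mathcal{F}^s$-leaf of $R$, and therefore $\rho(g^k)$ is the identity on $R$.
\end{itemize}
To propagate this, take any $R'\in\mathcal{R}$ with $\mathrm{Int}(R')\cap\mathrm{Int}(R)\neq\emptyset$. Then $\rho(g^k)(R')$ meets $R$ in exactly the same subrectangle as $R'$ does, and has the same generation relative to $R$. The uniqueness statement of Lemma~\ref{l.npredecessor} then forces $\rho(g^k)(R')=R'$, and the previous argument shows $\rho(g^k)$ is the identity on $R'$. Since $\mathcal{R}$ covers the connected plane, $\rho(g^k)=\mathrm{id}$. Faithfulness gives $g^k=e$, and absence of torsion gives $g=e$.

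\textbf{Item (2).} Let $l$ contain an $\mathcal{F}^s$-boundary component $c$ of $R$.
\begin{itemize}
\item For each $n$, among the successors of generation $n$ of $R$, select the one $S_n$ whose intersection with $R$ touches $c$.
\item Since $R\cap S_n$ is a vertical subrectangle of $S_n$, its $\mathcal{F}^s$-boundary lies in $\partial^sS_n$. Hence $l$ contains an $\mathcal{F}^s$-boundary component of every $S_n$.
\item By the finiteness axiom, and because there are only two possible sides, there exist $m<n$ and $g\in G$ with $S_n=\rho(g)(S_m)$ such that $\rho(g)$ maps the relevant side of $S_m$ to the relevant side of $S_n$. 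Then $\rho(g)(l)=l$.
\item The rectangles $S_m$ and $S_n$ have different generations relative to $R$, so they are distinct by Remark~\ref{r.precedentsuivant}. Thus $g\neq e$.
\end{itemize}

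\textbf{Item (4).} Let $\rho(g)(x)=x$ with $g\neq e$. Replacing $g$ by a power, which is harmless by torsion-freeness, we may assume $\rho(g)$ fixes each quadrant of $x$. Fix a small germ $U$ of a quadrant and a rectangle $R\ni U$ given by the strong finite return time axiom.
\begin{itemize}
\item The rectangle $\rho(g)(R)$ still contains a germ of the same quadrant, so $\mathrm{Int}(R)\cap\mathrm{Int}(\rho(g)(R))\neq\emptyset$.
\item The two rectangles are distinct by Item (1). By the Markovian intersection axiom and Lemma~\ref{l.npredecessor}, $\rho(g)(R)$ is a predecessor or a successor of $R$ of some generation $m\geq1$; up to replacing $g$ by $g^{-1}$, it is a predecessor.
\item By Remark~\ref{r.precedentsuivant}, the sequence $\rho(g^j)(R)$ is then a chain of predecessors.
\item The expansivity axiom shows that $\bigcap_j\rho(g^j)(R)$ is the $\mathcal{F}^u$-leaf of $x$ in $R$. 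Hence $\rho(g^{-n})$ shrinks the $\mathcal{F}^s$-direction of $R$ onto this leaf. Symmetrically, $\rho(g^n)$ shrinks the $\mathcal{F}^u$-direction onto the $\mathcal{F}^s$-leaf of $x$.
\end{itemize}
This gives topological contraction on $\mathcal{F}^s(x)$ and expansion on $\mathcal{F}^u(x)$, first locally and then globally, since every point of the leaf lies in some $\rho(g^{\pm j})(R)$.

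\textbf{Item (3), the main obstacle.} Let $\rho(g)(l)=l$ with $g\neq e$.

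For uniqueness, suppose $\rho(g)$ fixed two points $x\neq y$ of $l$. Cover the segment $[x,y]$ by finitely many rectangles. Item (4) applied at $x$ makes $\rho(g^{\pm n})$ contract $[x,y]$ towards $x$, which contradicts $\rho(g)(y)=y$. The same reasoning applies to distinct group elements fixing points of $l$, after passing to a common power; here I would need to check that two elements stabilizing $l$ have commensurable actions.

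For existence, I would argue by contradiction. Suppose $\rho(g)$ acts on $l$ as a fixed-point-free translation. Choose a rectangle $R$ that meets $l$ along a segment $I$ straddling a fundamental domain of the translation. Then $\rho(g)(R)$ meets $R$ with overlapping interiors, so by the argument of Item (4) it is a predecessor or a successor of $R$. The nested chain $\rho(g^j)(R)$ then collapses by expansivity to a single leaf of $R$. That leaf is $\rho(g)$-invariant and meets $l$ in one point (Proposition~\ref{p.propertiesoffoliformarkovianactions}), and this point is fixed, a contradiction.

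The delicate point, which I expect to be the main difficulty, is to ensure that $R\cap\rho(g)(R)$ really has non-empty interior under a translation. I would try to obtain it by choosing $R$ long in the $\mathcal{F}^s$-direction, namely a successor of high generation, and then invoking the finiteness axiom.
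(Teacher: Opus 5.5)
There is no in-paper proof to compare against: the paper quotes this statement from the earlier work on Markovian actions, so your proposal has to stand on its own. Three parts are sound:
\begin{itemize}
\item Item (2);
\item the local analysis in Item (4);
\item the part of Item (1) showing that $\rho(g^k)$ is the identity on $R$ and on every rectangle whose interior meets $\mathrm{Int}(R)$.
\end{itemize}
In (4) you should also note that the predecessor/successor alternative is the same in all four quadrants. This holds because adjacent quadrants share a separatrix on which $\rho(g)$ acts by a single monotone map.

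The main gap is that the global part of (4) and the uniqueness in (3) are derived from each other. To globalize (4) you assert that every point of $\mathcal{F}^s(x)$ lies in some $\rho(g^{\pm j})(R)$. Along a separatrix these sets are the increasing segments $[x,\rho(g^{-j})(a)]$. They exhaust it exactly when $\rho(g^{-j})(a)$ does not converge, that is, exactly when $\rho(g)$ has no second fixed point there. In (3) you then deduce that uniqueness from the global contraction, so the argument is circular.

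The single-element case can be settled directly with the Markovian intersection axiom.
\begin{itemize}
\item The local analysis shows that fixed points of $\rho(g)$ on $l$ are isolated. Let $x<x'$ be consecutive ones.
\item Take $R\ni x$ and $R'\ni x'$ containing germs of the quadrants facing each other on one side of $l$. If one of them contains both points, its collapsing chain contradicts expansivity.
\item Suppose, say, $\rho(g)$ pushes $(x,x')$ towards $x$. Then $\rho(g^n)(R')\cap l$ keeps containing $x'$ while its other endpoint tends to $x$ without reaching it.
\item For large $n$ this segment overlaps $R\cap l$ without being nested with it, which the axiom forbids.
\end{itemize}

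For two different elements $g,h$ fixing $x\neq y$ on $l$, ``passing to a common power'' presupposes that the stabilizer of $l$ is essentially cyclic. That is part of what must be proved. One way to do it, once the single-element case and global (4) are known:
\begin{itemize}
\item Overlapping members of the expanding families $\rho(g^{-n})(A)$ based at $x$ and $\rho(h^{m})(B)$ based at $y$ must have nested $l$-segments.
\item Hence each $\rho(h^m)(B)$ is squeezed between two consecutive $\rho(g^{-n})(A)$, so $\rho(g^nh^m)(B)$ is a successor of $A$ of bounded generation.
\item Finiteness of such successors plus Item (1) give $g^{n-n'}=h^{m'-m}\neq e$ for some $m\neq m'$. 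This is a non-trivial element fixing both $x$ and $y$, contradicting the single-element case.
\end{itemize}

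The existence part of (3) is not proved. A rectangle $R$ with $R\cap l$ longer than a fundamental domain of the translation is the whole difficulty, and neither expansivity nor finiteness provides it. Expansivity makes successors thin in the $\mathcal{F}^u$-direction, not long in the $\mathcal{F}^s$-direction. Along $l$ the segments cut out by a chain of successors increase, but they can stop growing at a point where all these successors have an $\mathcal{F}^u$-boundary component on one and the same $\mathcal{F}^u$-leaf. This is exactly the periodic-point obstruction recorded in Lemma~\ref{l.longrectangles}. The finiteness axiom identifies rectangles only up to $\rho$, not the position of $l$ inside them. You need a genuine long-rectangle statement along $l$, proved without using (3). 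Given such an $R$, no collapsing chain is needed: after replacing $g$ by $g^2$ so that it preserves the sides of $l$, the Markovian intersection of $R$ and $\rho(g)(R)$ forces their $l$-segments to be nested, which a translation cannot do.

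Finally, the last step of Item (1), ``since $\mathcal{R}$ covers the connected plane'', does not conclude. The unions of interiors over the classes of the relation ``interiors meet'' are disjoint open sets, but they need not cover $\mathcal{P}$. For families coming from Markov partitions, a periodic point on the boundary of one rectangle lies in the interior of none. So connectedness alone does not yield a single class. You must show that the set where $\rho(g^k)$ is locally the identity is closed, which at such points means getting from one quadrant to the adjacent ones.
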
 

\begin{lemm}[Lemmas 3.8 and 3.12, \cite{markovianactions}] \label{l.longrectangles}
 Consider $\rho:G\rightarrow \text{Homeo}(\mathcal{P})$ an orientation preserving strong Markovian action on the bifoliated plane $(\mathcal{P},\mathcal{F}^s,\mathcal{F}^u)$ and $\mathcal{R}$ a strong Markovian family preserved by $\rho$. Fix $l$ a leaf in $\mathcal{F}^s$ and $D$ be the closure of a connected component of $\mathcal{P} - l$. 
   \begin{enumerate}
       \item If $l$ is periodic, let $p$ be the unique periodic point for $\rho$ contained in $l$. Then, we have that
       
       \begin{itemize}
        \item for every $x,y\in l$ belonging in the same $\mathcal{F}^s$-separatrix of $p$ in $l$ there exists $R\in \mathcal{R}$ containing the points $x,y$ and satisfying $\mathrm{Int}(R)\cap D\neq \emptyset$. 
        \item for any $x,y\in l$ belonging in different  $\mathcal{F}^s$-separatrices of $p$ in $l$ there exists $R\in \mathcal{R}$ containing the points $x,y$ and satisfying $\mathrm{Int}(R)\cap D\neq \emptyset$ if and only if there exists $R'\in \mathcal{R}$ containing $p$ and a neighborhood of $p$ inside $D$.

        \end{itemize}

        \vspace{0.2cm}
        \item If $l$ is not periodic, then we have that for every $x,y\in l$ there exists $R\in \mathcal{R}$ containing the points $x,y$ and satisfying $\mathrm{Int}(R)\cap D\neq \emptyset$.

   \end{enumerate} 
   
  \end{lemm}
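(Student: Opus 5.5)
The plan is to prove the lemma by a connectedness argument along $l$, and to handle the periodic case using the dynamics of Theorem~\ref{t.markovianisanosovlike}. Fix $x\in l$. Let $S_x$ be the set of $z\in l$ such that the closed $\mathcal{F}^s$-segment $[x,z]\subset l$ is contained in some $R\in\mathcal{R}$ with $\mathrm{Int}(R)\cap D\neq\emptyset$. By Proposition~\ref{p.propertiesoffoliformarkovianactions}(5), such an $R$ meets $l$ along a single $\mathcal{F}^s$-segment, so $S_x$ is an interval of $l$ containing $x$. The strong finite return time axiom, applied at $x$ to the two quadrant germs of $x$ lying in $D$, together with the merging step below, shows that $S_x$ contains a neighbourhood of $x$ in $l$. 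The goal is then to determine when $S_x$ is all of $l$, or all of the separatrix of $p$ containing $x$.

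The basic tool is a \emph{merging} step. Let $R_1,R_2\in\mathcal{R}$ both meet $\mathrm{Int}(D)$ near a common point $z\in l$, and suppose their interiors intersect. By the Markovian intersection axiom, up to swapping them, $R_1\cap R_2$ is a horizontal subrectangle of $R_1$ and a vertical subrectangle of $R_2$. Then every $\mathcal{F}^s$-leaf of $R_1$ that meets $R_2$ is contained in an $\mathcal{F}^s$-leaf of $R_2$. Hence, if the stable segments $R_1\cap l$ and $R_2\cap l$ overlap, $R_2$ contains their union and still meets $\mathrm{Int}(D)$. One has to check that $l$ crosses the common part, and that the common part lies on the $D$ side rather than being a degenerate boundary intersection. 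Granting this, the argument runs as follows. Let $z$ be an endpoint of the closure of $S_x$. Apply axiom (3) to the two quadrant germs of $z$ contained in $D$; this gives rectangles $R^{\pm}$ containing these germs. Merge $R^{-}$ with a rectangle realising $[x,z']$ for $z'$ close to $z$. If $R^{-}$ and $R^{+}$ can also be merged, then $z$ lies in the interior of $S_x$, a contradiction. So either $S_x=l$, or merging fails at $z$.

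The main obstacle is analysing this failure. Merging fails only when, for every choice, $z$ lies on an $\mathcal{F}^u$-boundary component of each rectangle of $\mathcal{R}$ containing a $D$-germ at $z$ on one side. In that situation I would argue as follows.
\begin{itemize}
\item The $\mathcal{F}^u$-leaf through $z$ contains a boundary component, so it is periodic by Theorem~\ref{t.markovianisanosovlike}(2).
\item Applying axiom (3) to both quadrants on the $D$ side, the rectangle $R$ given there together with $R_h$ and $R_v$ shows that $z$ is a corner for the two germs simultaneously. Hence $l$ also contains a stable boundary component and is periodic.
\item By Theorem~\ref{t.markovianisanosovlike}(3), both leaves then have unique periodic points. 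Using Theorem~\ref{t.markovianisanosovlike}(4), I would show that $z$ is fixed and therefore $z=p$.
\end{itemize}
In particular, when $l$ is not periodic, merging never fails, which gives item (2).

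For item (1), first take $x,y$ in the same separatrix of $p$. Let $g$ be a generator of the stabiliser of $p$, chosen so that $\rho(g)$ contracts $l$ towards $p$ and preserves $D$; if needed, replace $g$ by $g^2$, which is possible because $\rho$ preserves orientation. By the above, $S_x$ contains a neighbourhood of $x$. Applying $\rho(g^{-n})$, which preserves $\mathcal{R}$ and $D$, the image of a small rectangle around $\rho(g^n)(x)$ covers $[x,y]$ for $n$ large. For $x,y$ on different separatrices, a rectangle $R$ with $\mathrm{Int}(R)\cap D\neq\emptyset$ containing $[x,y]$ contains $p$ in its stable leaf, and hence a half-neighbourhood of $p$ in $D$, unless $p\in\partial^uR$; but $p\in\partial^uR$ is impossible since $p$ is interior to $[x,y]$. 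This gives the forward direction. Conversely, if some $R'$ contains $p$ together with a neighbourhood of $p$ inside $D$, then merging succeeds at $p$. The previous argument then yields rectangles covering arbitrarily large segments on both sides of $p$, and a final merge with $R'$ produces the required $R$.
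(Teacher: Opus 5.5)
The paper gives no proof of this lemma; it quotes it from \cite{markovianactions}. So I am judging your argument on its own terms.

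\textbf{What works.} The merging step is correct: if $R_1\cap R_2$ is horizontal in $R_1$ and vertical in $R_2$, then the leaf $l\cap R_1$ lies in $R_2$. It gives closedness of $S_x$. The same-separatrix case follows from dynamics, and both directions of the different-separatrix case then follow. One correction to the dynamical step: use a single fixed rectangle given by axiom (3) at $p$, containing $[p,p+\delta]$ on the relevant side, into which $\rho(g^n)(x)$ and $\rho(g^n)(y)$ both eventually fall. A ``small rectangle around $\rho(g^n)(x)$'' need not contain $\rho(g^n)(y)$. Also, the square of any non-trivial element of the stabiliser will do, so you do not need a generator.

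\textbf{The gap.} The analysis of the failure case is not established, and item (2) and your claim that $S_x$ is a neighbourhood of $x$ rest entirely on it.

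Your second bullet does not follow from axiom (3). In the failure case $R$, $R_h$, $R_v$ all have $z$ on an $\mathcal{F}^u$-boundary component, but nothing forces $z$ to be an endpoint of that component. No corner condition is assumed in this lemma, and at $z=p$ a rectangle may contain $p$ in the interior of its $\mathcal{F}^u$-boundary.

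More seriously, your third bullet cannot be carried out even if the second is granted. Knowing that $l$ and $\mathcal{F}^u(z)$ are both periodic does not make $z$ periodic: $z$ may be the heteroclinic point $\mathcal{F}^s(p)\cap\mathcal{F}^u(q)$ with $p\neq q$, which is exactly what corners of Markov rectangles usually are. Theorem~\ref{t.markovianisanosovlike}(4) only describes the dynamics at a point already known to be fixed. A local argument at $z$ cannot distinguish such a corner, where the lemma says merging does succeed, from the genuine obstruction at $p$.

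\textbf{The missing ingredient.} You never use the expansivity axiom, and it is what is needed here, together with finiteness.

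Suppose $S_x$ stops at $z$ on one side. Then every rectangle containing the $D$-quadrant germ of $z$ on that side has $z$ on an $\mathcal{F}^u$-boundary component; otherwise it would merge with a rectangle realising $[x,z]$.

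Using the horizontal analogue of Lemma~\ref{l.existenceofpredecessors}, build a chain $R_0,R_1,\dots$ in which $R_{n+1}$ is a first-generation successor of $R_n$ containing such a germ. Let $c_n$ be the $\mathcal{F}^u$-boundary component of $R_n$ through $z$. The Markovian intersection axiom gives $c_{n+1}\subsetneq c_n\subset\mathcal{F}^u(z)$.

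By finiteness and a pigeonhole over orbits and the two $\mathcal{F}^u$-boundary components, there are $n_1<n_2<\cdots$ and $g_m\neq e$ with $\rho(g_m)(R_{n_1})=R_{n_m}$ and $\rho(g_m)(c_{n_1})=c_{n_m}\subsetneq c_{n_1}$. By the intermediate value theorem $\rho(g_m)$ fixes a point of $c_{n_m}$. By Theorem~\ref{t.markovianisanosovlike}(3) this point is the unique periodic point $q$ of $\mathcal{F}^u(z)$.

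Expansivity gives $\bigcap_m R_{n_m}=l\cap R_{n_1}$. Hence $q\in\bigcap_m c_{n_m}\subset l\cap\mathcal{F}^u(z)=\{z\}$. So $z=q$ is periodic, $l$ is periodic, and $z=p$. With this replacing your three bullets, the rest of your plan goes through.
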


The proof of our main result differs substantially depending on whether the action $\rho$ preserves orientations of the foliations $\mathcal{F}^s$ and $\mathcal{F}^u$. This motivates the following terminology:

\begin{defi}\label{d.orientablemarkovaction}
We say that $\rho$ is \emph{fully orientable} if it preserves every choice of orientation of $\mathcal{F}^s$ and $\mathcal{F}^u$.
\end{defi}

We conclude this section with the following result, which shows that the corner condition does not limit the scope of Theorem~\ref{t.main}.

\begin{defi}
We say that the strong Markovian family $\mathcal{R}$ satisfies the \emph{corner condition} if, for every $R\in \mathcal{R}$, the periodic points of $\rho$ lying in $\partial R$ are corners of $R$.

Furthermore, we say that a strong Markovian family $\mathcal{R}'$ is \emph{finer than $\mathcal{R}$} if every rectangle in $\mathcal{R}'$ is a subset of a rectangle in $\mathcal{R}$. 
\end{defi}
\begin{prop}\label{p.cornerconditionexists}
    Let $\rho:G\rightarrow \text{Homeo}(\mathcal{P})$ be a strong Markovian action on the bifoliated plane $(\mathcal{P},\mathcal{F}^s,\mathcal{F}^u)$ preserving a strong Markovian family $\mathcal{R}$. There exists a strong Markovian family $\widehat{\mathcal{R}}$ of $\rho$ that is finer than $\mathcal{R}$ and that satisfies the corner condition.
\end{prop}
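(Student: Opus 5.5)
The plan is to refine $\mathcal{R}$ by cutting each rectangle along the leaves through the periodic points lying on its boundary, and then to check that the resulting family is still a strong Markovian family.

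\textbf{Step 1: finitely many bad points.} Fix $R\in\mathcal{R}$ and call a point of $\partial R$ \emph{bad} if it is periodic for $\rho$ but is not a corner of $R$. Each $\mathcal{F}^s$- or $\mathcal{F}^u$-boundary component of $R$ lies in a leaf that is periodic by Theorem~\ref{t.markovianisanosovlike}(2). By Theorem~\ref{t.markovianisanosovlike}(3), such a leaf contains a unique periodic point. Hence $\partial R$ contains at most four bad points.

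\textbf{Step 2: cutting.} Through each bad point $p$, draw the segment of the transverse leaf crossing $R$. If $p$ lies on an $\mathcal{F}^s$-boundary component, this is the $\mathcal{F}^u$-leaf of $p$ in $R$; otherwise it is the $\mathcal{F}^s$-leaf. These finitely many segments cut $R$ into a grid of subrectangles (at most $3\times 3$). Let $\widehat{\mathcal{R}}$ be the collection of all these subrectangles, taken over all $R\in\mathcal{R}$.

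\textbf{Step 3: easy properties.} The construction is canonical, so it commutes with $\rho$: $\rho(g)$ sends periodic points to periodic points and corners to corners. Therefore $\widehat{\mathcal{R}}$ is $\rho$-invariant. It consists of finitely many orbits, it covers $\mathcal{P}$, and it is finer than $\mathcal{R}$.

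\textbf{Step 4: corner condition.} Let $\widehat R\subset R$ be a piece. Its boundary lies in $\partial R$ union the cutting segments. Any periodic point on $\partial R\cap\partial\widehat R$ is either a corner of $R$ or a bad point, and in both cases it is a corner of $\widehat R$.

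It remains to treat a periodic point $q$ on a cutting segment, say the $\mathcal{F}^u$-leaf $\sigma$ through a bad point $p$. The segment $\sigma$ lies in the periodic leaf $\mathcal{F}^u(p)$. By uniqueness in Theorem~\ref{t.markovianisanosovlike}(3), $q=p$, which is a corner of $\widehat R$. So the corner condition holds.

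\textbf{Step 5: the Markov axioms (main obstacle).} The difficulty is the Markovian intersection axiom. A piece of $R_i$ and a piece of $R_j$ must again meet in a horizontal/vertical pattern. This can fail if a cut of $R_j$ in the $\mathcal{F}^s$-direction runs across the overlap $R_i\cap R_j$ but does not extend to a cut of $R_i$. The overlap would then split only partially relative to $R_i$, and the piece of $R_i$ would no longer be crossed completely.

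My remedy is to saturate the cuts. Cut $R$ along every $\mathcal{F}^s$-segment that is the $\mathcal{F}^s$-leaf in $R$ of a bad point of some rectangle $R'$ with $\mathrm{Int}(R')\cap\mathrm{Int}(R)\neq\emptyset$. Do the same for $\mathcal{F}^u$-cuts.

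Finiteness of these extra cuts needs justification. Consider the leaves $\mathcal{F}^s(p)$ through bad points $p$ that meet $R$ and come from overlapping rectangles. I expect to show there are finitely many of them, using the following argument.
\begin{itemize}
\item Overlapping rectangles are predecessors or successors of some generation (Lemma~\ref{l.npredecessor}).
\item A successor $R'$ of $R$ meets $R$ in a horizontal strip. Hence the $\mathcal{F}^s$-cuts of $R'$ fully cross $R$, but there could a priori be infinitely many such $R'$.
\end{itemize}
To control this, I would restrict to the finitely many successors and predecessors of first generation from Lemma~\ref{l.existenceofpredecessors}. I would then argue inductively that cuts propagate consistently, using Remark~\ref{r.precedentsuivant}.

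Alternatively, if saturation is infinite, one could replace $\widehat{\mathcal{R}}$ by a generated family. Take all rectangles of the form $\widehat R\cap\widehat R'$ with a piece of a first-generation successor or predecessor, and check the axioms directly.

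The remaining axioms should then follow from those of $\mathcal{R}$. Expansivity holds because a nested chain of pieces projects to a nested chain of parent rectangles. Strong finite return time follows by shrinking germs, since each germ lies in a single piece of the grid.
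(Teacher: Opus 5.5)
Your Steps 1--4 are correct and match the paper's construction. The paper only organizes it differently: it first performs the $\mathcal{F}^u$-cuts through periodic points in the interiors of $\mathcal{F}^s$-boundary components, verifies the axioms, and then repeats with $\mathcal{F}^s$ and $\mathcal{F}^u$ exchanged.

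The gap is Step 5, which is where all the work lies. You correctly locate the configuration that could break the Markovian intersection axiom, but you never show it is harmless. The saturation you propose is unproven, and as formulated it is counterproductive. It also cuts $R$ along the $\mathcal{F}^s$-leaves of bad points of successors $R'$ of $R$ (that is, $R\cap R'$ horizontal in $R$). Every $\mathcal{F}^s$-leaf of such an $R'$ crosses $R$, and $R$ has successors of every generation. Nothing in your argument bounds the number of resulting cuts, so finiteness of orbits, and even non-degeneracy of the pieces, is at risk. Yet these cuts are not needed at all, since they meet $R$ in full $\mathcal{F}^s$-leaves of $R$. The ``generated family'' fallback is not checked either. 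Your derivation of expansivity and strong finite return also presupposes the intersection pattern that Step 5 was supposed to establish.

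The missing idea is dynamical: the bad configuration never occurs. Precisely, suppose $R_1\cap R_2$ is a non-trivial vertical subrectangle of $R_1$ and a non-trivial horizontal subrectangle of $R_2$, and $p$ is a periodic point in the interior of an $\mathcal{F}^s$-boundary component of $R_1$. Then $p\notin R_2\setminus\partial^sR_2$.

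To see this, suppose otherwise, so that $\mathcal{F}^s(p)\cap R_2$ is an interior $\mathcal{F}^s$-leaf of $R_2$. Take $g\in\mathrm{Stab}_\rho(p)$ contracting $\mathcal{F}^u(p)$ and expanding $\mathcal{F}^s(p)$ (Theorem~\ref{t.markovianisanosovlike}). For large $n$, $\rho(g^n)(R_2)\in\mathcal{R}$ is a thin strip straddling $\mathcal{F}^s(p)$ that sticks out of $R_1$ in the $\mathcal{F}^s$-direction. Hence $\rho(g^n)(R_2)\cap R_1$ is neither a vertical subrectangle of $\rho(g^n)(R_2)$, because part of its $\mathcal{F}^s$-boundary lies on the interior leaf $\mathcal{F}^s(p)$, nor a horizontal one, because part of its $\mathcal{F}^u$-boundary lies in $\partial^uR_1$, off $\partial^u\rho(g^n)(R_2)$. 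This contradicts the Markovian intersection axiom.

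Now, $R_1\cap R_2$ is a union of full $\mathcal{F}^u$-leaves of $R_1$. So the cut $\mathcal{F}^u(p)\cap R_1$ can meet $\mathrm{Int}(R_1\cap R_2)$ only if $p\in R_2$. By the lemma this forces $p\in\partial^sR_2$, and $p$ is not a corner of $R_2$, since otherwise this leaf would lie in $\partial^uR_2$, away from $\mathrm{Int}(R_2)$. In that case $R_2$ is cut along the very same leaf. The symmetric statement handles the $\mathcal{F}^s$-cuts you were worried about.

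Thus cuts are automatically shared exactly where it matters, and no saturation is needed; this is the case analysis in the paper's proof. What remains is to check the non-triviality clause of the axiom for the pieces. Then, as you indicate, expansivity and strong finite return follow from this fact: for pieces $S\subset R$ and $S'\subset R'$ with intersecting interiors, $S\cap S'$ is vertical in $S$ exactly when $R\cap R'$ is vertical in $R$.
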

\begin{proof}
    Consider $R\in \mathcal{R}$. If an $\mathcal{F}^s$-boundary component of $R$ contains a periodic point $p$ of $\rho$ in its interior, then cut $R$ along $\mathcal{F}^u(p)\cap R$ to obtain two rectangles (recall that $\mathcal{F}^u(p)\cap R$ is connected, thanks to Proposition \ref{p.propertiesoffoliformarkovianactions}). Since the $\mathcal{F}^s$-boundary of $R$ contains at most two periodic points of $\rho$ (see Theorem \ref{t.markovianisanosovlike}), after cutting $R$, if necessary, along the one or two $\mathcal{F}^u$-leaves through these points, we obtain one, two, or three rectangles whose $\mathcal{F}^s$-boundaries contain no periodic points in their interiors. Denote by $\mathcal{S}$ the set of rectangles constructed in this way. We will show that $\mathcal{S}$ is a strong Markovian family of $\rho$.

    First, since $\mathcal{R}$ covers the plane $\mathcal{P}$, it easy to see that the finer collection of rectangles $\mathcal{S}$ also covers $\mathcal{P}$. Next, the action $\rho$ preserves its set of periodic points, as well as the foliations $\mathcal{F}^s$ and $\mathcal{F}^u$. Therefore, for every $g\in G$ and every $R\in \mathcal{R}$, if $p$ is a periodic point of $\rho$ lying in the interior of $\partial^s R$, then $\rho(g)(p)$ is a periodic point of $\rho$ lying in the interior of $\partial^s \rho(g)(R)$. In particular, if during the construction of $\mathcal{S}$ the rectangle $R$ was cut along $\mathcal{F}^u(p)\cap R$, then $\rho(g)(R)$ was also cut along $$\rho(g)(\mathcal{F}^u(p)\cap R)=\mathcal{F}^u(\rho(g)(p))\cap \rho(g)(R)$$

We deduce that $\mathcal{S}$ is invariant by $\rho$. Moreover, if $R_1,\ldots,R_n$ form a set of representatives of the rectangle orbits in $\mathcal{R}$, then each $R_i$ gives rise to finitely many rectangles in $\mathcal{S}$ after being cut along finitely many of its $\mathcal{F}^u$-leaves. It is easy to see that the resulting finite collection of rectangles in $\mathcal{S}$ forms a set of representatives of every $\rho$-orbit of rectangles in $\mathcal{S}$.

Let us now show that $\mathcal{S}$ satisfies the Markovian intersection axiom (see Definition \ref{d.markovfamily}). Assume, by contradiction, that there exist $S_1,S_2\in \mathcal{S}$ such that $\mathrm{Int}(S_1)\cap \mathrm{Int}(S_2)\neq \emptyset$ and such that $S_1$ and $S_2$ do not intersect in a Markovian way. By construction, $S_1$ (resp. $S_2$) is a vertical subrectangle of some rectangle $R_1$ (resp. $R_2$) in $\mathcal{R}$. Using our construction and the fact that $\mathrm{Int}(S_1)\cap \mathrm{Int}(S_2)\neq \emptyset$, we get that $R_1\neq R_2$ and that $\mathrm{Int}(R_1)\cap \mathrm{Int}(R_2)\neq \emptyset$. By the Markovian intersection axiom, assume without any loss of generality that $R_1\cap R_2$ is a non-trivial horizontal subrectangle of $R_2$ and a non-trivial vertical subrectangle of $R_1$ (see Figure \ref{f.cutting}). We now distinguish three cases. 

\vspace{0.2cm}
\textbf{Case 1: The interior of $\partial^s R_1$ contains no periodic points of $\rho$}

In this case, $R_1=S_1$. Moreover, $R_1$ intersects every vertical subrectangle of $R_2$ in a Markovian way, which implies that, contrary to our original hypothesis, $S_1$ and $S_2$ also intersect in a Markovian way. 

\vspace{0.2cm}
\textbf{Case 2: The interior of $\partial^s R_1$ contains exactly one periodic point of $\rho$}

Denote the previous periodic point by $p$. Assume first that $p\notin R_2$. In this case, $R_1$ is cut into two rectangles exactly one of which intersects the interior $R_2$ and thus coincides with $S_1$. Similarly to Case 1, $S_1$ intersects every vertical subrectangle of $R_2$ in a Markovian way. Hence, $S_1$ and $S_2$ also intersect in a Markovian way, which is absurd. 

Assume next that $p\in R_2-\partial^s R_2$ (see Figure \ref{f.impossibleintersection}). We will show that this case is impossible. Indeed, consider $g\in \text{Stab}_{\rho}(p)$ such that $\rho(g)$ acts as a contraction on $\mathcal{F}^u(p)$ and as an expansion on $\mathcal{F}^s(p)$ (see Theorem \ref{t.markovianisanosovlike}). For $n$ sufficiently large, we have that $\rho(g^n)(R_2)\in \mathcal{R}$ is a rectangle that is very thin in the direction of $\mathcal{F}^u$ and very long in the direction of $\mathcal{F}^s$. It follows that $\rho(g^n)(R_2)$ intersects $R_1$ as in Figure \ref{f.impossibleintersection}, which contradicts the fact that $\mathcal{R}$ satisfies the Markovian intersection axiom. 

\begin{figure}[h!]
    \centering
    \includegraphics[width=0.5\linewidth]{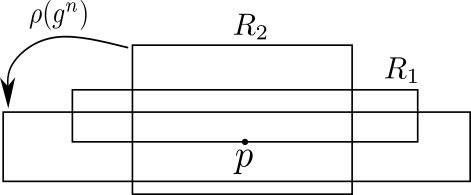}
    \caption{A periodic point in the boundary of $R_1$ can not belong in the interior of $R_2$.}
    \label{f.impossibleintersection}
\end{figure}

Finally, assume that $p\in \partial^s R_2$ (see Figure \ref{f.cutting}). In this case, $R_1$ is cut along $\mathcal{F}^u(p)$ into two rectangles exactly one of which coincides with $S_1$. Similarly, $R_2$ is cut along $\mathcal{F}^u(p)$ into two rectangles exactly one of which, say $R_2'$,  intersects $S_1$. By construction, $S_2$ either coincides with $R_2'$ or is a vertical subrectangle of $R_2'$. However, the rectangle $S_1$ intersects in a Markovian way every vertical subrectangle of $R_2'$, which proves once again that $S_1$ and $S_2$ intersect in a Markovian way and leads to an absurdity. 

\begin{figure}
    \centering
    \includegraphics[width=0.45\linewidth]{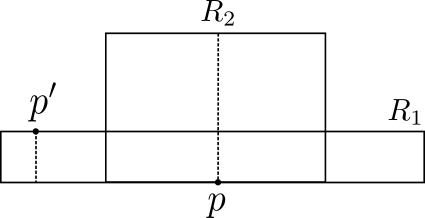}
    \caption{Cutting $R_1$ along two $\mathcal{F}^u$-leaves.}
    \label{f.cutting}
\end{figure}

\vspace{0.2cm}
\textbf{Case 3: The interior of $\partial^s R_1$ contains exactly two periodic points of $\rho$}\nopagebreak

    Denote by $p$ and $p'$ the previous two periodic points. Recall that by our previous arguments, $p,p'\notin R_2-\partial^sR_2$. Therefore, by exchanging the roles of $p$ and $p'$ if necessary, either $p,p'\notin R_2$, or $p\in \partial^s R_2$ and $p'\notin R_2$, or $p,p'\in \partial^sR_2$. The first two cases can be treated similarly to Case 2. Let us now prove that the third case is impossible. 

Indeed, if $p,p'\in \partial^sR_2$, then by Theorem \ref{t.markovianisanosovlike} and the fact that $R_1$ and $R_2$ intersect in a Markovian way, we have that each $\mathcal{F}^s$-boundary component of $R_2$ is included in an $\mathcal{F}^s$-boundary component of $R_1$. However, this contradicts the Markovian intersection axiom, according to which $R_1\cap R_2$ is a \emph{non-trivial} horizontal subrectangle of $R_2$ and a \emph{non-trivial} vertical subrectangle of $R_1$.

\vspace{0.2cm}
We have thus shown that $\mathcal{S}$ satisfies the Markovian intersection axiom. Moreover, our previous arguments show that, for any $S,S'\in \mathcal{S}$ with $\mathrm{Int}(S)\cap \mathrm{Int}(S')\neq \emptyset$, if $S$ is a vertical subrectangle of $R\in \mathcal{R}$ and $S'$ is a vertical subrectangle of $R'\in \mathcal{R}$, then $S\cap S'$ is a vertical subrectangle of $S$ if and only if $R\cap R'$ is a vertical subrectangle of $R$. Thanks to the previous fact, it is not difficult to check that $\mathcal{S}$ satisfies the strong finite return and the expansivity axioms.

We deduce that $\mathcal{S}$ is a strong Markovian family of $\rho$. By construction, the rectangles of $\mathcal{S}$ do not contain periodic points of $\rho$ in the interior of their $\mathcal{F}^s$-boundaries, although they may still contain periodic points in the interior of their $\mathcal{F}^u$-boundaries. By repeating our construction with the roles of $\mathcal{F}^s$ and $\mathcal{F}^u$ reversed, we obtain a strong Markovian family of $\rho$ satisfying the corner condition. 
\end{proof}

During the proof of Proposition \ref{p.cornerconditionexists}, we showed that, after possibly cutting the rectangles of a strong Markovian family finitely many times, we can always obtain a strong Markovian family satisfying the corner condition. Therefore, restricting our attention to strong Markovian families satisfying the corner condition entails no significant loss of generality.

\section{From a fully orientable strong Markovian action to a closed $3$-manifold}\label{s.fullyorientmanifold}

Let $(\mathcal{P},\mathcal{F}^s,\mathcal{F}^u)$ be a bifoliated plane endowed with a fully orientable strong Markovian action $\rho:G\rightarrow \text{Homeo}(\mathcal{P})$. By Proposition \ref{p.cornerconditionexists}, $\rho$ preserves a strong Markovian family $\mathcal{R}$ satisfying the corner condition. 

Our goal in this section is to construct the closed manifold $M$ appearing in the statement of Theorem~\ref{t.main}, under the assumption that our original action is fully orientable. To this end, in Subsection~\ref{ss.definitionwplus} we construct a simply connected $3$-manifold $W^+$ homeomorphic to $\mathbb{R}^3$ on which $G$ acts naturally. Next, in Subsection~\ref{ss.proper-discontinuity}, we show that the natural action of $G$ on $W^+$ is free and properly discontinuous. In Subsection~\ref{ss.big-brother}, we introduce the notion of a \emph{big brother} of a rectangle in the strong Markovian family $\mathcal{R}$, which will then be used in Subsection~\ref{ss.cocompactness} to prove that $G$ acts on $W^+$ cocompactly.

\subsection{The space of positively oriented $\mathcal{F}^s$-segments $W^+$}\label{ss.definitionwplus}

Fix for the rest of this section an orientation of $\mathcal{F}^s$ and $\mathcal{F}^u$. 

\begin{defi}
     For every point $x\in \mathcal{P}$, we define $\mathcal{F}^s_+(x)$ (resp. $\mathcal{F}^s_-(x)$), the \emph{positive} (resp. \emph{negative}) \emph{$\mathcal{F}^s$-separatrix} of $x$, as the set of all $y\in \mathcal{F}^s(x)$ such that either $y=x$ or the orientation of $\mathcal{F}^s(x)$ goes from $x$ to $y$ (resp. from $y$ to $x$). We similarly define the  \emph{positive} and \emph{negative} \emph{$\mathcal{F}^u$-separatrix} of $x$. 
\end{defi}

We define the \emph{space of positively oriented $\mathcal{F}^s$-segments} in $\mathcal{P}$ as follows: 
 $$W^+:=\{(x,y)\in \mathcal{P}^2|y\in \mathcal{F}^+(x)-\{x\} \}.$$

The previous space was originally introduced by T. Marty, who used it in an unpublished work in order to recover any Anosov flow with orientable stable and unstable foliations from its bifoliated plane. In this paper, we will make use of the space $W^+$ in order to generalize the previous result by T. Marty for any bifoliated plane that is endowed with a strong Markovian action. 

The topology of $\mathcal{P}$ naturally induces a topology on $\mathcal{P}^2$ and thus also on $W^+$. Even more, 

\begin{prop}
\label{p.wplusisasimplyconnectedmanifold}
The space $W^+$ is homeomorphic to $\mathbb{R}^3$. 
\end{prop}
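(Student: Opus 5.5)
We show that $W^+$ is homeomorphic to $\mathcal{P}\times(0,+\infty)\cong\mathbb{R}^3$. Here $\mathcal{F}^+(x)$ is read as $\mathcal{F}^s_+(x)$, so $W^+$ is fibred over $\mathcal{P}$ by the first projection $\pi(x,y)=x$, and the fibre over $x$ is the open half-line $\mathcal{F}^s_+(x)-\{x\}$.

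\textbf{Step 1: a global parametrization of the fibres.} We want a continuous function $\tau:W^+\to(0,+\infty)$ whose restriction to each fibre is an increasing homeomorphism onto $(0,+\infty)$. Since $\mathcal{F}^s$ is an oriented foliation of the plane, it is the orbit foliation of a continuous non-singular flow $(\psi_t)$ on $\mathcal{P}$ that moves along the leaves in the positive direction. This follows from Whitney's theorem, applied to a flow whose orbits are the leaves of the foliation; equivalently, one can use a continuous transverse measure for $\mathcal{F}^u$, or a leafwise arclength built from a metric. We choose $\psi$ complete. This can be arranged, for instance, by reparametrizing by a positive function that decays at infinity, which is possible because each leaf is a properly embedded line by Proposition~\ref{p.propertiesoffoliformarkovianactions}, so no orbit escapes in finite time within a compact set.

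\textbf{Step 2: the homeomorphism.} Each leaf is a line with no closed orbits, and it meets each transversal leaf at most once. Hence for every $(x,y)\in W^+$ there is a unique $t>0$ with $\psi_t(x)=y$. We define
$$H:W^+\to\mathcal{P}\times(0,+\infty),\qquad H(x,y)=(x,t).$$
Its inverse $(x,t)\mapsto(x,\psi_t(x))$ is clearly continuous. The map $H$ is a bijection, and the only remaining point is the continuity of $H$, that is, of the hitting time $t(x,y)$.

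\textbf{Step 3: continuity of $t(x,y)$ (main obstacle).} Take $(x_n,y_n)\to(x,y)$ in $W^+$ and set $t=t(x,y)$. We cover the compact $\mathcal{F}^s$-segment $[x,y]$ by finitely many trivially bifoliated charts. In each chart the flow is conjugate to a translation flow, and the $\mathcal{F}^u$-plaques of a chart around $y$ serve as local transversals. For $n$ large, $\psi_s(x_n)$ stays uniformly close to $\psi_s(x)$ for $s\in[0,t+1]$, so it crosses the $\mathcal{F}^u$-plaque of $y_n$ at some time $t_n$ close to $t$. We must then check that this crossing really is the point $y_n$ and not another point of the plaque. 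This holds because $y_n$ lies on the leaf of $x_n$, and that leaf meets the leaf $\mathcal{F}^u(y_n)$ only once, by Item~(4) of Proposition~\ref{p.propertiesoffoliformarkovianactions}. This uniqueness is what excludes the pathology of $y_n$ being reached only after a long excursion, so $t(x_n,y_n)=t_n\to t$.

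We conclude that $H$ is a homeomorphism, and $W^+\cong\mathcal{P}\times(0,+\infty)\cong\mathbb{R}^3$. In particular $W^+$ is simply connected.
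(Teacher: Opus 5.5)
Your proof is correct and follows the paper's route: you take a Whitney flow $\psi$ parametrizing the oriented leaves of $\mathcal{F}^s$ and use the hitting-time map $(x,y)\mapsto(x,t(x,y))$ onto $\mathcal{P}\times(0,+\infty)$. Your Step~3, which uses Item~(4) of Proposition~\ref{p.propertiesoffoliformarkovianactions} to identify the crossing of the plaque of $\mathcal{F}^u(y_n)$ with $y_n$, supplies the continuity of the hitting time that the paper leaves implicit; the completeness discussion in Step~1 is unnecessary, since Whitney's theorem already gives a flow defined on all of $\mathcal{P}\times\mathbb{R}$.
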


\begin{proof}
Indeed, by a result of H. Whitney the leaves of any $C^0$ foliation on the plane are the orbits of a continuous flow. More precisely, Theorem 27A of \cite{Whitney} states that there exists a $C^0$ map 
$f:\mathcal{P}\times \mathbb{R}\rightarrow \mathcal{P}$  such that 
\begin{itemize}
    \item the map $x\rightarrow f(x,t)$ is a homeomorphism for every $t\in\mathbb{R}$,

    \vspace{0.1cm}
    
    \item$f(x,0)=x$ and  $f(f(x,s),t) =f(x, t+s)$ for every $x\in\mathcal{P}$ and $s,t\in\mathbb{R}$, 
    
    \vspace{0.1cm}
    
    \item  $f(x,\mathbb{R}^+)=\mathcal{F}^+(x)$ for every  $x\in\mathcal{P}$.
\end{itemize}
Consider now the map
$$F : \begin{array}{rcl}
W^+ & \rightarrow & \mathcal{P}\times \mathbb{R}_{>0} \\
(x,y) & \mapsto & (x,t(x,y))
\end{array}$$
where $t(x,y)$ is the unique positive real number such that $f(x,t(x,y))=y$. The map $F$ defines a homeomorphism from $W^+$ to $\mathcal{P}\times \mathbb{R}_{>0}\simeq \mathbb{R}^3$. 
\end{proof}

Observe that the orientation preserving action $\rho:G\to\mathrm{Homeo}(\mathcal{P})$ on $\mathcal{P}$ naturally induces an orientation-preserving action on $W^+$
$$\widetilde\rho : G  \to \mathrm{Homeo}(W^+)$$
by setting $\widetilde{\rho}(g)((x,y))=(\rho(g)(x),\rho(g)(y))$. Whenever convenient, we will also use the notation $\widetilde{\rho}(g).(x,y):=\widetilde{\rho}(g)((x,y))$ for the action of $g\in G$ on $W^+$.

The remainder of the section is devoted to proving that this action of free, properly discontinuous and cocompact. It will immediately follow that the quotient $M:=\widetilde{\rho}(G)\backslash W^+$ is a closed $3$-manifold, whose fundamental group is isomorphic to $G$.

\subsection{Freeness and proper discontinuity of $\widetilde{\rho}$}
\label{ss.proper-discontinuity}

Denote by $e$ the trivial element in $G$. Moreover, for any $x\in \mathcal{P}$ and $y\in \mathcal{F}_+^s(x)$ (resp. $y\in \mathcal{F}_+^u(x)$) denote by $[x,y]^s$ (resp. $[x,y]^u$) the unique $\mathcal{F}^s$-segment (resp. $\mathcal{F}^u$-segment) going from $x$ to $y$.

\begin{prop}\label{p.actionproperdisc}
    The action $\widetilde{\rho}$ is free and properly discontinuous. 
\end{prop}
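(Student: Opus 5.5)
Freeness comes first. Suppose $\widetilde\rho(g).(x,y)=(x,y)$ with $g\neq e$. Then $\rho(g)$ fixes both $x$ and $y$, and these are distinct points of the same $\mathcal{F}^s$-leaf $l=\mathcal{F}^s(x)$. So $l$ is a periodic leaf with two fixed points of $\rho(g)$, which contradicts Item (3) of Theorem~\ref{t.markovianisanosovlike}: a periodic leaf contains a unique periodic point. Hence the action is free.

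For proper discontinuity, fix a compact set $K\subset W^+$. The goal is to show that $\{g\in G : \widetilde\rho(g)(K)\cap K\neq\emptyset\}$ is finite. Using the homeomorphism $F$ of Proposition~\ref{p.wplusisasimplyconnectedmanifold}, one can enlarge $K$ to a compact set of the form $\{(x,y): x\in C,\ y\in\mathcal{F}^s_+(x),\ t(x,y)\in[a,b]\}$ with $C\subset\mathcal{P}$ compact and $0<a\le b$. By the compactness axiom, i.e. Item (3) of the informal definition (which follows from the strong finite return axiom together with compactness), $C$ is covered by finitely many rectangles of $\mathcal{R}$. The set of segments $[x,y]^s$ with $(x,y)\in K$ is then a compact family of non-degenerate $\mathcal{F}^s$-segments, and their union lies in a compact set $C'\subset\mathcal{P}$.

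The key step is as follows. Suppose infinitely many distinct $g_n$ satisfy $\widetilde\rho(g_n)(x_n,y_n)=(x_n',y_n')$ with both pairs in $K$. After passing to a subsequence, $(x_n,y_n)\to(x,y)$ and $(x'_n,y'_n)\to(x',y')$, where $x\neq y$ and $x'\neq y'$. Cover a neighborhood of $[x,y]^s$ by finitely many rectangles and a neighborhood of $[x',y']^s$ likewise. Since $G$ is countable and $\mathcal{R}$ consists of finitely many orbits, Theorem~\ref{t.markovianisanosovlike}(1) says stabilizers of rectangles are trivial. Only finitely many rectangles of $\mathcal{R}$ meet the compact set $C'$; this is local finiteness, which I would derive from the Markovian intersection axiom together with the expansivity axiom. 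Combining these, there are only finitely many $g$ mapping a given rectangle $R$ meeting $[x,y]^s$ to a rectangle meeting $[x',y']^s$. This is where care is needed. A rectangle containing $x_n$ is mapped by $g_n$ to a rectangle containing $x'_n$, but rectangles meeting $C'$ might a priori be infinitely many, for example arbitrarily thin long rectangles accumulating. The non-degeneracy of the $\mathcal{F}^s$-segment is what rules this out. The rectangles of $\mathcal{R}$ meeting $C'$ whose $\mathcal{F}^s$-width is at least a fixed positive amount are finitely many, since otherwise the expansivity axiom is contradicted via Lemma~\ref{l.npredecessorsdisjoint}. The segment $[x_n,y_n]^s$ has $\mathcal{F}^s$-length bounded below and is mapped to a segment of length bounded below.

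I would make this precise by choosing, for each $n$, a rectangle $R_n\in\mathcal{R}$ that contains $[x_n,y_n]^s$ or a fixed-proportion subsegment of it, using Lemma~\ref{l.longrectangles} to produce rectangles containing two given points of an $\mathcal{F}^s$-leaf. The image $\rho(g_n)(R_n)$ then contains a subsegment of $[x'_n,y'_n]^s$. The $R_n$ range in a finite set, and so do the $\rho(g_n)(R_n)$. Hence $g_n$ takes finitely many values by Theorem~\ref{t.markovianisanosovlike}(1), which is a contradiction. The main obstacle is this finiteness: showing that only finitely many rectangles of $\mathcal{R}$ contain a uniformly non-degenerate $\mathcal{F}^s$-segment meeting a compact set, uniformly in $n$. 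I would attempt it by arguing that infinitely many such rectangles would yield an infinite chain of successors, using Lemma~\ref{l.npredecessor} and the fact that only finitely many orbits occur, whose intersection shrinks in the $\mathcal{F}^s$-direction to a leaf by the expansivity axiom, which contradicts the lower bound on length.
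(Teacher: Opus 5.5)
Your freeness argument is correct and matches the paper's. The proper-discontinuity part, however, has a genuine gap: it rests on two finiteness statements that are both false.

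\textbf{The finiteness claims are false.} First, it is not true that only finitely many rectangles of $\mathcal{R}$ meet a compact set $C'$. Second, it is not true that only finitely many rectangles of $\mathcal{R}$ meet $C'$ and have $\mathcal{F}^s$-width bounded below. Here is a counterexample to both. Take $x\in R_0\in\mathcal{R}$. The successors of first generation of $R_0$ cover $R_0$, so you can pick a successor $R_1\ni x$ of $R_0$, then a successor $R_2\ni x$ of $R_1$, and so on. These rectangles are pairwise distinct, since $R_k$ is a successor of $R_0$ of generation exactly $k$. Each $R_{k+1}\cap R_k$ is a horizontal subrectangle of $R_k$ and a vertical subrectangle of $R_{k+1}$. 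Hence every $R_k$ contains the whole non-degenerate segment $\mathcal{F}^s(x)\cap R_0$. Your proposed contradiction, an infinite chain of successors ``whose intersection shrinks in the $\mathcal{F}^s$-direction'', has the direction reversed. By the expansivity axiom, a chain of successors converges to an $\mathcal{F}^s$-leaf of $R_0$: it becomes thin in the $\mathcal{F}^u$-direction and long in the $\mathcal{F}^s$-direction, which is perfectly compatible with containing a fixed $\mathcal{F}^s$-segment. Only chains of predecessors shrink to $\mathcal{F}^u$-leaves. Consequently, the rectangles $\rho(g_n)(R_n)$ need not range in a finite set, even though each contains a piece of $[x'_n,y'_n]^s$. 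Your statement that ``only finitely many $g$ map a given rectangle meeting $[x,y]^s$ to one meeting $[x',y']^s$'' is essentially the proposition itself, not a consequence of local finiteness. A smaller issue: a ``fixed-proportion'' sub-segment of $[x_n,y_n]^s$ can have a degenerate image under $\rho(g_n)$. The paper instead chooses the sub-segment (to avoid periodic points, so that Lemma~\ref{l.longrectangles} applies) so that both the sub-segment and its image converge to non-degenerate segments.

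\textbf{What is missing is the successor case, which is the heart of the paper's proof.} The paper fixes $r\supset[x,y]^s$ and $R\supset[X,Y]^s$, with $[X,Y]^s\subsetneq\mathcal{F}^s(X)\cap R$. The distinct rectangles $\rho(g_n)(r)$ are then predecessors or successors of $R$.
\begin{itemize}
\item If infinitely many are predecessors, the expansivity axiom kills this case, as you suggest.
\item If infinitely many are successors, their generations tend to infinity, and the argument uses equivariance. Take a deep predecessor $V$ of $r$, of generation $M$, that $[x_m,y_m]^s$ crosses for $m$ large. Then $\rho(g_m)(V)$ is a predecessor of generation $M$ of $\rho(g_m)(r)$. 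Whenever it meets $\mathrm{Int}(R)$, it is therefore a successor of $R$, so $\rho(g_m)([x_m,y_m]^s)$ crosses $R$. This contradicts $[X,Y]^s\subsetneq\mathcal{F}^s(X)\cap R$.
\item If the images $\rho(g_m)(V)$ eventually miss $\mathrm{Int}(R)$, this forces $X\in\partial^uR$. The argument is then repeated with an auxiliary rectangle $R'$ on the other side of $X$.
\end{itemize}
Without an argument of this kind, your outline does not establish proper discontinuity.
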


\begin{proof}
Suppose that there exist $g\in G$ and $(x,y)\in W^+$ such that $\widetilde{\rho}(g).(x,y)= (x,y)$. By definition of $\widetilde{\rho}$, this means that $\rho(g)$ fixes both $x$ and $y$. In particular, $\rho(g)$ fixes two distinct points on the same $\mathcal{F}^s$-leaf. By Theorem~\ref{t.markovianisanosovlike}, this implies that $g=e$. Therefore, the action $\widetilde{\rho}$ is free.

\bigskip

 Now we turn to the proper discontinuity of $\widetilde{\rho}$. Consider a sequence $(g_n)_{n\in \mathbb{N}}$  in $G$ and a sequence $((x_n,y_n))_{n\in \mathbb{N}}$ in $W^+$ converging to $(x,y)\in W^+$ such that $$\widetilde{\rho}(g_n).(x_n,y_n)\underset{n\rightarrow +\infty}{\longrightarrow} (X,Y).$$ In order to prove that $\widetilde{\rho}$ acts properly discontinuously, it suffices to prove that the same element of $G$ appears infinitely many times in the sequence $(g_n)_{n\in\mathbb{N}}$. Assume, by contradiction, that this is not the case. After possibly passing to a subsequence, we may further assume that the $g_n$'s are pairwise distinct.
 
By definition, the $\mathcal{F}^s$-segments $[x_n,y_n]^s$ (resp. $\rho(g_n)([x_n,y_n]^s)$) converge for the Hausdorff topology to $[x,y]^s$ (resp. $[X,Y]^s$).

\vspace{0.4cm}
\textbf{Claim.} By changing our choice of sequence $((x_n,y_n))_{n\in \mathbb{N}}$ if necessary, we can assume that the $\mathcal{F}^s$-segments $[x,y]^s$ and $[X,Y]^s$ contain no periodic points of $\rho$ in their interiors. 

\begin{proof}[Proof of the claim]
  Recall that, by Theorem~\ref{t.markovianisanosovlike}, there is at most one periodic point of $\rho$ in each of the segments $[x,y]^s$ and $[X,Y]^s$. Assume now that $[x,y]^s$ contains a periodic point $p$ in its interior. In this case, since $[x_n,y_n]^s$ converges for the Hausdorff topology to $[x,y]^s$, there exists a sequence $(p_n)_{n\in \mathbb{N}}$ in $\mathcal{P}$ such that $p_n\in [x_n,y_n]^s$ and $p_n\underset{n\rightarrow +\infty}{\longrightarrow} p$. Moreover, since $\rho(g_n)([x_n,y_n]^s)$ converges for the Hausdorff topology to $[X,Y]^s$, up to extracting a subsequence, there exists $P\in [X,Y]^s$ such that 
$$\rho(g_n)(p_n)\underset{n\rightarrow +\infty}{\longrightarrow} P $$

Assume without any loss of generality that $P\neq X$ (the case where $P=X$ can be treated by a similar argument). Under this assumption, we have
$$x_n\underset{n\rightarrow +\infty}{\longrightarrow} x, ~~p_n\underset{n\rightarrow +\infty}{\longrightarrow} p  \text{ and } (x,p)\in W^+$$ 
$$\rho(g_n)(x_n)\underset{n\rightarrow +\infty}{\longrightarrow} X,~~~ \rho(g_n)(p_n)\underset{n\rightarrow +\infty}{\longrightarrow} P \text{ and } (X,P)\in W^+$$ 
We have thus proven that, by  replacing our initial choice of sequence $(y_n)_{n\in \mathbb{N}}$ and by possibly passing to a subsequence, we can assume that $[x,y]^s$ contains no periodic point of $\rho$ in its interior. The argument for the segment $[X,Y]^s$ is similar.
\end{proof}

Using the previous claim and Lemma~\ref{l.longrectangles}, we obtain that there exist $r,R\in\mathcal{R}$ such that 

\begin{enumerate}
    \item $[x,y]^s\subset r$ and $[X,Y]^s\subset R$.
    \vspace{0.1cm}
    \item $[X,Y]^s\subsetneq \mathcal{F}^s(X)\cap R$. In other words, $[X,Y]^s$ is strictly contained in the $\mathcal{F}^s$-leaf of $X$ in $R$. 
    \vspace{0.1cm}
    \item $R\cap \rho(g_n)([x_n,y_n]^s)\neq\emptyset$ and $\mathrm{Int}(R)\cap \rho(g_n)(\mathrm{Int}(r))\neq\emptyset$  for infinitely many $n\in \mathbb{N}$.
\end{enumerate}
Passing to a subsequence if necessary, assume that the previous properties hold for every $n\in \mathbb{N}$.

Thanks to Theorem \ref{t.markovianisanosovlike}, since the $g_n$'s are pairwise distinct, the rectangles $\rho(g_n)(r)$ are also pairwise distinct. Moreover, since $\mathrm{Int}(R)\cap \rho(g_n)(\mathrm{Int}(r))\neq \emptyset$, for every $n\in \mathbb{N}$ the rectangle $\rho(g_n)(r)$ is either a successor or a predecessor of some generation of $R$ (see Lemma \ref{l.npredecessor}). We now consider the following two cases.
  
\vspace{0.2cm}  
\textbf{Case 1.} There exist infinitely many $n\in \mathbb{N}$ for which $\rho(g_n)(r)$ is a predecessor of some generation of $R$. 

By eventually considering a subsequence, assume that $\rho(g_n)(r)$ is a predecessor of some generation of $R$ for every $n\in \mathbb{N}$. Using the facts that $\rho(g_n)(r)\cap R$ is a vertical subrectangle of $R$, that $\rho(g_n)([x_n,y_n]^s)\subset \rho(g_n)(r)$, and that the segments $\rho(g_n)([x_n,y_n]^s)$ converge for the Hausdorff topology to the segment $[X,Y]^s\subset R$ as $n\rightarrow +\infty$, we get that
$$\mathrm{Int}(\rho(g_n)(r))\cap \mathrm{Int}(\rho(g_m)(r))\neq \emptyset$$
for every sufficiently large $n,m\in\mathbb{N}$.

Passing to a further subsequence, assume that 
$\mathrm{Int}(\rho(g_n)(r))\cap \mathrm{Int}(\rho(g_m)(r))\neq \emptyset$ for every $n,m\in \mathbb{N}$. Thanks to the previous fact and to Lemma~\ref{l.npredecessor}, $\rho(g_n)(r)$ is either a predecessor or a successor of some generation of $\rho(g_m)(r)$ for every $n,m\in \mathbb{N}$. Recall now that, for every $N\in \mathbb{N}$, the predecessors of $R$ of generation smaller or equal to $N$ are finitely many (see Lemma~\ref{l.existenceofpredecessors}). Our previous arguments together with Remark~\ref{r.precedentsuivant} imply that, for every $m\in \mathbb{N}$, there exist infinitely many $n\in \mathbb{N}$ such that $\rho(g_n)(r)$ is a predecessor of some generation of $\rho(g_m)(r)$.

Using the previous fact, we can construct a sequence of positive integers $(n_k)_{k\in\mathbb{N}}$ such that $\rho(g_{n_{k+1}})(r)$ is a predecessor of some generation of $\rho(g_{n_k})(r)$ for every $k\in \mathbb{N}$. By the expansivity axiom, $$\bigcap_{k\in\mathbb{N}} \rho(g_{n_k})(r)$$
is an $\mathcal{F}^u$-segment. However, this is impossible, since $\rho(g_{n_k})([x_{n_k},y_{n_k}]^s)\subset \rho(g_{n_k})(r)$ for every $k\in \mathbb{N}$ and $\rho(g_{n_k})([x_{n_k},y_{n_k}]^s)$ converges to $[X,Y]^s$ as $k\to +\infty$.
  
\vspace{0.2cm}  
\textbf{Case 2.} There exist infinitely many $n\in \mathbb{N}$ for which $\rho(g_n)(r)$ is a successor of some generation of $R$.

By eventually considering a subsequence, assume that $\rho(g_n)(r)$ is a successor of some generation of $R$ for every $n\in \mathbb{N}$. By Lemma~\ref{l.existenceofpredecessors}, for every $N\in \mathbb{N}$, the successors of $R$ of generation smaller or equal to $N$ are finitely many; hence, for every $N\in \mathbb{N}$ there exists $m\in \mathbb{N}$ such that $\rho(g_m)(r)$ is a successor of $R$ of generation greater than $N$.

Consider now a point $z$ in the interior of $[x,y]^s$ and let $V\in \mathcal{R}$ be a predecessor of $r$ of generation $M\gg 0$ (see Figure \ref{f.properdiscontinuity}) such that

\begin{itemize}
\item $z$ belongs in $V$,
\item $[x,y]^s$ intersects both $\mathcal{F}^u$-boundary components of $V$,
\item $V\cap [x,y]^s$ is a small $\mathcal{F}^s$-segment contained in the interior of $[x,y]^s$.
\end{itemize}

\noindent Such a rectangle $V$ can be obtained by a repeated application of the strong finite return axiom and the expansivity axiom. 

\vspace{0.2cm}  
\textbf{Subcase 1.} There exist infinitely many $n\in \mathbb{N}$ for which $\rho(g_n)(V)\cap \mathrm{Int}(R)\neq \emptyset$.    

In this case, take $m\in \mathbb{N}$ sufficiently large so that $[x_m,y_m]^s$ intersects both $\mathcal{F}^u$-boundary components of $V$, $\rho(g_m)(V)\cap \mathrm{Int}(R)\neq \emptyset$ and $\rho(g_m)(r)$ is a successor of $R$ of generation greater than $M+1$.

\begin{figure}[h!]
    \centering
    \includegraphics[width=0.7\linewidth]{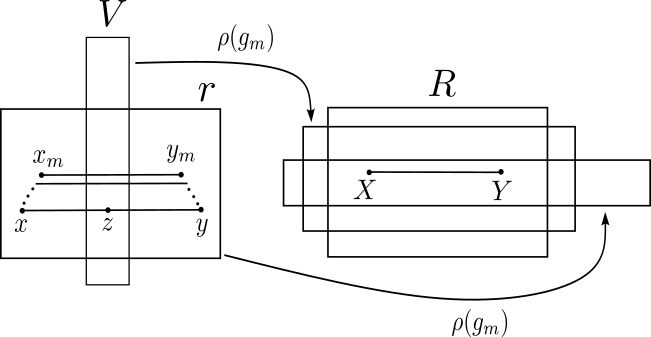}
    \caption{Under the assumptions of Subcase 1, $\rho(g_n)([x_n,y_n]^s)$ can not converge to $[X,Y]^s$.}
    \label{f.properdiscontinuity}
\end{figure}

Since $\rho(g_m)(r)$ is a successor of $R$ of generation greater than $M+1$ and $\rho(g_m)(V)$ is a predecessor of $\rho(g_m)(r)$ of generation $M$ that intersects $\mathrm{Int}(R)$, Remark~\ref{r.precedentsuivant} implies that $\rho(g_m)(V)$ is a successor of $R$ of some positive generation. Therefore, $\rho(g_m)(V\cap [x_m,y_m]^s)$ is an $\mathcal{F}^s$-segment intersecting both $\mathcal{F}^u$-boundary components of $R$. This leads to a contradiction, since $(\rho(g_n)([x_n,y_n]^s))_{n\in\mathbb{N}}$ converges for the Hausdorff topology to $[X,Y]^s\subsetneq \mathcal{F}^s(X)\cap R$ and thus, for $m$ sufficiently big, $\rho(g_m)([x_m,y_m]^s)$ can not intersect both $\mathcal{F}^u$-boundary components of $R$.

\vspace{0.2cm}  
\textbf{Subcase 2.} There exist finitely many $n\in \mathbb{N}$ for which $\rho(g_n)(V)\cap \mathrm{Int}(R)\neq \emptyset$.    

Suppose for the sake of simplicity that $\rho(g_n)(V)\cap \mathrm{Int}(R)= \emptyset$ for every $n\in \mathbb{N}$. Recall that by construction $\rho(g_n)(V)$ and $R$ are both predecessors of some generation of $\rho(g_n)(r)$. Without any loss of generality, up to taking a subsequence, we can assume that $\rho(g_n)(V)$ lies on the left of $R$ for every $n\in \mathbb{N}$, as in Figure \ref{f.properdiscontinuity2}. In this case, since $\rho(g_n)([x_n,y_n]^s)$ converges to $[X,Y]^s\subset R$, we get that $\rho(g_n)([x_n,y_n]^s\cap V)$ converges to $X$ and that $X\in \partial^uR$. 

\begin{figure}[h!]
    \centering
    \includegraphics[scale=0.5]{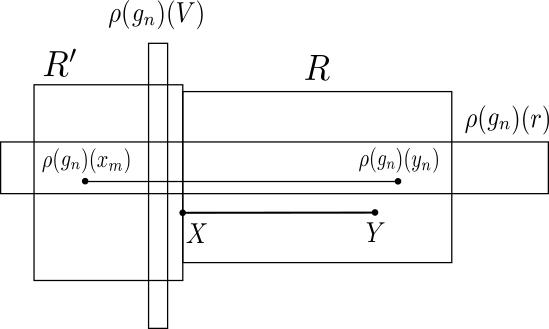}
    \caption{Under the assumptions of Subcase 2, $\rho(g_n)([x_n,y_n]^s)$ can not converge to $[X,Y]^s$.}
    \label{f.properdiscontinuity2}
\end{figure}

By the strong finite return axiom, there exists $R'\in \mathcal{R}$ such that $R'\cap \mathcal{F}^s_-(X)$ is a non-trivial $\mathcal{F}^s$-segment and for infinitely many $n\in\mathbb{N}$,
$$R'\cap \rho(g_n)([x_n,y_n]^s)\neq\emptyset,\quad
\mathrm{Int}(R')\cap \rho(g_n)(\mathrm{Int}(r))\neq\emptyset, \text{ and}\quad
\mathrm{Int}(R')\cap \rho(g_n)(\mathrm{Int}(V))\neq\emptyset$$

Passing once again to a subsequence, we may assume that the above properties hold for every $n\in\mathbb{N}$. Since the rectangles $\rho(g_n)(r)$ are successors of progressively larger generations of $R$ as $n\rightarrow+\infty$, the expansivity axiom implies that they become ``thinner'' in the $\mathcal{F}^u$-direction and ``longer'' in the $\mathcal{F}^s$-direction as $n\rightarrow+\infty$. Therefore, by the Markovian intersection axiom and Lemma~\ref{l.npredecessor}, for every sufficiently large $n$, the rectangle $\rho(g_n)(r)$ is a successor of some generation of $R'$.  

Similarly to the proof of Subcase~1, using the fact that $R'$ has only finitely many successors of generation at most $N$ (see Lemma~\ref{l.existenceofpredecessors}), we obtain that, for every $N\in\mathbb{N}$, there exists $m\in\mathbb{N}$ such that $\rho(g_m)(r)$ is a successor of $R'$ of generation greater than $N$.

Take $m\in \mathbb{N}$ sufficiently large so that $[x_m,y_m]^s$ intersects both $\mathcal{F}^u$-boundary components of $V$, $\rho(g_m)(V)\cap \mathrm{Int}(R')\neq \emptyset$ and $\rho(g_m)(r)$ is a successor of $R'$ of generation greater than $M+1$. Since $\rho(g_m)(V)$ is a predecessor of $\rho(g_m)(r)$ of generation $M$ that intersects $\mathrm{Int}(R')$, Remark~\ref{r.precedentsuivant} implies that $\rho(g_m)(V)$ is a successor of $R'$ of some positive generation. Therefore, $\rho(g_m)(V\cap [x_m,y_m]^s)$ is an $\mathcal{F}^s$-segment intersecting both $\mathcal{F}^u$-boundary components of $R'$. This leads to a contradiction, since $(\rho(g_n)([x_n,y_n]^s))_{n\in\mathbb{N}}$ converges for the Hausdorff topology to the segment $[X,Y]^s\subsetneq \mathcal{F}^s(X)\cap R$ and thus for $m$ sufficiently big $\rho(g_m)([x_m,y_m]^s)$ can not intersect both $\mathcal{F}^u$-boundary components of $R'$.

\end{proof}

\subsection{The big brother of a rectangle in $\mathcal{R}$}
\label{ss.big-brother}
Using the fact that $\mathcal{F}^s$ is orientable, for any $R\in \mathcal{R}$ we can write $\partial^u R$ as the union of two connected components, $\partial^u_- R$ and $\partial^u_+ R$, in such a way that every $\mathcal{F}^s$-segment in $R$ going from $\partial^u_- R$ to $\partial^u_+ R$ is positively oriented. We call $\partial^u_- R$ and $\partial^u_+ R$ the \emph{negative} and \emph{positive} $\mathcal{F}^u$-boundary components of $R$, respectively.

\begin{thdef}
\label{thdef.bigbrother}
    Consider any rectangle $R\in \mathcal{R}$. There exists a unique rectangle $\mathcal{B}(R)\in \mathcal{R}$ such that $R\cap \mathcal{B}(R)=\partial^u_+ R$ and which is minimal with respect to this property in the following sense:
    $$\forall R'\in \mathcal{R}\quad\quad R\cap R'=\partial^u_+ R\implies \mathcal{B}(R)\cap \mathcal{F}^u(\partial^u_+ R)\subseteq R'\cap \mathcal{F}^u(\partial^u_+ R)$$
We will call $\mathcal{B}(R)$ the \emph{big brother} of $R$. 
\end{thdef}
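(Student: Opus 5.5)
Our plan is to prove existence first and then uniqueness. For existence, we consider the set $\mathcal{A}(R)$ of all $R'\in\mathcal{R}$ such that $R\cap R'=\partial^u_+R$, and we first show that $\mathcal{A}(R)$ is non-empty.

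Let $l=\mathcal{F}^u(\partial^u_+R)$. By Theorem~\ref{t.markovianisanosovlike}, $l$ is periodic; let $p$ be its unique periodic point. By the corner condition, $p$ is either a corner of $R$ or lies outside $\partial^u_+R$.

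Consider a point $x$ in the interior of $\partial^u_+R$ and a germ of the quadrant of $x$ lying on the positive $\mathcal{F}^s$-side of $l$. By the strong finite return axiom, some rectangle $R'$ of $\mathcal{R}$ contains this germ, and we want to enlarge it so that it contains all of $\partial^u_+R$. For this we use the analogue of Lemma~\ref{l.longrectangles} for $\mathcal{F}^u$-leaves, applied to $l$ and to the closed half-plane $D$ bounded by $l$ on the side opposite to $R$. Since $\partial^u_+R$ lies in a single $\mathcal{F}^u$-separatrix of $p$ (or has $p$ as an endpoint), the lemma gives $R'\in\mathcal{R}$ containing both endpoints of $\partial^u_+R$ with $\mathrm{Int}(R')\cap D\neq\emptyset$.

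We must then check that $R'\cap R=\partial^u_+R$, i.e. that $R'$ does not enter $\mathrm{Int}(R)$. Suppose instead that $\mathrm{Int}(R')\cap\mathrm{Int}(R)\neq\emptyset$. By the Markovian intersection axiom, $R\cap R'$ is a non-trivial horizontal or vertical subrectangle of one of them. Since $R'$ crosses $l$, the leaf $l$ would meet $\mathrm{Int}(R')$. One then argues, as in the proof of Proposition~\ref{p.cornerconditionexists}, that the boundary leaf $l$ is periodic and cannot cross interiors of rectangles in this configuration: iterating by the stabilizer of $p$ would stretch $R$ and produce a non-Markovian intersection. If this argument fails, we instead replace $R'$ by a suitable successor or predecessor of $R'$ lying in $D$, using Lemma~\ref{l.existenceofpredecessors} to cut $R'$ along $l$.

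Next we obtain a minimal element. For $R'\in\mathcal{A}(R)$, the set $I(R')=R'\cap l$ is an $\mathcal{F}^u$-segment containing $\partial^u_+R$. Any two rectangles $R',R''\in\mathcal{A}(R)$ have intersecting interiors near $\partial^u_+R$ on the $D$ side, so they intersect in a Markovian way. Hence their traces on $l$ are nested: the one that is a horizontal subrectangle has $\mathcal{F}^u$-leaves contained in those of the other. The family $\{I(R')\}$ is therefore totally ordered by inclusion.

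Since every $R'\in\mathcal{A}(R)$ meets a fixed compact germ, local finiteness lets us select a minimal element $\mathcal{B}(R)$. Local finiteness here follows from Lemma~\ref{l.existenceofpredecessors}: the rectangles involved are all predecessors or successors of one another, and there are finitely many of each generation. A decreasing chain without a minimum would give an infinite sequence of successors, whose intersection is an $\mathcal{F}^s$-leaf by the expansivity axiom. This contradicts the fact that every member contains the fixed segment $\partial^u_+R$ of positive length.

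Uniqueness then follows from Remark~\ref{r.precedentsuivant}(3). If two minimal rectangles have equal traces on $l$ and intersect in interiors, the Markovian intersection axiom forces them to be comparable, and equal traces force them to coincide. The main obstacle is the existence step, namely showing that a rectangle containing $\partial^u_+R$ and extending into $D$ never penetrates $\mathrm{Int}(R)$.
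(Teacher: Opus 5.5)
Your overall architecture matches the paper's. You get a rectangle $R'\supseteq\partial^u_+R$ meeting $D$ from Lemma~\ref{l.longrectangles} and the corner condition. You get minimality from a strictly decreasing chain of successors, which contradicts the expansivity axiom. You get uniqueness from the non-triviality clause of the Markovian intersection axiom, applied to two rectangles with equal traces on $l$.

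However, the step you yourself single out as the main obstacle is not actually proved: that the rectangle $R'$ given by Lemma~\ref{l.longrectangles} does not enter $\mathrm{Int}(R)$. Your heuristic that the periodic leaf $l$ ``cannot cross interiors of rectangles'' is false in general. If $V$ is a predecessor of first generation of some $S\in\mathcal{R}$, then some component of $\partial^uV$ runs through $\mathrm{Int}(S)$, and it lies on a periodic leaf by Theorem~\ref{t.markovianisanosovlike}. Beyond that, the stabilizer-iteration argument is only gestured at, and the fallback (``if this argument fails\ldots'') is never carried out.

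The missing idea is purely combinatorial, and you were one step from it when you observed that $l$ would meet $\mathrm{Int}(R')$.
\begin{itemize}
\item Suppose $\mathrm{Int}(R')\cap\mathrm{Int}(R)\neq\emptyset$. Since $R'$ contains the whole $\mathcal{F}^u$-leaf $\partial^u_+R$ of $R$, the intersection $R\cap R'$ cannot be a non-trivial horizontal subrectangle of $R$, because it would then equal $R$.
\item So, by the Markovian intersection axiom, $R\cap R'$ is a vertical subrectangle of $R$ with $\partial^u_+R$ as one of its $\mathcal{F}^u$-boundary components. It is also a horizontal subrectangle of $R'$, hence $\partial^u_+R\subseteq\partial^uR'$.
\item Therefore $R'$ lies entirely in the closed half-plane bounded by $l$ that contains $R$. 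This contradicts $\mathrm{Int}(R')\cap D\neq\emptyset$.
\end{itemize}
This is the paper's argument, and with it your proof goes through.

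One minor point: the ``local finiteness'' you invoke in the minimality step does not hold, since a rectangle of $\mathcal{R}$ meets infinitely many others (for instance all of its successors). It is not needed, though: the expansivity argument on a strictly decreasing chain of traces, which you also give, is exactly what the paper uses.
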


\begin{proof}
  We begin by proving the existence of a rectangle $R_0\in \mathcal{R}$ for which $R\cap R_0=\partial^u_+ R$. Take $D$ to be the closure of the connected component of $\mathcal{P}-\mathcal{F}^u(\partial^u_+ R)$ for which $R\cap D= \partial^u_+ R$. Thanks to Lemma \ref{l.longrectangles} and to the fact that $\mathcal{R}$ satisfies the corner condition, we get that there exists $R_0\in \mathcal{R}$ containing $\partial^u_+ R$ and such that $\mathrm{Int}(R_0)\cap D\neq \emptyset$. By the Markovian intersection axiom, if $\mathrm{Int}(R_0)\cap \mathrm{Int}(R)\neq \emptyset$, then the fact that $\partial^u_+ R\subset R_0$ would force $R_0\cap R$ to be a vertical subrectangle of $R$, contradicting our assumption that $\mathrm{Int}(R_0)\cap D\neq \emptyset$. It follows that $R\cap R_0=\partial^u_+ R$. 

\begin{figure}[h!]
    \centering
    \includegraphics[width=0.5\linewidth]{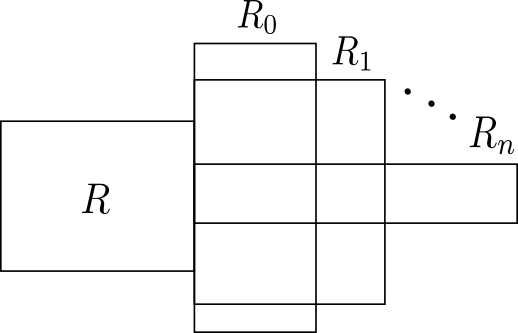}
    \caption{The non-existence of a minimal rectangle with property $(\star)$, contradicts the expansivity axiom.}
    \label{f.existencebigbrother}
\end{figure}

Next, for the sake of simplicity, and for the remainder of this proof, we say that a rectangle $r\in \mathcal{R}$ satisfies property $(\star)$ if $r\cap R=\partial^u_+ R$. Let us now prove that there exists a rectangle in $\mathcal{R}$ that is minimal for $(\star)$, in the sense of the above statement. 

Indeed, if $R_0$ is not minimal for $(\star)$, then there exists $R_1\in \mathcal{R}$ satisfying $(\star)$ and such that $ R_1\cap \mathcal{F}^u(\partial^u_+ R)\subsetneq R_0\cap \mathcal{F}^u(\partial^u_+ R)$ (see Figure \ref{f.existencebigbrother}). The previous inclusion together with the Markovian intersection axiom imply that $R_1\cap R_0$ is a horizontal subrectangle of $R_0$ and thus that $R_1$ is a successor of some positive generation of $R_0$ (see Lemma \ref{l.npredecessor}). If there does not exist a rectangle in $\mathcal{R}$ that is minimal for $(\star)$, then by repeatedly applying the previous argument, we obtain a sequence $R_0,...,R_n,...$ such that $R_{n+1}$ is a successor of some positive generation of $R_n$ and $\partial^u_+ R\subset R_n$ for every $n\in \mathbb{N}$. By the expansivity axiom, we have that $\underset{i\geq 1}{\cap}R_i$ is an $\mathcal{F}^s$-leaf of $R_0$ containing $\partial^u_+ R$, which is absurd. We deduce that there exists a rectangle in $\mathcal{R}$ that is minimal for $(\star)$.

Finally, assume that there exist two rectangles in $\mathcal{R}$, say $R'$ and $R''$, that are minimal for $(\star)$. This implies that $ R'\cap \mathcal{F}^u(\partial^u_+ R)= R''\cap \mathcal{F}^u(\partial^u_+ R)$. Thanks to the previous fact, since $R'$ and $R''$ intersect in a Markovian way, we have that either $R'\subseteq R''$ or $R''\subseteq R'$. This contradicts the Markovian intersection axiom (see Definition \ref{d.markovfamily}), according to which, up to exchanging the roles of $R'$ and $R''$, the set $R'\cap R''$ is a \emph{non-trivial} vertical subrectangle of $R'$ and a \emph{non-trivial} horizontal subrectangle of $R''$. We deduce that $R'=R''$, which finishes the proof of the desired result. 
\end{proof}

We remark at this point that the proof of Theorem-Definition~\ref{thdef.bigbrother} is the first place in this paper where we use the fact that $\mathcal{R}$ satisfies the corner condition. More precisely,

\begin{rema}\label{r.cornercondition}
Consider a strong Markovian family $\mathcal{R}'$ for $\rho$ that does not satisfy the corner condition, and fix $R\in \mathcal{R}'$ such that $\partial^u_+R$ contains a periodic point in its interior. Then the rectangle $R$ need not admit a big brother. In this case, however, one can show that there exists a unique pair of rectangles $R^{\mathcal{B}1},R^{\mathcal{B}2}$ such that
$$
R\cap (R^{\mathcal{B}1}\cup R^{\mathcal{B}2})=\partial^u_+R
$$
and such that $R^{\mathcal{B}1}$ and $R^{\mathcal{B}2}$ are minimal with respect to this property in the following sense:
$$
\forall R_1,R_2\in \mathcal{R}'\quad
R\cap (R_1\cup R_2)=\partial^u_+R
\implies
(R^{\mathcal{B}1}\cup R^{\mathcal{B}2})\cap \mathcal{F}^u(\partial^u_+R)
\subseteq
(R_1\cup R_2)\cap \mathcal{F}^u(\partial^u_+R).
$$
\end{rema}

The proof of the previous result is an adaptation of our proof of Theorem-Definition~\ref{thdef.bigbrother} and can be used to generalize Theorem~\ref{t.main} for strong Markovian families that do not satisfy the corner condition. However, such a generalization would considerably increase the combinatorial complexity of the arguments in the remainder of this paper. We will therefore restrict our attention to strong Markovian families satisfying the corner condition.

\begin{defi}
    A \emph{birectangle} is a pair $(R,\mathcal{B}(R))\in\mathcal{R}^2$, where $R\in\mathcal{R}$ and $\mathcal{B}(R)$ denotes the big brother of $R$ in the sense of Theorem-Definition~\ref{thdef.bigbrother}. We denote by $\mathcal{R}^{\mathcal{B}}$ the set of all birectangles.
\end{defi}

Since $\rho$ preserves the orientation of $\mathcal{F}^s$, it is easy to see, thanks to the uniqueness in Theorem-Definition~\ref{thdef.bigbrother}, that

\begin{rema}
\label{r.invariantbigbrothers}
    The diagonal action of $\rho$ on $\mathcal{R}^2$ leaves 
$\mathcal{R}^{\mathcal{B}}$ invariant. 
\end{rema}

Furthermore, $\mathcal{R}^{\mathcal{B}}$ inherits from $\mathcal{R}$ a Markovian-type behavior. More specifically, we have the following result, which constitutes the key argument in the proof of the cocompactness of the action of $G$ on $W^+$:

\begin{prop}
\label{p.markovianbirectangles}
    Fix $(x,y)\in W^+$. Let $L:=\mathcal{F}^s(x)=\mathcal{F}^s(y)$ and $D$ be the closure of a connected component of $\mathcal{P}-L$. We consider the following family of segments (see Figure \ref{f.case1proofprop38}): 
    \begin{align*}
        \mathcal{I}(x,y)&:=\{(R\cup \mathcal{B}(R))\cap L| ~R\in \mathcal{R}, ~x\in R-\partial^u_+R, ~ [x,y]^s\subset R\cup \mathcal{B}(R) \text{ and } R\cap \mathrm{Int}(D)\neq \emptyset\}
        \end{align*}
    
    We have that: 
    \begin{enumerate}
        \item $\mathcal{I}(x,y)  \neq \emptyset $. 
        \item The inclusion $\subseteq$ defines a total order on  $\mathcal{I}(x,y)$. In other words, for each $I, J\in \mathcal{I}(x,y)$ either $I\subseteq J$ or $J\subseteq I$.
        \item There exists a unique $I_{min}(x,y)\in \mathcal{I}(x,y)$ such that 
        $$ \forall I\in \mathcal{I}(x,y) ~~ I_{min}(x,y)\subseteq I$$
    \end{enumerate}
\end{prop}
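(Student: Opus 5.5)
The plan is to treat the three items in order, using the Markovian intersection axiom for item (2) and the expansivity axiom for item (3), in the same way as in the proof of Theorem-Definition~\ref{thdef.bigbrother}.

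\emph{Item (1).} We produce one admissible rectangle. First pick $R\in\mathcal{R}$ containing $x$ with $x\notin\partial^u_+R$ and $\mathrm{Int}(R)\cap \mathrm{Int}(D)\neq\emptyset$. This is possible by the strong finite return axiom applied to a germ of the quadrant of $x$ lying in $D$ on the positive side of $\mathcal{F}^u(x)$. If $[x,y]^s\subset R$ we are done. Otherwise $y$ lies beyond $\partial^u_+R$, and we must enlarge $R$ so that $R\cup\mathcal{B}(R)$ covers $[x,y]^s$.

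To enlarge $R$, we use Lemma~\ref{l.longrectangles}, which gives long rectangles of $\mathcal{R}$ meeting $D$ that contain any two points of $L$ on the same side of the periodic point of $L$. Suppose $[x,y]^s$ contains no periodic point in its interior. Then Lemma~\ref{l.longrectangles} gives $R'\in\mathcal{R}$ containing $x$ and a point slightly beyond $y$, with $\mathrm{Int}(R')\cap D\neq\emptyset$. Replacing $R'$ by a predecessor of large generation (expansivity axiom) whose $\mathcal{F}^s$-extent still contains $[x,y]^s$ with $x\notin\partial^u_+$ gives a valid element of $\mathcal{I}(x,y)$.

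If $[x,y]^s$ contains a periodic point $p$ in its interior, the big brother is what makes this work. Take $R$ admissible with $\partial^u_+R\subset\mathcal{F}^u(p)$, possible by the corner condition and Lemma~\ref{l.longrectangles} applied to $[x,p]$. Then $\mathcal{B}(R)$ extends across $p$. It must be checked that $\mathcal{B}(R)$ reaches $y$, and this may require first choosing $R$ thin, or iterating the construction.

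\emph{Item (2).} Let $I=(R\cup\mathcal{B}(R))\cap L$ and $J=(R'\cup\mathcal{B}(R'))\cap L$ lie in $\mathcal{I}(x,y)$. Both $R$ and $R'$ contain $x$ and meet $\mathrm{Int}(D)$, so their interiors intersect near $x$, unless $R=R'$, in which case $I=J$. By the Markovian intersection axiom we may assume $R\cap R'$ is a vertical subrectangle of $R$ and a horizontal subrectangle of $R'$. Then $R'\cap L\subseteq R\cap L$ (sharing $\mathcal{F}^u$-boundary in $L$-direction). Moreover, $\partial^u_+R'$ either lies inside $R$ or coincides with $\partial^u_+R$ on $L$.

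In the first sub-case, $\mathcal{B}(R')$ is obtained by a minimality comparison with successors, and one shows $\mathcal{B}(R')\cap L\subseteq (R\cup\mathcal{B}(R))\cap L$. In the second sub-case, $\mathcal{B}(R)$ satisfies $\mathcal{B}(R)\cap R'\supseteq\partial^u_+R'$ in the relevant sense. The minimality in Theorem-Definition~\ref{thdef.bigbrother}, combined with Markovian intersection, then yields comparability of $\mathcal{B}(R)\cap L$ and $\mathcal{B}(R')\cap L$.

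I expect this case analysis on the relative position of $\partial^u_+R$ and $\partial^u_+R'$ to be the main obstacle. In particular one must show that big brothers are compatible with vertical inclusions, and this is where the corner condition is used again.

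\emph{Item (3).} Given the total order, consider a decreasing chain in $\mathcal{I}(x,y)$ with no minimum. The underlying rectangles then give a sequence $R_n$ with $R_{n+1}$ a predecessor (or successor) of $R_n$ of positive generation; this uses the argument of item (2) together with Lemma~\ref{l.npredecessor}. A strictly decreasing sequence of intervals all containing $[x,y]^s$ forces infinitely many nested steps in the $\mathcal{F}^s$-direction. The expansivity axiom would then make $\bigcap R_n$ an $\mathcal{F}^u$-leaf, which is incompatible with each $R_n\cup\mathcal{B}(R_n)$ containing the nondegenerate segment $[x,y]^s$. Since only finitely many predecessors of bounded generation exist (Lemma~\ref{l.existenceofpredecessors}), a minimum is attained, and it is unique by antisymmetry of inclusion.
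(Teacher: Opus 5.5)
You have the right tools (Lemma~\ref{l.longrectangles} for (1), Markovian intersection plus minimality of big brothers for (2), expansivity for (3)), but each item has a genuine gap at the point where the big brother matters.

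\textbf{Item~(1).} The periodic case is left open. You observe that $\mathcal{B}(R)$ crosses $p$ but cannot show that it reaches $y$, and neither ``choosing $R$ thin'' nor ``iterating'' is turned into an argument.

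The missing idea is to combine the dynamics at $p$ with the equivariance of big brothers. Take $r'\in\mathcal{R}$ with $[x,p]^s\subset r'$ and $r'\cap\mathrm{Int}(D)\neq\emptyset$, so that $r'\cup\mathcal{B}(r')$ contains a neighbourhood of $p$ in $L$. Then take $g\in\mathrm{Stab}_\rho(p)$ acting as an expansion on $\mathcal{F}^s(p)$ (Theorem~\ref{t.markovianisanosovlike}). Since $\mathcal{B}(\rho(g^N)(r'))=\rho(g^N)(\mathcal{B}(r'))$ (Remark~\ref{r.invariantbigbrothers}), for $N$ large $\rho(g^N)(r')\cup\mathcal{B}(\rho(g^N)(r'))$ covers $[x,y]^s$. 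Moreover, $\rho(g^N)(r')$ still contains $x$ off its positive boundary and still meets $\mathrm{Int}(D)$.

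In the non-periodic case, drop the detour through predecessors. Predecessors of large generation become thin in the $\mathcal{F}^s$-direction, so they will not contain $[x,y]^s$. The rectangle given by Lemma~\ref{l.longrectangles} for $x,y$ themselves already works, since $x\notin\partial^u_+$ of it is automatic once $y$ lies in it on the positive side of $x$.

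\textbf{Item~(2).} Consider the second sub-case: $R\cap R'$ is vertical in $R$ and $\partial^u_+R,\partial^u_+R'$ lie on the same $\mathcal{F}^u$-leaf, so $\partial^u_+R\subseteq\partial^u_+R'$. Here your comparison is reversed. It is $\mathcal{B}(R')$ that satisfies $\mathcal{B}(R')\cap R=\partial^u_+R$, and hence competes in the definition of $\mathcal{B}(R)$. Minimality then gives $\partial^u_-\mathcal{B}(R)\subseteq\partial^u_-\mathcal{B}(R')$, and Markovian intersection gives $\mathcal{B}(R')\cap L\subseteq\mathcal{B}(R)\cap L$.

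Your claim that $\mathcal{B}(R)$ contains $\partial^u_+R'$ need not hold. Mere ``comparability'' of the two big-brother segments is also not enough. Both start at the point $L\cap\mathcal{F}^u(\partial^u_+R)$, and $R'\cap L\subseteq R\cap L$, so only an inclusion in this specific direction yields $J\subseteq I$.

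The first sub-case needs no minimality. $\mathcal{B}(R')$ meets $\mathrm{Int}(R)$ but not $\mathrm{Int}(R')$, which forces $R\cap\mathcal{B}(R')$ to be vertical in $R$. The corner condition plays no role in (2).

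\textbf{Item~(3).} Knowing that $\bigcap_n R_n$ is an $\mathcal{F}^u$-segment does not by itself contradict $[x,y]^s\subset R_n\cup\mathcal{B}(R_n)$, because the segment may be covered by the big brothers. You must split, along a subsequence, according to the behaviour of $\mathcal{B}(R_n)$:
\begin{itemize}
\item If $\mathcal{B}(R_{n+1})\cap R_n$ is vertical in $R_n$ for all $n$, then $[x,y]^s\subset R_n$ for all $n$, contradicting expansivity.
\item If the $\mathcal{B}(R_n)$ are strictly nested vertically, apply expansivity to them as well.
\item If $\mathcal{B}(R_n)=\mathcal{B}(R_0)$ for all $n$, you get $[x,y]^s\subset\mathcal{B}(R_0)$, so $x\in\mathcal{B}(R_0)\cap R_0=\partial^u_+R_0$.
\end{itemize}
The contradiction in the last case comes precisely from the requirement $x\in R-\partial^u_+R$ in the definition of $\mathcal{I}(x,y)$, which your argument never uses.
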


\begin{proof}[Proof of Item (1)]
   
   If $[x,y]^s$ contains no periodic point of $\rho$ in its interior, then by Lemma \ref{l.longrectangles} there exists $r\in \mathcal{R}$ such that $r\cap \mathrm{Int}(D)\neq \emptyset$ and $[x,y]^s\subset r$. Clearly, the segment $(r\cup \mathcal{B}(r))\cap L $ belongs in $\mathcal{I}(x,y)$. 
   
   Assume now that there exists $p$ a periodic point of $\rho$ in the interior of $[x,y]^s$ (see Figure \ref{f.existencebirectangle}). Once again by Lemma \ref{l.longrectangles}, there exists $r'\in \mathcal{R}$ such that $r'\cap \mathrm{Int}(D)\neq \emptyset$ and $[x,p]^s\subset r'$. Notice that $r'\cup \mathcal{B}(r')$ contains a neighborhood of $p$ in $L$. Consider now $g\in \text{Stab}_{\rho}(p)$ such that $\rho(g)$ acts as an expansion on $\mathcal{F}^s(p)$ (see Theorem \ref{t.markovianisanosovlike}). By Remark \ref{r.invariantbigbrothers}, for any $N\in \mathbb{N}$ $$\mathcal{B}(\rho(g^N)(r'))=\rho(g^N)(\mathcal{B}(r')).$$ Hence, for $N$ sufficiently big, we have that $[x,y]^s\subset \rho(g^N)(r')\cup \mathcal{B}(\rho(g^N)(r'))$. We deduce that the segment $\big(\rho(g^N)(r')\cup \mathcal{B}(\rho(g^N)(r'))\big)\cap L$ belongs in $\mathcal{I}(x,y)$. 

\begin{figure}[h!]
    \centering
    \includegraphics[scale=0.65]{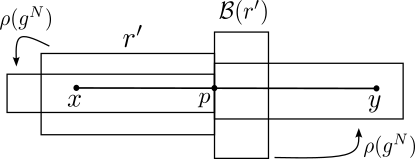}
    \caption{The set $\mathcal{I}(x,y)$ is non-empty.}
    \label{f.existencebirectangle}
\end{figure}
\end{proof}

\begin{proof}[Proof of Item (2)]
   
   Consider $I, J\in \mathcal{I}(x,y)$ with $I\neq J$. Recall that by definition, $[x,y]^s\subset I\cap J$. Fix $R,S\in \mathcal{R}$ such that $(R\cup \mathcal{B}(R))\cap L=I$, $(S\cup \mathcal{B}(S))\cap L=J$, $R\cap \mathrm{Int}(D)\neq \emptyset$, $S\cap \mathrm{Int}(D)\neq \emptyset$ and $x\in (R-\partial_+^uR) \cap (S-\partial_+^uS)$. 
   
   The above properties imply that $\mathrm{Int}(R)\cap \mathrm{Int}(S)\neq \emptyset$. Thanks to the previous fact and the Markovian intersection property, after possibly exchanging the roles of $R$ and $S$, we may assume without loss of generality that $R\cap S$ is a non-trivial vertical subrectangle of $R$. We now proceed by considering two cases.

\vspace{0.2cm}
\textbf{Case 1.} $\mathcal{F}^u(\partial^u_+R)\neq \mathcal{F}^u(\partial^u_+S)$

In this case, as illustrated in Figure~\ref{f.case1proofprop38}, we have that
$$\mathrm{Int}(\mathcal{B}(S))\cap \mathrm{Int}(R)\neq \emptyset.$$
By the Markovian intersection axiom and the fact that $S$ and $\mathcal{B}(S)$ have disjoint interiors, it follows that $R\cap \mathcal{B}(S)$ is also a non-trivial vertical subrectangle of $R$. Consequently, thanks to Proposition \ref{p.propertiesoffoliformarkovianactions}, $$ J= (S\cup \mathcal{B}(S))\cap L\subset R\cap L\subset (R\cup \mathcal{B}(R))\cap L=I$$
which proves the desired result. 

\begin{figure}[h!]
    \centering
    \includegraphics[scale=0.7]{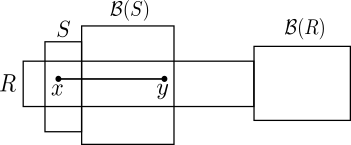}
    \caption{The configuration of $R$, $S$, and their big brothers in the case where $\mathcal{F}^u(\partial^u_+R)\neq \mathcal{F}^u(\partial^u_+S)$.}
    \label{f.case1proofprop38}
\end{figure}

\vspace{0.2cm}
\textbf{Case 2.} $\mathcal{F}^u(\partial^u_+R)= \mathcal{F}^u(\partial^u_+S)$

 Since $R\cap S$ is a vertical subrectangle of $R$, in this case we have that $\partial^u_+R\subseteq \partial^u_+S$ (see Figure \ref{f.case2proofprop38}). Thanks to the previous fact and to the definition of a big brother, $\partial^u_-\mathcal{B}(R)\subseteq \partial^u_-\mathcal{B}(S)$. Therefore, by the Markovian intersection property, either $\mathcal{B}(R)=\mathcal{B}(S)$ or  $\mathcal{B}(R)\cap \mathcal{B}(S)$ is a non-trivial vertical subrectangle of $\mathcal{B}(R)$. In both of the previous cases we have that $$ S\cap L\subset R\cap L \text{ and } \mathcal{B}(S)\cap L\subseteq \mathcal{B}(R)\cap L$$ which implies that $J\subset I$. 

\begin{figure}[h!]
    \centering
    \includegraphics[scale=0.65]{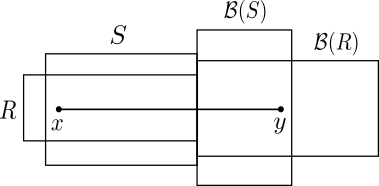}
    \caption{The configuration of $R$, $S$, and their big brothers in the case where $\mathcal{F}^u(\partial^u_+R)= \mathcal{F}^u(\partial^u_+S)$.}
    \label{f.case2proofprop38}
\end{figure}

\end{proof}

\begin{proof}[Proof of Item (3)]

Assume by contradiction that $\mathcal{I}(x,y)$ does not contain a minimal element. Since $\mathcal{I}(x,y)\neq \emptyset$, this implies that there exists $(I_n)_{n\in \mathbb{N}}$ a sequence of segments in $\mathcal{I}(x,y)$
such that $I_{n+1}\subsetneq I_n$ for every $n\in \mathbb{N}$.

By definition, $[x,y]^s\subset I_n$ for every $n\in \mathbb{N}$. Consider now $R_n\in \mathcal{R}$ such that $(R_n\cup \mathcal{B}(R_n))\cap L=I_n$, $R_n\cap \mathrm{Int}(D)\neq \emptyset$ and $x\in R_n-\partial^u_+R_n$ for every $n\in \mathbb{N}$. Thanks to the previous properties, we have that $\mathrm{Int}(R_n)\cap \mathrm{Int}(R_m)\neq \emptyset$ for every $n,m\in \mathbb{N}$. Moreover, using the fact that $I_{n+1}\subsetneq I_n$ and by repeating the argument used in the proof of Item~(2), we get that for every $n,m\in \mathbb{N}$ with $m>n$, exactly one of the following holds:

\begin{itemize}
    \item $R_n\cap R_m$ and $\mathcal{B}(R_m)\cap R_n$ are both non-trivial vertical subrectangles of $R_n$ 
    \item $R_n\cap R_m$ is a non-trivial vertical subrectangle of $R_n$, $\mathcal{F}^u(\partial^u_+R_n)= \mathcal{F}^u(\partial^u_+R_m)$ and $\mathcal{B}(R_m)= \mathcal{B}(R_n)$
    \item $R_n\cap R_m$ is a non-trivial vertical subrectangle of $R_n$, $\mathcal{F}^u(\partial^u_+R_n)= \mathcal{F}^u(\partial^u_+R_m)$ and $\mathcal{B}(R_m)\cap \mathcal{B}(R_n)$ is a non-trivial vertical subrectangle of $\mathcal{B}(R_n)$
\end{itemize}

According to which of the above scenarios occurs infinitely many times, we distinguish three cases.

\vspace{0.2cm}
\textbf{Case 1.} Passing to a subsequence if necessary, $R_{n+1}\cap R_n$ and $\mathcal{B}(R_{n+1})\cap R_n$ are non-trivial vertical subrectangles of $R_n$ for every $n\in \mathbb{N}$. 

In this case, it is easy to see that $I_{n+1}\subset R_n$ and thus that $[x,y]^s\subset R_n$ for every $n\in \mathbb{N}$. However, as $R_{n+1}\cap R_n$ is a non-trivial vertical subrectangle of $R_n$ for every $n\in \mathbb{N}$, by the expansivity axiom, $\underset{n\geq 0}{\cap}R_n$ is an $\mathcal{F}^u$-segment of $R_0$ containing $[x,y]^s$, which is absurd. 

\vspace{0.2cm}
\textbf{Case 2.} Passing to a subsequence if necessary, $R_{n+1}\cap R_n$ is a non-trivial vertical subrectangle of $R_n$, $\mathcal{F}^u(\partial^u_+R_{n+1})= \mathcal{F}^u(\partial^u_+R_n)$ and $\mathcal{B}(R_{n+1})= \mathcal{B}(R_n)$ for every $n\in \mathbb{N}$. 

In this case, by the expansivity axiom, $\underset{n\geq 0}{\cap}R_n$ is an $\mathcal{F}^u$-segment of $R_0$. Using the previous fact together with
$$[x,y]^s\subset \underset{n\geq 0}{\cap}(R_n\cup \mathcal{B}(R_n))
=\underset{n\geq 0}{\cap}R_n\cup \mathcal{B}(R_0),$$
we deduce that $[x,y]^s\subset \mathcal{B}(R_0)$. This contradicts the assumption that $x\in R_0-\partial^u_+R_0$.

\vspace{0.2cm}
\textbf{Case 3.} Passing to a subsequence if necessary, $R_{n+1}\cap R_n$ is a non-trivial vertical subrectangle of $R_n$, $\mathcal{F}^u(\partial^u_+R_{n+1})= \mathcal{F}^u(\partial^u_+R_n)$ and $\mathcal{B}(R_{n+1})\cap \mathcal{B}(R_n)$ is a non-trivial vertical subrectangle of $\mathcal{B}(R_n)$ for every $n\in \mathbb{N}$. 

Similarly to the two previous cases, by the expansivity axiom, $\underset{n\geq 0}{\cap}R_n$ is an $\mathcal{F}^u$-segment of $R_0$ and $\underset{n\geq 0}{\cap}\mathcal{B}(R_n)$ is an $\mathcal{F}^u$-segment of $\mathcal{B}(R_0)$. This implies that for $n$ sufficiently big $[x,y]^s\not \subset R_n\cup \mathcal{B}(R_n)$, which is absurd. 

\end{proof}
We conclude this section with the following result, which is a direct consequence of the proof of Proposition~\ref{p.markovianbirectangles}.

\begin{prop}
\label{p.comparebirectangles}
    Fix $(x,y)\in W^+$. Let $L:=\mathcal{F}^s(x)=\mathcal{F}^s(y)$, $D$ be the closure of a connected component of $\mathcal{P}-L$ and $\mathcal{I}(x,y)$ be the family of segments defined in Proposition \ref{p.markovianbirectangles}. 

    Consider $I,J\in \mathcal{I}(x,y)$ and $R,S\in \mathcal{R}$ such that $I=(R\cup \mathcal{B}(R))\cap L$ and $J=(S\cup \mathcal{B}(S))\cap L$. If $R\cap S$ is a non-trivial vertical subrectangle of $R$, then $J\subsetneq I$. 
\end{prop}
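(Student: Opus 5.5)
We follow the proof of Item (2) of Proposition \ref{p.markovianbirectangles}, now assuming that $R\cap S$ is a non-trivial vertical subrectangle of $R$. We will show first that $J\subseteq I$, and then that the inclusion is strict. The hypotheses on $R,S$ are those of $\mathcal{I}(x,y)$: $x\in (R-\partial^u_+R)\cap(S-\partial^u_+S)$, both rectangles meet $\mathrm{Int}(D)$, and $[x,y]^s\subset I\cap J$. We split into the same two cases.

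\textbf{Case 1: $\mathcal{F}^u(\partial^u_+R)\neq\mathcal{F}^u(\partial^u_+S)$.} As in Item (2), $\mathrm{Int}(\mathcal{B}(S))\cap\mathrm{Int}(R)\neq\emptyset$, so $R\cap\mathcal{B}(S)$ is a non-trivial vertical subrectangle of $R$. Proposition \ref{p.propertiesoffoliformarkovianactions} then gives
$$J\subset R\cap L\subseteq I.$$
For strictness, note that $I$ also contains $\mathcal{B}(R)\cap L$. This is a non-trivial $\mathcal{F}^s$-segment beyond $\partial^u_+R$: it is non-trivial because $\mathcal{B}(R)$ is a rectangle with $R\cap\mathcal{B}(R)=\partial^u_+R$, and $\mathcal{B}(R)$ meets $L$ because $L$ crosses $R$ from $\partial^u_-R$ to $\partial^u_+R$ and $\mathcal{B}(R)\supset\partial^u_+R$. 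Since $J\subset R\cap L$, the points of $\mathcal{B}(R)\cap L$ other than $\partial^u_+R\cap L$ lie in $I-J$. Hence $J\subsetneq I$.

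\textbf{Case 2: $\mathcal{F}^u(\partial^u_+R)=\mathcal{F}^u(\partial^u_+S)$.} Here $\partial^u_+R\subseteq\partial^u_+S$ and $\partial^u_-\mathcal{B}(R)\subseteq\partial^u_-\mathcal{B}(S)$. We get $\mathcal{B}(S)\cap L\subseteq\mathcal{B}(R)\cap L$, either because $\mathcal{B}(R)=\mathcal{B}(S)$ or because $\mathcal{B}(R)\cap\mathcal{B}(S)$ is a vertical subrectangle of $\mathcal{B}(R)$. The key point is on the negative side: $R\cap S$ is a non-trivial vertical subrectangle of $R$ sharing the positive $\mathcal{F}^u$-boundary. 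So $\partial^u_-S$ is a leaf of $R$ strictly inside $\mathrm{Int}(R)$ or on its negative side, and $\partial^u_-S\cap L$ lies strictly after $\partial^u_-R\cap L$ along $L$. Therefore $S\cap L\subsetneq R\cap L$, and the point $\partial^u_-R\cap L$ belongs to $I-J$. We must check that this point is not recovered through $\mathcal{B}(S)$. It is not, because $\mathcal{B}(S)$ lies on the positive side of $\mathcal{F}^u(\partial^u_+S)$, whereas this point lies strictly on the negative side. Hence $J\subsetneq I$.

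\textbf{Main obstacle.} The delicate step is justifying that "non-trivial vertical subrectangle sharing a positive side" forces the negative $\mathcal{F}^u$-boundaries to differ. This needs the Markovian intersection axiom: if $R\cap S=R$, then $R\subseteq S$, which is excluded for distinct rectangles. It also needs $R\neq S$, which follows since otherwise $R\cap S=R$ would be trivial. In Case 1, one must also confirm that $\mathcal{B}(R)\cap L$ is non-degenerate, which follows from the definition of rectangles and the fact that $L$ meets $\partial^u_+R$.
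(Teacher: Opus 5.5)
Your proposal is correct and follows the paper's route. The paper derives this statement directly from the two-case argument in the proof of Item (2) of Proposition \ref{p.markovianbirectangles}, and your strictness checks are exactly the details left implicit there: the overhang $\mathcal{B}(R)\cap L$ beyond $\partial^u_+R$ in Case 1, and in Case 2 the point $\partial^u_-R\cap L$, which lies in neither $S$ nor $\mathcal{B}(S)$.

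One small slip in Case 2: the alternative ``or on its negative side'' is impossible, since it would force $R\cap S=R$, and should be dropped. What you actually need is that $\partial^u_-S\cap R$ is an $\mathcal{F}^u$-leaf of $R$ different from $\partial^u_-R$, and this follows from non-triviality because $\partial^u_+(R\cap S)=\partial^u_+R$.
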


\subsection{Cocompactness of the action $\tilde\rho$}
\label{ss.cocompactness}

\begin{defi}
    Consider $I$ an $\mathcal{F}^{s}$-segment (resp. $\mathcal{F}^u$-segment) and $R\in \mathcal{R}$. We will say that $I$ \emph{crosses} $R$ if $R\cap I$ is an $\mathcal{F}^{s}$-leaf (resp. $\mathcal{F}^u$-leaf) of $R$. 
\end{defi}

\begin{prop}\label{p.actionboundedfund}
The action $\widetilde{\rho}$ of $G$ on $W^+$ admits a bounded fundamental domain. 
\end{prop}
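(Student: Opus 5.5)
The plan is to build a compact set $K\subset W^+$ whose $\widetilde{\rho}$-translates cover $W^+$; a bounded fundamental domain can then be extracted from $K$ in the usual way. The set $K$ is assembled from finitely many birectangles. By the finiteness axiom and Remark~\ref{r.invariantbigbrothers}, $\mathcal{R}^{\mathcal{B}}$ consists of finitely many $\rho$-orbits. Fix representatives $(R_1,\mathcal{B}(R_1)),\dots,(R_k,\mathcal{B}(R_k))$ of these orbits. For each $i$, let $K_i\subset W^+$ be the set of pairs $(x,y)$ such that:
\begin{itemize}
\item $x\in R_i-\partial^u_+R_i$;
\item $y\in \mathcal{F}^s_+(x)-\{x\}$;
\item $[x,y]^s\subset R_i\cup\mathcal{B}(R_i)$;
\item the birectangle $(R_i,\mathcal{B}(R_i))$ realizes $I_{min}(x,y)$ for one of the two sides $D$ of $L=\mathcal{F}^s(x)$.
\end{itemize}
Set $K=\bigcup_i K_i$.

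The first step is surjectivity of $G\cdot K$. Take any $(x,y)\in W^+$ and choose a side $D$ of $L$. Proposition~\ref{p.markovianbirectangles} gives a minimal element $I_{min}(x,y)$, realized by some $R$ with $x\in R-\partial^u_+R$, $R\cap \mathrm{Int}(D)\neq\emptyset$ and $[x,y]^s\subset R\cup\mathcal{B}(R)$. Write $R=\rho(g)(R_i)$. Then $\mathcal{B}(R)=\rho(g)(\mathcal{B}(R_i))$ by Remark~\ref{r.invariantbigbrothers}. Because $\rho$ preserves the foliations and their orientations, the family $\mathcal{I}$ is equivariant, so $\widetilde\rho(g^{-1})(x,y)\in K_i$.

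The second step, which I expect to be the main obstacle, is showing that $K$ is relatively compact in $W^+$. Boundedness is clear, since $K_i$ lies in $(R_i\cup\mathcal{B}(R_i))^2$. The real difficulty is keeping $K$ away from the diagonal: we need $y$ to stay uniformly far from $x$.

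- \textbf{Mechanism.} Minimality should force the segment $[x,y]^s$ to be "long" relative to $R_i$. Suppose a sequence $(x_n,y_n)\in K_i$ had $y_n-x_n\to 0$, with limit $x$ in $R_i$.
- \textbf{First attempt.} Using the strong finite return axiom at $x$ (on the side $D$), together with the expansivity axiom, I would look for a rectangle $S$ that is a successor-type subrectangle meeting $R_i$ along a non-trivial vertical subrectangle. This $S$ should satisfy $x_n\in S-\partial^u_+S$ and $[x_n,y_n]^s\subset S\cup\mathcal{B}(S)$ for $n$ large. Proposition~\ref{p.comparebirectangles} would then give a strictly smaller element of $\mathcal{I}(x_n,y_n)$, contradicting minimality.
- \textbf{Delicate cases.}
  - $x$ lies on $\partial^u R_i$. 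Here we must use $\mathcal{B}(S)$, not just $S$, and the corner condition controls the configuration.
  - $x$ is a periodic point.
  - The minimality is realized on the other side $D$.

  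I would handle these cases by choosing $S$ among the finitely many predecessors of $R_i$ of some bounded generation (Lemma~\ref{l.existenceofpredecessors}). These predecessors cover $R_i$, and their $\mathcal{F}^u$-boundaries give a uniform lower bound on the length of $[x,y]^s$.

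It then remains to check closedness, or at least that the closure of $K$ stays in $W^+$. This follows from the uniform separation just established together with continuity of the projection $F$ from Proposition~\ref{p.wplusisasimplyconnectedmanifold}. Together with Proposition~\ref{p.actionproperdisc}, this yields a compact (hence bounded) fundamental domain, and therefore the closed quotient manifold $M$.
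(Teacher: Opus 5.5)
Your proposal is correct and matches the paper's proof. The surjectivity step is the same: pull back the birectangle realizing $I_{min}(x,y)$ to an orbit representative. Your compactness mechanism is the paper's key step: take $V$ to be the first-generation predecessor of $R_i$ containing $x$ and meeting $[x,y]^s$; then $[x,y]^s\subset V\cup\mathcal{B}(V)$ would, via Proposition~\ref{p.comparebirectangles}, contradict the minimality of $I_{min}(x,y)$. The paper makes this direct instead of arguing by sequences: it defines the compact set $\mathfrak{D}(R_i)$ by the closed condition that $[x,y]^s$ crosses $V$ or $\mathcal{B}(V)$ for some first-generation predecessor $V$ of $R_i$, so your ``delicate cases'' (boundary points, periodic points, the choice of side $D$) need no separate treatment.
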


\begin{proof}
 Consider a finite collection $R_1,\dots,R_n$ of rectangles consisting of a representative of each $\rho$-orbit of rectangles in the Markovian family $\mathcal{R}$. For every $i\in \llbracket 1, n\rrbracket$ we define 
\begin{align*}
\mathfrak{D}(R_i):=\{(x,&y) \in W^+ \mid x,y\in R_i\cup \mathcal{B}(R_i) \text{ and there exists $V$ a predecessor of $1$-st generation }\\ & \text{of $R_i$ such that } [x,y]^s \text{ crosses either } V \text{ or } \mathcal{B}(V)\}.
\end{align*}
  

 Let $\mathfrak{D}:= \underset{i\in \llbracket 1, n\rrbracket}{\cup} \mathfrak{D}(R_i)$. Using the fact that $R_i$ admits finitely many predecessors of first generation (see Lemma \ref{l.existenceofpredecessors}), it is easy to see that the set $\mathfrak{D}$ is compact in $W^+$. We claim that $\mathfrak{D}$ contains a fundamental domain for $\widetilde{\rho}$ or in other words that for every $(x,y)\in W^+$ there exists $g\in G$ such that $(\rho(g)(x),\rho(g)(y))\in \mathfrak{D}$. 

Indeed, fix $(x,y)\in W^+$, $D$ a connected component of $\mathcal{P}-\mathcal{F}^s(x)$ and consider 
\begin{align*}
       \mathcal{I}(x,y)&:=\{(R\cup \mathcal{B}(R))\cap \mathcal{F}^s(x)| ~R\in \mathcal{R}, ~x\in R-\partial^u_+R, ~ [x,y]^s\subset R\cup \mathcal{B}(R) \text{ and } R\cap \mathrm{Int}(D)\neq \emptyset\}
\end{align*}
By Proposition~\ref{p.markovianbirectangles}, $\mathcal{I}(x,y)$ is non-empty, is totally ordered by the inclusion relation and admits a unique minimal element, denoted by $I_{\min}(x,y)$.  

Fix $R\in \mathcal{R}$ such that $x\in R-\partial^u_+R$, $I_{\min}(x,y)= (R\cup \mathcal{B}(R))\cap \mathcal{F}^s(x)$, and $R\cap \mathrm{Int}(D)\neq \emptyset$. By definition of $R_1,...,R_n$, there exists $i\in \llbracket 1, n\rrbracket$ and $g\in G$ such that $\rho(g)(R)=R_i$. Moreover, by Remark \ref{r.invariantbigbrothers}, $[\rho(g)(x),\rho(g)(y)]^s\subset R_i\cup \mathcal{B}(R_i)$. We will show that $(\rho(g)(x),\rho(g)(y))\in \mathfrak{D}(R_i)$. 

Assume for the sake of simplicity that $g=e$ and thus that $R=R_i$. Denote by $V$ the unique predecessor of first generation of $R_i$ that contains $x$ and intersects $[x,y]^s$ non-trivially (see Lemma \ref{l.existenceofpredecessors}). If $[x,y]^s$ crosses $V$ or $\mathcal{B}(V)$, then, by the definition of $\mathfrak{D}(R_i)$, we have that $(x,y)\in \mathfrak{D}(R_i)$. If not, then by our choice of $V$, we have $[x,y]^s\subset V\cup \mathcal{B}(V)$. Using this fact, together with the facts that $x\in V-\partial^u_+V$ and $V\cap \mathrm{Int}(D)\neq \emptyset$, we get that $(V\cup \mathcal{B}(V))\cap \mathcal{F}^s(x)\in \mathcal{I}(x,y)$. However, by Proposition \ref{p.comparebirectangles} and the fact that $V$ is a predecessor of first generation of $R_i$, $$(V\cup \mathcal{B}(V))\cap \mathcal{F}^s(x)\subsetneq (R_i\cup \mathcal{B}(R_i))\cap \mathcal{F}^s(x)=I_{\min}(x,y)$$ which contradicts the minimality of $I_{\min}(x,y)$ and leads to an absurdity. 

\end{proof}
During the previous proof, we constructed a compact set $\mathfrak{D}$ containing a fundamental domain of $\widetilde{\rho}$. Although the precise definition of $\mathfrak{D}$ will not be important in the sequel of this paper, we will repeatedly use the following fact: 

\begin{obse}
\label{obs.fundamentaldomain}
    Fix $R_1,...,R_n$ a set of representatives of every $\rho$-orbit of rectangles inside $\mathcal{R}$. There exists a compact set $\mathfrak{D}=\mathfrak{D}(R_1,\ldots,R_n)\subset W^+$ such that, for every $(x,y)\in W^+$, if $D$ is a connected component of $\mathcal{P}-\mathcal{F}^s(x)$, $(R,\mathcal{B}(R))$ is the ``smallest'' birectangle containing $[x,y]^s$ and intersecting $D$ non-trivially (in the sense of Proposition~\ref{p.markovianbirectangles}), and $g\in G$ satisfies $\rho(g)(R)\in\{R_1,\ldots,R_n\}$, then 
$$  \widetilde{\rho}(g).(x,y)\in \mathfrak{D}$$ 
\end{obse}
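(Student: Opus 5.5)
The observation packages the proof of Proposition~\ref{p.actionboundedfund}, so the plan is to reuse that argument with the set $\mathfrak{D}:=\bigcup_{i}\mathfrak{D}(R_i)$ defined there.

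\textbf{Compactness of $\mathfrak{D}$.} Each $R_i\cup\mathcal{B}(R_i)$ is compact, and $R_i$ has finitely many predecessors of first generation $V$ by Lemma~\ref{l.existenceofpredecessors}. Moreover, the condition ``$[x,y]^s$ crosses $V$ or $\mathcal{B}(V)$'' is closed, and it forces the $\mathcal{F}^s$-length of $[x,y]^s$ to be bounded below by the width of $V$ or of $\mathcal{B}(V)$. Hence $\mathfrak{D}(R_i)$ stays away from the diagonal of $\mathcal{P}^2$. So $\mathfrak{D}(R_i)$ is a closed subset of the compact set $(R_i\cup\mathcal{B}(R_i))^2$ lying inside $W^+$, hence compact, and the finite union $\mathfrak{D}$ is compact.

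\textbf{Reduction to a representative.} Fix $(x,y)$ and $D$, and let $R$ be the rectangle realizing $I_{\min}(x,y)$ given by Proposition~\ref{p.markovianbirectangles}. Choose $g$ with $\rho(g)(R)=R_i$. By Remark~\ref{r.invariantbigbrothers}, $\rho(g)$ sends birectangles to birectangles and preserves inclusions and the side $D$. Therefore $\rho(g)$ carries $\mathcal{I}(x,y)$ bijectively onto $\mathcal{I}(\rho(g)(x),\rho(g)(y))$, taken with respect to the side $\rho(g)(D)$, and it sends minimal elements to minimal elements. So we may assume $g=e$ and $R=R_i$.

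\textbf{Main step.} Let $V$ be the predecessor of first generation of $R_i$ containing $x$ and meeting $[x,y]^s$ non-trivially; it exists by the covering property in Lemma~\ref{l.existenceofpredecessors}. If $[x,y]^s$ crosses $V$ or $\mathcal{B}(V)$, then $(x,y)\in\mathfrak{D}(R_i)$ and we are done. Otherwise $[x,y]^s\subset V\cup\mathcal{B}(V)$. Since $V\cap R_i$ is a vertical subrectangle of $R_i$, we also have $x\in V-\partial^u_+V$ and $V\cap\mathrm{Int}(D)\neq\emptyset$, so $(V\cup\mathcal{B}(V))\cap\mathcal{F}^s(x)\in\mathcal{I}(x,y)$. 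Proposition~\ref{p.comparebirectangles} then gives a strict inclusion of this segment into $I_{\min}(x,y)$, contradicting minimality.

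\textbf{Expected difficulty.} The delicate point is the claim that ``not crossing'' forces $[x,y]^s\subset V\cup\mathcal{B}(V)$. This requires that $y$ not lie beyond $\partial^u_+\mathcal{B}(V)$ without crossing $\mathcal{B}(V)$. The argument uses that $\mathcal{B}(V)$ is glued exactly along $\partial^u_+V$, so that $V\cup\mathcal{B}(V)$ meets $\mathcal{F}^s(x)$ in a single interval starting at the leaf of $x$ in $V$.
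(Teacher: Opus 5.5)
Your proposal is correct and is essentially the paper's own argument. The Observation only records the proof of Proposition~\ref{p.actionboundedfund}, and you reuse the same compact set $\mathfrak{D}=\bigcup_i\mathfrak{D}(R_i)$, the same reduction to $g=e$ via Remark~\ref{r.invariantbigbrothers}, and the same contradiction with the minimality of $I_{\min}(x,y)$ obtained from the first-generation predecessor $V$ and Proposition~\ref{p.comparebirectangles}. Your arguments for compactness (staying away from the diagonal) and for why not crossing $V$ or $\mathcal{B}(V)$ forces $[x,y]^s\subset V\cup\mathcal{B}(V)$ are correct; they fill in steps the paper states without proof.
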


\section{From a fully orientable strong Markovian action to an Anosov flow}\label{s.fullyorientflow}

In the previous sections, we constructed, thanks to the action $\rho$ and the foliation $\mathcal{F}^s$, a simply connected 3-manifold $W^+$ homeomorphic to $\mathbb{R}^3$ endowed with an orientation-preserving action $\widetilde{\rho}$ of $G$ by homeomorphisms. We also proved that the action $\widetilde{\rho}$ on $W^+$ is properly discontinuous and admits a bounded fundamental domain; hence, the quotient of $W^+$ by $\widetilde{\rho}$ defines a closed orientable three manifold, which we will denote by $M$. This implies in particular that $G\cong \pi_1(M)$ is a 3-manifold group.

Denote by $\pi:W^+\rightarrow \mathcal{P}$ the projection map defined by $$ \forall (x,y) \in W^+ ~~ \pi((x,y ))=x$$ Recall that $W^+$ is endowed with the subspace topology induced by the product topology on $\mathcal{P}^2$; hence, $\pi$ is continuous. Moreover, by repeating the argument in the proof of Proposition~\ref{p.wplusisasimplyconnectedmanifold}, $\pi$ defines a trivial line bundle over $\mathcal{P}$ whose fibers are precisely the sets $\pi^{-1}(x)$, where $x\in\mathcal{P}$. 

Consider $\mathcal{F}$ the orientable one dimensional foliation in $W^+$ defined by $$\mathcal{F}:=\{\pi^{-1}(x)|x\in \mathcal{P}\}=\big\{\{(x,y)|y\in \mathcal{F}^s_+(x)-\{x\}\}|x\in \mathcal{P}\big\}$$ Since $\rho$ is fully orientable, we have that $\widetilde{\rho}$ preserves $\mathcal{F}$; hence, $\mathcal{F}$ descends to an orientable one dimensional foliation on $M$. Thanks to Theorem 27A of \cite{Whitney}, the previous foliation can be parametrized by a non-singular flow $\Phi$. 

Denote by $\widetilde{\Phi}$ the lift of $\Phi$ on $W^+$. By reversing the orientation of the flow $\Phi$ if necessary, assume that for every $( x,y )\in W^+$ and every $t>0$, if $\widetilde{\Phi}^t((x,y) )= ( x, y(t))$, then $[x,y]^s\subset [x,y(t)]^s$. In other words, by identifying $W^+$ with the space of positively oriented $\mathcal{F}^s$-segments in $\mathcal{P}$, we assume that the flow $\widetilde{\Phi}$ makes the $\mathcal{F}^s$-segments bigger in the future and smaller in the past.

\begin{prop}
\label{p.pseudoanosov}
    The flow $\Phi$ is a topological Anosov flow.
\end{prop}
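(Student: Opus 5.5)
We will check the three axioms of Definition~\ref{d.anosovflow} by first building the invariant foliations on $W^+$ and then passing to $M$. On $W^+$ we define two $2$-dimensional foliations. The leaf of $\widetilde{F}^s$ through $(x,y)$ is the set of pairs $(x',y')$ with $x'\in\mathcal{F}^s(x)$ and $y'\in\mathcal{F}^s(x)$; it is a single open half-plane of positively oriented segments contained in one stable leaf. The leaf of $\widetilde{F}^u$ through $(x,y)$ is $\pi^{-1}(\mathcal{F}^u(x))$. The latter is a topological plane because $\pi$ is a trivial line bundle. Both foliations are unions of $\widetilde{\Phi}$-orbits. They are transverse and intersect exactly along the orbits, since $\widetilde F^s(x,y)\cap\widetilde F^u(x,y)=\pi^{-1}(x)$ by Proposition~\ref{p.propertiesoffoliformarkovianactions}(4). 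Their leaves and holonomies are $C^0$, because in local product charts of $(\mathcal{P},\mathcal{F}^s,\mathcal{F}^u)$ and the trivialization $F$ of Proposition~\ref{p.wplusisasimplyconnectedmanifold} they are products. Both foliations are $\widetilde\rho$-invariant, since $\rho$ preserves $\mathcal{F}^s$ and $\mathcal{F}^u$, so they descend to $F^s,F^u$ on $M$. This gives Item (1). It also follows that the orbit space of $\widetilde\Phi$ is $\mathcal{P}$ via $\pi$, with induced foliations $\mathcal{F}^s,\mathcal{F}^u$. This is needed later for Theorem~\ref{t.main}.

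For Item (2) we reason geometrically in $W^+$. Two points on the same $\widetilde F^s$-leaf are $(x,y)$ and $(x',y')$ with $x,x'$ on a common stable leaf $L$. After reparametrizing, we may assume $y=y'$ far along $L$, so that $[x,y]^s$ and $[x',y]^s$ differ only near their origins. Flowing forward makes $y$ run off to the end of $L$. The claim is that the distance in $M$ tends to $0$. By Observation~\ref{obs.fundamentaldomain}, $(x,y(t))$ is brought into the compact set $\mathfrak{D}$ by some $g_t\in G$, where $\rho(g_t)$ maps the smallest birectangle containing $[x,y(t)]^s$ to one of the finitely many $R_i$. As $t\to+\infty$ these smallest birectangles become successors of higher and higher generation. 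Using Lemma~\ref{l.npredecessor} and the expansivity axiom, the images under $\rho(g_t)$ of the points $x$ and $x'$, which stay in a bounded stable subsegment, converge together inside $R_i\cup\mathcal{B}(R_i)$. Compactness of $\mathfrak{D}$ together with uniform continuity of the covering projection then give $d\to0$. The unstable case is analogous. Two points of the same $\widetilde F^u$-leaf are $(x,y)$ and $(x',y')$ with $x'\in\mathcal{F}^u(x)$, and we choose $y'$ as the holonomy image of $y$. In backward time the segments shrink to small neighbourhoods of their origins. The normalizing elements $g_t$ now act on the smallest birectangles, which are predecessors of increasing generation, and these are thin in the $\mathcal{F}^s$ direction but long in the $\mathcal{F}^u$ direction. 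By the expansivity axiom, this forces the renormalized $\mathcal{F}^u$-segment $[x,x']^u$ to shrink. The increasing homeomorphism $h$ is obtained by matching the parameters so that the two segments keep comparable lengths.

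Item (3), expansivity, is the step I expect to be the main obstacle. The difficulty is to relate small distance in $M$ along whole orbits to the combinatorics of $\mathcal{R}$. We argue by contradiction. Suppose $(x,y)$ and $(x',y')$ lie in a common $\widetilde F^s$-leaf on different orbits, and their orbits stay $\eta$-close in $M$ for all time. Lifting to $W^+$, we obtain for every $t$ an element $g_t$ sending both points into a small neighbourhood of $\mathfrak{D}$. We choose $\eta$ smaller than a Lebesgue number for the finite cover of $\mathfrak{D}$ by the birectangles $(R_i,\mathcal{B}(R_i))$ and their first generation predecessors. Then the two segments share the same ``smallest birectangle'' from Proposition~\ref{p.markovianbirectangles} at every time. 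Tracking this birectangle in backward time produces a bi-infinite chain of rectangles, successive predecessors of each other, all containing both $x$ and $x'$. By the expansivity axiom of Definition~\ref{d.markovfamily}, the intersection of this chain is a single $\mathcal{F}^u$-leaf of a rectangle. Since $x$ and $x'$ also lie on one stable leaf, Proposition~\ref{p.propertiesoffoliformarkovianactions}(4) forces $x=x'$. The same argument with successors handles the unstable case. The delicate points are the choice of $\eta$ uniformly via compactness of $\mathfrak{D}$, and the bound $\epsilon$ on the time shift, which comes from the compactness of the fibres of $\mathfrak{D}$.
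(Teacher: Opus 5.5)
Your Item (1) agrees with the paper. Your Item (2) is a workable alternative route. The paper argues by contradiction: if the normalized distance did not tend to $0$, one would get $\widetilde\rho(g(t_n)).(x_1,x_2)\to(X,Z)\in W^+$ with pairwise distinct $g(t_n)$, contradicting Proposition~\ref{p.actionproperdisc}. Your direct version needs two extra ingredients. First, a uniform statement that generation-$n$ predecessors of the finitely many $R_i$ have stable width tending to $0$; this is a K\"onig-type consequence of the expansivity axiom and Lemma~\ref{l.existenceofpredecessors}. Second, some care when $[x,x']^s$ crosses $\mathcal{F}^u(\partial^u_+R(t))$ into the big brothers. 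The contradiction argument avoids both.

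There is a genuine gap in Item (3), at the claim that, for $\eta$ below a Lebesgue number, the two segments \emph{share the same smallest birectangle at every time}. The assignment $(x,y)\mapsto I_{\min}(x,y)$ of Proposition~\ref{p.markovianbirectangles} is not locally constant. The defining conditions $x\in R-\partial^u_+R$ and $[x,y]^s\subset R\cup\mathcal{B}(R)$ are not open conditions. For instance, if $y\in\partial^u_+\mathcal{B}(R_i)$, an arbitrarily small push of the endpoint changes the minimal birectangle, and the arbitrary reparametrization $f$ allows exactly such pushes; the same happens near the unstable boundaries of first-generation predecessors. Moreover, a Lebesgue number exists only for open covers, whereas your birectangles are closed sets, and they are subsets of $\mathcal{P}$, not of $W^+$. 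More fundamentally, the claim is the conclusion in disguise. Once $[x,y(t)]^s$ is much shorter than $[x,x']^s$, no birectangle that is minimal for $(x,y(t))$ can contain $x'$ in its first rectangle. So asserting that both segments share it for all negative times already asserts $x=x'$, and the chain-of-predecessors step has nothing underneath it.

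The missing idea is that one well-chosen time suffices, with no combinatorics along the orbit.
\begin{itemize}
\item Assume $x'\in\mathcal{F}^s_+(x)-\{x\}$ and take $t$ so negative that $x'\notin[x,y(t)]^s$. Pick $g_t$ with $\widetilde\rho(g_t).(x,y(t))\in\mathfrak{D}$.
\item Since $\mathfrak{D}\subset W^+$ is compact, the points $\rho(g_t)(x)$ and $\rho(g_t)(y(t))$ stay uniformly apart. Meanwhile $\rho(g_t)(x')$ lies on the same stable leaf, beyond $\rho(g_t)(y(t))$.
\item Hence $\rho(g_t)(x')$ cannot be close to $\rho(g_t)(x)$. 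In a limit, the stable segment joining them, and with it $\rho(g_t)(y(t))$, would collapse to a point by Proposition~\ref{p.propertiesoffoliformarkovianactions}(5), contradicting $X\neq Y$.
\item This contradicts the $\eta$-closeness of the two orbits at that time. Running the argument on sequences $(\widetilde{z_n},\widetilde{w_n})$ with $\epsilon'_n\to0$ and extracting limits in $\mathfrak{D}$ gives the uniform constant.
\end{itemize}
This is exactly the paper's proof of expansivity.
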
 

\begin{proof}
    
We will verify that $\Phi$ satisfies the conditions of Definition~\ref{d.anosovflow}. First, notice that by definition, the flow $\Phi$ is not singular.

\vspace{0.3cm}
\textit{Existence of two transverse codimension-one foliations invariant under $\Phi$}

Using the fact that $W^+\overset{\pi}{\longrightarrow} \mathcal{P}$ defines a trivial line bundle over $\mathcal{P}$, the fibers of which are identified with the leaves of $\mathcal{F}$, we obtain that  $\widetilde{F^s}:=\pi^{-1}(\mathcal{F}^s)$ and $\widetilde{F^u}:=\pi^{-1}(\mathcal{F}^u)$ define two transverse codimension one foliations in $W^+$. Moreover, as the action of $\rho$ preserves $\mathcal{F}^s$ and $\mathcal{F}^u$, the action of $\widetilde{\rho}$ preserves both $\widetilde{F^s}$ and $\widetilde{F^u}$; hence $\widetilde{F^s}$ and $\widetilde{F^u}$ project to two transverse codimension one foliations in $M$. Denote the previous foliations by $F^s$ and $F^u$ respectively. Finally, by construction, the leaves of $\widetilde{F^s}$ and $\widetilde{F^u}$ are $\mathcal{F}$-saturated. This implies that $F^s$ and $F^u$ are invariant by $\Phi$ and thus that $\Phi$ satisfies Item (1) of Definition \ref{d.anosovflow}. 

\vspace{0.3cm}
\textit{Contraction along $F^s$ and $F^u$}

Using our choice of orientation of $\Phi$, we will now prove that $F^s$ and $F^u$ define respectively a stable and an unstable foliation for $\Phi$. 

Endow $M$ with a smooth structure and a distance $d$ given by some Riemannian metric. Consider $\widetilde{d}$ the lift of $d$ on $W^+$. The action $\widetilde{\rho}$ corresponds to the action of $\pi_1(M)$ on $W^+$ by deck transformations; hence, $\widetilde{\rho}$ preserves $\widetilde{d}$. 

Fix $z_1,z_2\in M$ such that $z_2\in F^s(z_1)$. We would like to find $h\in \text{Homeo}_+(\mathbb{R})$ an increasing homeomorphism such that 
    $d(\Phi^t(z_1),\Phi^{h(t)}(z_2))\underset{t\rightarrow +\infty}{\longrightarrow}0$. Consider $\widetilde{z_1}$ (resp. $\widetilde{z_2}$) a lift of $z_1$ (resp. $z_2$) on $W^+$ such that $\widetilde{z_2}\in \widetilde{F^s}(z_1)$. It suffices to find $h\in \text{Homeo}_+(\mathbb{R})$ an increasing homeomorphism such that 
   \begin{equation}\label{eq.goal}
        \widetilde{d}(\widetilde{\Phi}^{t}(\widetilde{z_1}),\widetilde{\Phi}^{h(t)}(\widetilde{z_2}))\underset{t\rightarrow +\infty}{\longrightarrow}0
    \end{equation} 

Consider first the case where $\widetilde{z_1}$ and $\widetilde{z_2}$ belong to the same $\widetilde{\Phi}$-orbit. In this case, the existence of a homeomorphism $h$ satisfying (\ref{eq.goal}) is obvious. Assume from now on that $\widetilde{z_1}$ and $\widetilde{z_2}$ belong to different $\widetilde{\Phi}$-orbits. By the definition of $\widetilde{F^s}$, up to exchanging the roles of $\widetilde{z_1}$ and $\widetilde{z_2}$, we can write $$\widetilde{z_1}=(x_1,y_1)\in W^+ \text{ and } \widetilde{z_2}=(x_2,y_2)\in W^+$$ where $x_1,x_2,y_1,y_2\in \mathcal{P}$ and $x_2\in\mathcal{F}^s_+(x_1)-\{x_1\}$. 

Fix $D$ the closure of a connected component of $\mathcal{P}-\mathcal{F}^s(x_1)=\mathcal{P}-\mathcal{F}^s(x_2)$. For every $t\in \mathbb{R}$, let $y_1(t)$ and $y_2(t)$ denote the unique points of $\mathcal{F}^s(x_1)=\mathcal{F}^s(x_2)$ for which $$\widetilde{\Phi}^{t}(( x_1,y_1))= ( x_1, y_1(t))$$
$$\widetilde{\Phi}^{t}(( x_2,y_2))= ( x_2, y_2(t))$$

Consider $h:\mathbb{R}\to\mathbb{R}$ an increasing homeomorphism such that $y_1(t)=y_2(h(t))$ for all sufficiently large $t>0$. In what follows, we show that $$\lim_{t\rightarrow +\infty}\widetilde{d}\big(\widetilde{\Phi}^{t}(( x_1,y_1)),\widetilde{\Phi}^{h(t)}(( x_2,y_2))\big)=\lim_{t\rightarrow +\infty}\widetilde{d}\big(( x_1,y_1(t)),( x_2,y_1(t))\big)=0$$

Let $R_1,...,R_N$ be a set of representatives of every $\rho$-orbit of rectangles in $\mathcal{R}$ and $\mathfrak{D}=\mathfrak{D}(R_1,...,R_N)$ be the compact set in $W^+$ given by Observation \ref{obs.fundamentaldomain}. For every $t\in \mathbb{R}$ we define the family of segments
\vspace{0.2cm}
$$\mathcal{I}(x_1, y_1(t)):=\{(R\cup \mathcal{B}(R))\cap \mathcal{F}^s(x_1)|~ R\in \mathcal{R}, ~x_1\in R-\partial^u_+R, ~[x_1,y_1(t)]^s\subset R\cup \mathcal{B}(R) \text{ and } R\cap \mathrm{Int}(D)\neq \emptyset\}$$

By Proposition~\ref{p.markovianbirectangles}, for every $t\in \mathbb{R}$, the set $\mathcal{I}(x_1,y_1(t))$ is non-empty, is totally ordered by the inclusion relation and admits a unique minimal element, denoted by $I_{\min}(x_1,y_1(t))$. Consider $R(t)\in \mathcal{R}$ such that $x_1\in R(t)-\partial^u_+R(t)$, $R(t)\cap \mathrm{Int}(D)\neq \emptyset$ and $$I_{\min}(x_1,y_1(t))=\big(R(t)\cup \mathcal{B}(R(t))\big)\cap \mathcal{F}^s(x_1)$$ Consider also $g(t)\in G$ such that $\rho(g(t))(R(t))\in \{R_1,...,R_N\}$.  Thanks to Observation \ref{obs.fundamentaldomain}, $\widetilde{\rho}(g(t)).(x_1,y_1(t))\in \mathfrak{D}$ for every $t\in \mathbb{R}$. Moreover, since the action $\widetilde{\rho}$ preserves $\widetilde{d}$, for every sufficiently large $t>0$ we have that 
\begin{equation*}
   \begin{aligned}
    \widetilde{d}\big(\widetilde{\Phi}^{t}(( x_1,y_1)),\widetilde{\Phi}^{h(t)}(( x_2,y_2))\big)&=\widetilde{d}\big(( x_1,y_1(t)), ( x_2,y_1(t))\big)\\&=\widetilde{d}\big(\widetilde{\rho}(g(t)).( x_1,y_1(t)), ~\widetilde{\rho}(g(t)).( x_2,y_1(t))\big)
\end{aligned} 
\end{equation*}

Suppose now, by contradiction, that there exists $\epsilon>0$ and a sequence  $(t_n)_{n\in\mathbb{N}}$ with $t_n\underset{n\rightarrow +\infty}{\longrightarrow}+\infty$ such that \begin{equation}\label{eq.absurdhypo}
\widetilde{d}\big(\widetilde{\rho}(g(t_n)).( x_1,y_1(t_n)), ~\widetilde{\rho}(g(t_n)).( x_2,y_1(t_n))\big)>\epsilon
\end{equation}

Recall that, thanks to our choice of orientation of $\widetilde{\Phi}$, we have that $[x_1,y_1(t)]^s\subsetneq [x_1,y_1(t')]^s$ for every $t'> t$. Thanks to the previous fact and the fact that $x_2\in\mathcal{F}^s_+(x_1)$, after possibly passing to a subsequence, we may assume that $x_2\in[x_1,y_1(t_n)]^s$ for every $n\in\mathbb{N}$.

Next, since $\mathfrak{D}$ is compact and $\widetilde{\rho}(g(t_n)).(x_1,y_1(t_n))\in \mathfrak{D}$ for every $n\in\mathbb{N}$, by passing to a further subsequence if necessary, we may assume that there exists $(X,Y)\in\mathfrak{D}$ such that
$$\widetilde{\rho}(g(t_n)).(x_1,y_1(t_n))\underset{n\to\infty}{\longrightarrow} (X,Y)$$ The previous fact implies that the $\mathcal{F}^s$-segments $[\rho(g(t_n))(x_1),\rho(g(t_n))(y_1(t_n))]^s$ converge for the Hausdorff topology to $[X,Y]^s$. 

Finally, using the fact that $x_2\in [x_1,y_1(t_n)]^s$ for every $n\in\mathbb{N}$ and, after possibly extracting another subsequence, we obtain $Z\in [X,Y]^s$ such that
$$\rho(g(t_n))(x_2)\underset{n\to\infty}{\longrightarrow} Z$$
Notice that $Z\neq X$, thanks to the Inequality \eqref{eq.absurdhypo}.

Recall now that by definition, $x_1\in R(t)-\partial^u_+R(t)$ and $R(t)\cap \mathrm{Int}(D)\neq \emptyset$ for every $t\in \mathbb{R}$. It follows that $\mathrm{Int}(R(t'))\cap \mathrm{Int}(R(t))\neq \emptyset$ for every $t'>t$ and thus, by Lemma \ref{l.npredecessor}, $R(t')$ is either a predecessor or a successor of some generation of $R(t)$. If $t'$ is sufficiently bigger than $t$, then thanks to our choice of orientation of $\widetilde{\Phi}$ and to our definition of $R(t)$, $$\big(R(t)\cup \mathcal{B}(R(t))\big)\cap \mathcal{F}^s(x_1)\subsetneq [x_1,y_1(t')]^s \subseteq \big(R(t')\cup \mathcal{B}(R(t'))\big)\cap \mathcal{F}^s(x_1)$$ By Proposition \ref{p.comparebirectangles} (applied for $\mathcal{I}(x_1, y_1(t))$), we deduce that for any $t'$  sufficiently bigger than $t$, $R(t')$ is a successor of some positive  generation of $R(t)$. Thanks to the previous fact and the inclusion $\rho(g(t_n))(R(t_n))\in \{R_1,\ldots,R_N\}$ for every $n\in\mathbb{N}$, after passing once again to a subsequence, we may assume that there exists $i\in\llbracket 1,N\rrbracket$ such that $\rho(g(t_n))(R(t_n))=R_i$ and that $R(t_{n+1})$ is a successor of some positive generation of $R(t_n)$ for every $n\in\mathbb{N}$. By Item (1) of Theorem \ref{t.markovianisanosovlike}, we deduce that $\rho(g(t_m))\neq \rho(g(t_n))$ for every $n\neq m$. 

Our previous arguments imply that $\widetilde{\rho}(g(t_n)).(x_1,x_2) \underset{n\rightarrow +\infty}{\longrightarrow} (X,Z)\in W^+$ and that the sequence $(g(t_n))_{n\in\mathbb{N}}$ consists of pairwise distinct elements of $G$. This contradicts the proper discontinuity of $\widetilde{\rho}$ and leads to an absurdity.

\vspace{0.3cm}
\textit{Expansiveness along $F^s$ and $F^u$}

Thanks to our previous arguments, $\Phi$ is non-singular and satisfies Items (1) and (2) of Definition \ref{d.anosovflow}. To prove that $\Phi$ is a topological Anosov flow, it suffices to prove that $\Phi$ satisfies Item (3) of Definition \ref{d.anosovflow}. In order to do that, we will show that there exist $ \epsilon>0$ and $T>0$ such that for any $\widetilde{z}\in W^+$, $\widetilde{w}\in \widetilde{F^s}(\widetilde{z})$ and any increasing  homeomorphism $f:\mathbb{R}\rightarrow \mathbb{R}$ with $f(0)=0$ 
\begin{equation}\label{eq.statementstable}
    \forall t\in \mathbb{R}~~ \widetilde{d}(\widetilde{\Phi}^t(\widetilde{z}),\widetilde{\Phi}^{f(t)}(\widetilde{w}))<\epsilon \implies \exists |t_0|<T ~~ \widetilde{w}=\widetilde{\Phi}^{t_0}(\widetilde{z})
\end{equation} 

The analogue of the above statement for $\widetilde{F^u}$ follows from a similar argument. We therefore restrict our attention to proving \eqref{eq.statementstable}. The ensuing proof follows the same strategy as the one used to show that $(\Phi^t)_{t>0}$ acts as a contraction in the direction of $F^s$.

Fix $\widetilde{z}\in W^+$ and $\widetilde{w}\in \widetilde{F^s}(z)$. Consider first the case where $\widetilde{z}$ and $\widetilde{w}$ belong to the same $\widetilde{\Phi}$-orbit. In this case, since $\widetilde{\Phi}$ is non-singular and its orbits coincide with the fibers of the trivial line bundle $W^+\overset{\pi}{\longrightarrow}\mathcal{P}$, and since the action of $\widetilde{\rho}$ on $W^+$ admits a bounded fundamental domain and preserves both $\widetilde{d}$ and $\widetilde{\Phi}$, it is not difficult to see that there exist constants $\epsilon>0$ and $T>0$ such that
$$ \forall t\in \mathbb{R}~~ \text{ if }|t|>T, \text{ then } ~~ \forall \widetilde{x}\in W^+~~ \widetilde{d}(\widetilde{x}, \widetilde{\Phi}^t(\widetilde{x}))>\epsilon$$ The proof of (\ref{eq.statementstable}) when $\widetilde{z}$ and $\widetilde{w}$ belong to the same $\widetilde{\Phi}$-orbit is an immediate consequence of the above fact. It remains to show that

\textbf{Claim.} There exists $\epsilon'>0$ such that, for every $f\in\operatorname{Homeo}_+(\mathbb{R})$ and every pair of points $\widetilde{z}\in W^+$ and $\widetilde{w}\in\widetilde{F^s}(\widetilde{z})$ belonging to distinct $\widetilde{\Phi}$-orbits, we have $$\max_{t\in \mathbb{R}}\big(\widetilde{d}(\widetilde{\Phi}^{t}(\widetilde{z}),\widetilde{\Phi}^{f(t)}(\widetilde{w}))\big)>\epsilon'$$

Assume that the previous claim does not hold. Then, there exist $(f_n)_{n\in \mathbb{N}}$ a sequence of increasing homeomorphisms of $\mathbb{R}$, $(\epsilon'_n)_{n\in \mathbb{N}}$ a sequence of strictly positive real numbers with $\epsilon'_n\underset{n\rightarrow +\infty}{\longrightarrow}0$, and $(\widetilde{z_n})_{n\in \mathbb{N}}, (\widetilde{w_n})_{n\in \mathbb{N}}$ two sequences of points in $W^+$ such that $\widetilde{w_n}\in \widetilde{F^s}(\widetilde{z_n})$, $\widetilde{z_n}\notin \underset{t\in \mathbb{R}}{\bigcup}\widetilde{\Phi}^t(\widetilde{w_n})$ and $$\forall n\in \mathbb{N} ~ \forall t\in \mathbb{R}~~ \widetilde{d}\big(\widetilde{\Phi}^t(\widetilde{z_n}), \widetilde{\Phi}^{f_n(t)}(\widetilde{w_n}))\big)<\epsilon'_n$$ 

As in our previous arguments, up to exchanging the roles of $\widetilde{z_n}$ and $\widetilde{w_n}$ and passing to a subsequence, we can write $$\widetilde{z_n}=(x^1_n,y^1_n)\in W^+ \text{ and } \widetilde{w_n}=(x^2_n,y^2_n)\in W^+$$ where $ x_n^1,y_n^1,  x_n^2,y_n^2\in \mathcal{P}$ and $x_n^2\in \mathcal{F}_+^s(x_n^1)-\{x_n^1\}$.

Fix $D_n$ the closure of a connected component of $\mathcal{P}-\mathcal{F}^s(x^1_n)=\mathcal{P}-\mathcal{F}^s(x^2_n)$. For every $t\in \mathbb{R}$, let $y_n^1(t)$ and $y^2_n(t)$ denote the unique points of $\mathcal{F}^s(x^1_n)=\mathcal{F}^s(x^2_n)$ for which $$\widetilde{\Phi}^{t}(( x^1_n,y^1_n))= ( x^1_n, y^1_n(t))$$
$$\widetilde{\Phi}^{t}(( x^2_n,y^2_n))= ( x^2_n, y^2_n(t))$$

Let $R_1,...,R_N$ be a set of representatives of every $\rho$-orbit of rectangles in $\mathcal{R}$ and $\mathfrak{D}=\mathfrak{D}(R_1,...,R_N)$ be   the compact set in $W^+$ given by Observation \ref{obs.fundamentaldomain}. For every $t\in \mathbb{R}$ and every $n\in \mathbb{N}$ we define the family of segments
\vspace{0.2cm}
$$\mathcal{I}(x^1_n, y^1_n(t)):=\{(R\cup \mathcal{B}(R))\cap \mathcal{F}^s(x^1_n)|~ R\in \mathcal{R}, ~x^1_n\in R-\partial^u_+R, ~[x^1_n,y^1_n(t)]^s\subset R\cup \mathcal{B}(R) \text{ and } R\cap \mathrm{Int}(D_n)\neq \emptyset\}$$

By Proposition~\ref{p.markovianbirectangles}, for every $t\in \mathbb{R}$ and $n\in \mathbb{N}$, the set $\mathcal{I}(x^1_n,y^1_n(t))$ is non-empty, is totally ordered by the inclusion relation and admits a unique minimal element, denoted by $I_{\min}(x^1_n,y^1_n(t))$. Consider $R_n(t)\in \mathcal{R}$ such that $x_n^1\in R_n(t)-\partial_+^uR_n(t)$, $ R_n(t)\cap \mathrm{Int}(D_n)\neq \emptyset$ and $$I_{\min}(x^1_n,y^1_n(t))=\big(R_n(t)\cup \mathcal{B}(R_n(t))\big)\cap \mathcal{F}^s(x^1_n)$$ Consider also $g_n(t)\in G$ such that $\rho(g_n(t))(R_n(t))\in \{R_1,...,R_N\}$. Thanks to Observation \ref{obs.fundamentaldomain}, we have that $\widetilde{\rho}(g_n(t)).(x^1_n,y^1_n(t))\in \mathfrak{D}$. Moreover, since the action $\widetilde{\rho}$ preserves $\widetilde{d}$, 
\begin{equation*}
   \begin{aligned}
    \widetilde{d}\big(\widetilde{\Phi}^{t}(( x^1_n,y^1_n)),\widetilde{\Phi}^{f(t)}(( x^2_n,y^2_n))\big)&=\widetilde{d}\big(( x^1_n,y^1_n(t)), ( x^2_n,y^2_n(f_n(t)))\big)\\&=\widetilde{d}\big(\widetilde{\rho}(g_n(t)).( x^1_n,y^1_n(t)), ~\widetilde{\rho}(g_n(t)).( x^2_n,y^2_n(f_n(t)))\big)<\epsilon'_n
\end{aligned} 
\end{equation*}

Recall now that, thanks to our choice of orientation of $\widetilde{\Phi}$, we have that $[x^1_n,y^1_n(t)]^s\subsetneq [x^1_n,y^1_n(t')]^s$ for every $t'>t$. Using the previous fact together with the assumption that $x^2_n\in \mathcal{F}^s_+(x^1_n)-\{x^1_n\}$, we obtain $t_n\in \mathbb{R}$ such that $x^2_n\notin [x^1_n,y^1_n(t_n)]^s$ for every $n\in\mathbb{N}$. 

Next, since $\mathfrak{D}$ is compact and $\widetilde{\rho}(g(t_n)).(x^1_n,y^1_n(t_n))$ (resp. $\widetilde{\rho}(g(t_n)).(x^2_n,y^2_n(f_n(t_n)))$) belongs in $ \mathfrak{D}$ (resp. in the $\epsilon'_n$-neighborhood of $\mathfrak{D}$) for every $n\in\mathbb{N}$, by passing to a further subsequence if necessary, we may assume that there exists $(X,Y)\in \mathfrak{D}$ such that
$$\widetilde{\rho}(g(t_n)).(x^1_n,y^1_n(t_n))\underset{n\to\infty}{\longrightarrow} (X,Y)$$
$$\widetilde{\rho}(g(t_n)).(x^2_n,y^2_n(f_n(t_n)))\underset{n\to\infty}{\longrightarrow} (X,Y)$$

In particular, the sequences $(\rho(g(t_n))(x^1_n))_{n\in\mathbb{N}}$ and  $(\rho(g(t_n))(x^2_n))_{n\in\mathbb{N}}$ converge to the same point $X\in \mathcal{P}$ when $n\rightarrow +\infty$. However, since $x_n^2\in \mathcal{F}^s_+(x_n^1)$ and $x^2_n\notin [x^1_n,y^1_n(t_n)]^s$ for every $n\in\mathbb{N}$, the sequence $(\rho(g(t_n))(x^2_n))_{n\in\mathbb{N}}$ can not converge to a point of $[X,Y]^s$, except maybe $Y$. This is absurd. 

\end{proof}

Summarizing the results proven in this section, we have that 

\begin{itemize}
    \item $\Phi$ is a topological Anosov flow. 
    \item The orbits of the lift of $\Phi$ on the universal cover of $M$, namely $W^+$, define a trivial line bundle over $\mathcal{P}$. Therefore, $\mathcal{P}$ coincides with the orbit space of $\Phi$.
    \item The lifts of $F^s$ and $F^u$, the stable and unstable foliations of $\Phi$, define on $W^+$ two foliations by planes, whose projections on $\mathcal{P}$ coincide with $\mathcal{F}^s$ and $\mathcal{F}^u$ respectively. We deduce that $(\mathcal{P},\mathcal{F}^s,\mathcal{F}^u)$ coincides with the bifoliated plane of $\Phi$.
    \item The action of $\pi_1(M)$ on $W^+$ by deck transformations coincides with $\widetilde{\rho}$. Therefore, thanks to our construction of $\widetilde{\rho}$, the action of $\pi_1(M)$ on the bifoliated plane of $\Phi$ coincides with $\rho$.
\end{itemize}

As a consequence of the previous results, to complete the proof of Theorem~\ref{t.main} in the case of fully orientable strong Markovian actions, it suffices to show that $\mathcal{R}$ is the projection on $\mathcal{P}$ of the lift on $W^+$ of a reduced Markov partition of $\Phi$. The next section is devoted to the proof of this fact.

\section{From a fully orientable strong Markovian action to a Markov partition}\label{s.fullyorientmarkovpartition}

The main goal of this section consists in proving the following proposition, which finishes the proof of Theorem \ref{t.main} in the case of fully orientable strong Markovian actions. Recall that $\pi:W^+\rightarrow \mathcal{P}$ denotes the projection map defined by $\pi((x,y))=x$ for every $(x,y)\in W^+$. 

\begin{prop}\label{p.fullyorientablemarkov}
    There exists $\widehat{\mathcal{R}}$ a $\widetilde{\rho}$-invariant collection of rectangles in $W^+$ such that 
    
    \begin{itemize}
        \item $\widehat{\mathcal{R}}$ is a lift of $\mathcal{R}$ on $W^+$ for which each rectangle of $\mathcal{R}$ admits a unique lift in $\widehat{\mathcal{R}}$.
        \item The projection of $\widehat{\mathcal{R}}$ on $M$ is a reduced Markov partition of the topological Anosov flow $\Phi$.
    \end{itemize}
\end{prop}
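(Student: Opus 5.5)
The plan is to lift each rectangle of $\mathcal{R}$ as a graph over itself, using the big brother to choose the second coordinate. For $R\in\mathcal{R}$ and $x\in R$, let $\beta_R(x)$ be the unique point where $\mathcal{F}^s(x)$ meets $\partial^u_+\mathcal{B}(R)$. It exists and is unique by Proposition~\ref{p.propertiesoffoliformarkovianactions}, because $R\cap\mathcal{B}(R)=\partial^u_+R$ and $\mathcal{B}(R)$ is trivially bifoliated. Since $\beta_R(x)\in\mathcal{F}^s_+(x)-\{x\}$, the set
$$\widehat{R}:=\{(x,\beta_R(x))\mid x\in R\}\subset W^+$$
is well defined. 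It is the image of a continuous section of the trivial line bundle $\pi:W^+\to\mathcal{P}$ over $R$. Hence $\widehat R$ is an embedded closed disk, transverse to the orbits of $\widetilde\Phi$ (the $\pi$-fibres), and $\pi$ carries the bifoliation of $\widehat R$ by $\widetilde{F^s},\widetilde{F^u}$ to that of $R$. So $\widehat R$ is a transverse rectangle of $\widetilde\Phi$ with $\pi(\widehat R)=R$. Put $\widehat{\mathcal{R}}:=\{\widehat R\mid R\in\mathcal{R}\}$. Remark~\ref{r.invariantbigbrothers} gives $\beta_{\rho(g)(R)}=\rho(g)\circ\beta_R\circ\rho(g)^{-1}$, so $\widetilde\rho(g)(\widehat R)=\widehat{\rho(g)(R)}$. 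Thus $\widehat{\mathcal R}$ is $\widetilde\rho$-invariant and each $R$ has exactly one lift in it.

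Next I would show the projection to $M$ is a finite family of embedded rectangles with pairwise disjoint interiors. Finiteness follows from the finiteness axiom. The key point is that distinct $\widehat R,\widehat S$ have disjoint interiors in $W^+$. By the Markovian intersection axiom we may assume $R\cap S$ is a non-trivial vertical subrectangle of $R$. Fix $x\in\mathrm{Int}(R)\cap\mathrm{Int}(S)$, let $L=\mathcal{F}^s(x)$, and compare the segments $(R\cup\mathcal{B}(R))\cap L$ and $(S\cup\mathcal{B}(S))\cap L$. The case analysis in the proof of Proposition~\ref{p.markovianbirectangles} (Cases 1 and 2 of Item~(2)) shows that the positive endpoint of the second lies strictly inside the first, so $\beta_S(x)\neq\beta_R(x)$. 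Near $\partial^u_+$, this uses the corner condition, as in Theorem-Definition~\ref{thdef.bigbrother}.

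Applied to $S=\rho(g)(R)$ with $g\neq e$, the same argument gives injectivity of the projection of $\widehat R$ to $M$ on interiors. Item~(1) of Theorem~\ref{t.markovianisanosovlike} says no non-trivial element stabilises $R$, and this excludes self-overlaps. This step, together with the Markov axiom below, is the main obstacle. It requires careful bookkeeping of how the positive endpoints of birectangles are ordered along $L$ when $\mathcal{F}^u(\partial^u_+R)=\mathcal{F}^u(\partial^u_+S)$.

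For the bounded return time, the orbit of $(x,y)$ is $\{(x,y')\}$ with $y'$ increasing in the future. It meets $\widehat R$ exactly when $y'=\beta_R(x)$. I would reuse the argument of Proposition~\ref{p.actionboundedfund} and Observation~\ref{obs.fundamentaldomain}. The minimal birectangle $I_{\min}(x,y)$ shows that the forward orbit reaches the lift of that birectangle's base rectangle. A compactness argument on $\mathfrak D$, in the style of the contraction proof in Proposition~\ref{p.pseudoanosov}, then bounds the time uniformly by some $T$.

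For Item~(3) of Definition~\ref{d.markovpartition}, take $(x,\beta_R(x))$ with $x\in\partial^sR$ and flow it forward, enlarging $y$. Suppose it entered $\mathrm{Int}(\widehat S)$. Then $x\in\mathrm{Int}(S)$ and $\beta_S(x)$ lies beyond $\beta_R(x)$. By the comparison above, $R\cap S$ is then a vertical subrectangle of $S$, so $S$ is a predecessor of $R$ and $\partial^sR\subset\partial^sS$ is impossible for $x\in\mathrm{Int}(S)$, a contradiction. The unstable case is symmetric in backward time.

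Finally, for reducedness, suppose $\Phi^\tau(\widehat R_i)\subseteq\widehat R_j$. Lifting, $\pi$ gives $\rho(g)(R_i)\subseteq R_j$ for some $g$. The Markovian intersection axiom forbids proper inclusions between rectangles of $\mathcal{R}$ with intersecting interiors, and this forces $\rho(g)(R_i)=R_j$, i.e. $i=j$.
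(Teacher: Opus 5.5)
\textbf{The gap: the naive lifts can overlap.} Your lift $\widehat R$ is exactly the paper's $\widetilde R$ from Proposition~\ref{p.fullyorientablemarkovstepone}. Your arguments for invariance, uniqueness of lifts, return time and reducedness essentially match the paper's. The problem is your key claim that distinct $\widehat R,\widehat S$ have disjoint interiors. This is false in general, and the paper states explicitly that these lifts need not have disjoint interiors.

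The case analysis you cite does not give a strict inequality between positive endpoints. Take Case~2 of Item~(2) of Proposition~\ref{p.markovianbirectangles}: $S$ is a predecessor of $R$ and $\mathcal{F}^u(\partial^u_+S)=\mathcal{F}^u(\partial^u_+R)$. One of the two alternatives there is $\mathcal{B}(S)=\mathcal{B}(R)$. This occurs, for instance, whenever $\partial^u_-\mathcal{B}(R)$ already contains the longer segment $\partial^u_+S$. Then $\mathcal{B}(R)$ meets $S$ exactly in $\partial^u_+S$, and minimality plus uniqueness in Theorem-Definition~\ref{thdef.bigbrother} force equality. In that situation $\beta_S=\beta_R$ on $R\cap S$. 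So $\widehat R\cap\widehat S$ is the graph of $\beta_R$ over the non-trivial rectangle $R\cap S$, which is precisely the situation of Item~(4) of Proposition~\ref{p.fullyorientablemarkovstepone}. In this subcase, the strict inclusion of Proposition~\ref{p.comparebirectangles} comes only from the negative endpoints.

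Some of your other steps survive. Your argument for Item~(3) of Definition~\ref{d.markovpartition} only needs the non-strict comparison $\beta_S(x)\in[x,\beta_R(x)]^s$ when $S$ is a predecessor of $R$. Fix the slip ``vertical subrectangle of $S$, so $S$ is a predecessor of $R$''. Self-overlaps are excluded, but for a different reason than the one you give: equal big brothers would give $\rho(g)(\mathcal{B}(R))=\mathcal{B}(R)$, hence $g=e$.

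\textbf{What is missing.} Because of these overlaps, your family does not project to a Markov partition: Item~(1) of Definition~\ref{d.markovpartition} fails. You need a way to separate the lifts along the flow without destroying $\widetilde\rho$-invariance or the Markov property. The paper proceeds as follows.
\begin{enumerate}
\item It groups the lifts into clusters, meaning chains of interior overlaps. All members of a cluster share the same big brother.
\item Together with Item~(1) of Theorem~\ref{t.markovianisanosovlike} and the finiteness axiom, this shows that each cluster is finite. Each cluster also contains a unique member $\widetilde{R_{min}}$ whose projection is a predecessor of the projection of every other member.
\item It replaces the whole $\widetilde\rho$-orbit of $\widetilde{R_{min}}$ by its image under $\widetilde\Phi^{-\epsilon}$, with $\epsilon$ small enough that no new overlaps appear.
\item It re-verifies the Markov axioms and the overlap structure. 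Here torsion-freeness of $G$ and finiteness of clusters are used to show that two pushed copies cannot overlap.
\item It inducts on the total number of intersections.
\end{enumerate}
Your proof needs this step, or some equivalent staggering of the lifts within each cluster according to predecessor depth.
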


The proof of the above proposition will be split in two parts: 

\begin{enumerate}[label=\Alph*)]
    \item First, we construct a family of rectangles in $W^+$ that differs from the lift to $W^+$ of a reduced Markov partition of $\Phi$ only in that the interiors of the rectangles need not be disjoint. More specifically, we will show that:

\begin{prop}\label{p.fullyorientablemarkovstepone}
    For every $R\in \mathcal{R}$ define $$\widetilde{R}=\{( x, y)\in W^+|x\in R, y\in \partial^u_+\mathcal{B}(R)\}$$ and let $\widetilde{\mathcal{R}}=\{\widetilde{R}|R\in\mathcal{R}\}$. We have that $\widetilde{\mathcal{R}}$ a $\widetilde{\rho}$-invariant collection of rectangles in $W^+$ such that:
    
    \begin{enumerate}[label=(\arabic*)]
        \item $\widetilde{\mathcal{R}}$ is a lift of $\mathcal{R}$ on $W^+$ for which each rectangle of $\mathcal{R}$ admits a unique lift in $\widetilde{\mathcal{R}}$.
        \item Let $\widetilde{x}\in W^+$ and $x=\pi(\widetilde{x})\in\mathcal{P}$. For every $\mathcal{F}^s$-separatrix $\mathcal{F}^s_{0}(x)$ of $x$ and every $\mathcal{F}^u$-separatrix $\mathcal{F}^u_{0}(x)$ of $x$, there exists a rectangle in $\widetilde{\mathcal{R}}$ that intersects the positive orbit of $\widetilde{x}$ and has non-trivial intersection with both $\pi^{-1}(\mathcal{F}^s_{0}(x))$ and $\pi^{-1}(\mathcal{F}^u_{0}(x))$.
        
        \item The projection of $\widetilde{\mathcal{R}}$ on $M$ is a collection of rectangles satisfying all the axioms in the definition of a reduced Markov partition of $\Phi$ (see Definition~\ref{d.markovpartition}), except that the interiors of the rectangles need not be disjoint.

        \item For any $R,S\in \mathcal{R}$ with $\mathrm{Int}(R)\cap \mathrm{Int}(S)\neq \emptyset$, if $\widetilde{R}\cap \widetilde{S}\neq \emptyset$, then $\partial^u_+R\cap \partial^u_+S\neq \emptyset$, $\mathcal{B}(R)=\mathcal{B}(S)$ and $\pi(\widetilde{R}\cap\widetilde{S})=R\cap S$.
    \end{enumerate} 
\end{prop}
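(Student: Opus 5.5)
We first check that each $\widetilde R$ is a transverse rectangle and that the family is $\widetilde\rho$-invariant and a lift of $\mathcal{R}$. For $x\in R$ the leaf $\mathcal{F}^s(x)$ crosses $R$ and then $\mathcal{B}(R)$, since $R\cap\mathcal{B}(R)=\partial^u_+R$ and $\mathcal{B}(R)$ lies on the positive side. Hence it meets $\partial^u_+\mathcal{B}(R)$ in exactly one point $y(x)$, by Proposition~\ref{p.propertiesoffoliformarkovianactions}(5), and $y(x)\in\mathcal{F}^s_+(x)-\{x\}$. The map $x\mapsto(x,y(x))$ is continuous: it is obtained by following the trivial bifoliation of $R\cup\mathcal{B}(R)$. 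So $\widetilde R$ is the graph of a section of $\pi$ over $R$. It is therefore a disk transverse to the fibers, i.e.\ to $\widetilde\Phi$, and $\pi$ maps it homeomorphically onto $R$. Because $\widetilde F^{s,u}=\pi^{-1}(\mathcal{F}^{s,u})$, the disk $\widetilde R$ is trivially bifoliated with boundary made of $\widetilde F^s$- and $\widetilde F^u$-segments. Invariance follows from Remark~\ref{r.invariantbigbrothers}: $\widetilde\rho(g)(\widetilde R)=\widetilde{\rho(g)(R)}$. Uniqueness of the lift is built into the definition, which gives Item~(1). By Proposition~\ref{p.actionproperdisc} and Item~(1) of Theorem~\ref{t.markovianisanosovlike}, distinct rectangles of $\mathcal{R}$ in one $\rho$-orbit have lifts in distinct $\widetilde\rho$-orbits only up to $G$. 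Thus $\widetilde{\mathcal{R}}$ projects to finitely many embedded transverse rectangles in $M$; embeddedness uses freeness of $\widetilde\rho$ and $\rho(g)(R)\ne R$ for $g\neq e$.

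Next we prove Item~(4), the key geometric fact. Suppose $\widetilde R\cap\widetilde S\ne\emptyset$ with $\mathrm{Int}(R)\cap\mathrm{Int}(S)\neq\emptyset$, and say $R\cap S$ is a vertical subrectangle of $R$. A common point $(x,y)$ has $y\in\partial^u_+\mathcal{B}(R)\cap\partial^u_+\mathcal{B}(S)$, so $\mathcal{F}^u(\partial^u_+\mathcal{B}(R))=\mathcal{F}^u(\partial^u_+\mathcal{B}(S))$. We compare the segments $(R\cup\mathcal{B}(R))\cap\mathcal{F}^s(x)$ and $(S\cup\mathcal{B}(S))\cap\mathcal{F}^s(x)$. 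Both lie in $\mathcal{I}(x,y')$ for $y'$ slightly before $y$, and they share a positive endpoint, so Proposition~\ref{p.comparebirectangles} forbids the configuration in Case~1 of Proposition~\ref{p.markovianbirectangles}(2). We are therefore in Case~2: $\partial^u_+R\subseteq\partial^u_+S$, and $\mathcal{B}(R),\mathcal{B}(S)$ are nested vertically. Equality of the positive $\mathcal{F}^u$-boundaries of the big brothers, together with the Markovian intersection axiom, then forces $\mathcal{B}(R)=\mathcal{B}(S)$. Once the big brothers coincide, the sections $y(\cdot)$ agree on $R\cap S$, and hence $\pi(\widetilde R\cap\widetilde S)=R\cap S$. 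Item~(2) should follow from Item~(1) of Proposition~\ref{p.markovianbirectangles} combined with the strong finite return axiom. Given $x$ and a pair of separatrices, we choose a quadrant germ, and the axiom gives a rectangle $R$ containing it. Applying a contracting element at a periodic point, or using Lemma~\ref{l.longrectangles}, we enlarge $R$ until $x\in R$. Along the positive orbit of $\widetilde x=(x,y)$ the segment $[x,y(t)]^s$ grows and eventually reaches $\partial^u_+\mathcal{B}(R)$, because $\mathcal{F}^s_+(x)$ is properly embedded. This gives a point of $\widetilde R$ on the forward orbit.

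Finally, Item~(3). Finite return time (Axiom~(2) of Definition~\ref{d.markovpartition}) follows from Item~(2) and cocompactness (Proposition~\ref{p.actionboundedfund}): a continuity and compactness argument on $\mathfrak D$ gives a uniform $T$. The Markov property (Axiom~(3)) follows from the corner condition and Item~(2) of Theorem~\ref{t.markovianisanosovlike}. The forward $\widetilde\Phi$-orbit of $\partial^s\widetilde R$ projects to $\partial^sR$, which lies in a periodic stable leaf. If it met $\mathrm{Int}(\widetilde S)$, then $\partial^sR$ would cut $\mathrm{Int}(S)$ in a non-Markovian way. We exclude this with the Markovian intersection axiom, after checking via Item~(4) and the monotonicity of $y(t)$ that the forward hit corresponds to $S$ being a successor of $R$; the unstable case is symmetric. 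Reducedness (Axiom~(4)) uses Item~(1) of Theorem~\ref{t.markovianisanosovlike}. A flow map $\Phi^\tau(\widetilde R_i)\subseteq\widetilde R_j$ would lift to $\pi$-projections with $R\subseteq\rho(g)(R')$. The Markovian intersection axiom then forces equality, so the two rectangles are in the same orbit, which gives $i=j$. I expect the main obstacle to be the Markov boundary condition in Item~(3). It requires carefully matching ``forward along $\widetilde\Phi$'' with ``successor in $\mathcal{R}$'' using the big-brother sections, and this is also where non-disjointness of interiors must be tolerated rather than excluded.
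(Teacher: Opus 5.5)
Your overall plan follows the paper's: a graph of a section for Item~(1), the Case~1/Case~2 analysis of Proposition~\ref{p.markovianbirectangles} for Item~(4), compactness for the return time, and the Markovian intersection axiom for reducedness. Two steps, however, fail as written.

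\textbf{Item~(2).} The forward orbit of $\widetilde{x}=(x,y)$ is $\{(x,y(t))\}_{t\geq 0}$, with $y(t)$ moving away from $x$. It meets $\widetilde{R}$ only if the point $\mathcal{F}^s(x)\cap\partial^u_+\mathcal{B}(R)$ lies strictly beyond $y$, i.e.\ only if $[x,y]^s\subset R\cup\mathcal{B}(R)$ with $y\notin\partial^u_+\mathcal{B}(R)$. A rectangle given by the strong finite return axiom at a germ of a quadrant of $x$ has no reason to satisfy this when $y$ is far from $x$. (Such a germ already contains $x$, so ``enlarging until $x\in R$'' does nothing.) If $y$ is already past $\partial^u_+\mathcal{B}(R)$, letting $[x,y(t)]^s$ grow never brings the orbit to $\widetilde R$; only the backward orbit meets it.

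What you need is one rectangle that contains $x$, meets both chosen separatrices non-trivially, \emph{and} whose birectangle contains $[x,y]^s$ with $y$ strictly before its positive end. The paper gets it by applying Proposition~\ref{p.markovianbirectangles}(1) to a segment $[x',y']^s$ containing $[x,y]^s$ in its interior, with $D$ the side of $\mathcal{F}^s(x)$ containing $\mathcal{F}^u_0(x)$. Then one of $r,\mathcal{B}(r)$ contains $x$ and meets both separatrices non-trivially, because $x$ is interior to $[x',y']^s$ and $r\cap\mathrm{Int}(D)\neq\emptyset$. Moreover $y$ lies strictly before $y'$, hence strictly before the point of the lift. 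This stronger form of Item~(2) is exactly what your compactness argument for the finite return time relies on.

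\textbf{Markov boundary axiom in Item~(3).} Your reduction is correct. You want to show that a hit of $\mathrm{Int}(\widetilde S)$ at strictly positive time from $\widetilde{x}\in\partial^s\widetilde R$ forces $S$ to be a successor of $R$. On the other hand, $x\in\partial^sR\cap\mathrm{Int}(S)$ forces $R\cap S$ to be a vertical subrectangle of $R$, so the two are incompatible.

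The tool you cite cannot establish the first implication. Item~(4) only concerns pairs with $\widetilde R\cap\widetilde S\neq\emptyset$. It says nothing about the order in which two disjoint lifts meet a fiber. The corner condition and the periodicity of the leaf containing $\partial^sR$ are irrelevant here.

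The missing ingredient is Proposition~\ref{p.comparebirectangles}. If $R\cap S$ is a vertical subrectangle of $R$, then $(S\cup\mathcal{B}(S))\cap\mathcal{F}^s(x)\subseteq(R\cup\mathcal{B}(R))\cap\mathcal{F}^s(x)$. So the fiber over $x$ meets $\widetilde S$ no later than $\widetilde R$, contradicting the forward hit. The $\partial^u$, negative-time case is symmetric, with the roles of $R$ and $S$ exchanged.

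\textbf{A smaller point in Item~(4).} Proposition~\ref{p.comparebirectangles} alone does not exclude Case~1, since $J\subsetneq I$ is compatible with a shared positive endpoint. What excludes it is the inclusion $J\subset R\cap L$ from the Case~1 argument. You should also pick the common point $x$ in $\mathrm{Int}(R\cap S)$, which is possible once the two positive $\mathcal{F}^u$-leaves coincide. Then both birectangles lie in the same $\mathcal{I}(x,y')$.
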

   \item  Once the proof of Proposition~\ref{p.fullyorientablemarkovstepone} is complete, we will show that it implies Proposition~\ref{p.fullyorientablemarkov}.
\end{enumerate}
\begin{proof}[Proof of Item (1) of Proposition \ref{p.fullyorientablemarkovstepone}]  

Consider $R\in \mathcal{R}$. Using the definition of the big brother of $R$ together with Item (4) of Proposition~\ref{p.propertiesoffoliformarkovianactions}, it is not difficult to see that, for every $x\in R$, there exists a unique point $y\in\mathcal{P}$ such that $(x,y)\in\widetilde{R}$. Consequently, $\widetilde{R}$ is a rectangle in $(W^+,\widetilde{F}^s,\widetilde{F}^u)$ that is topologically transverse to $\widetilde{\Phi}$. Next, by the definition of $\widetilde{R}$ and $\widetilde{\mathcal{R}}$, we have that $\widetilde{R}$ is the unique rectangle of $\widetilde{\mathcal{R}}$ projecting onto $R$ under $\pi$. Finally, by Remark~\ref{r.invariantbigbrothers}, 
$\widetilde{\rho(g)(R)}=\widetilde{\rho}(g)(\widetilde{R})$
for every $g\in G$; hence, $\widetilde{\mathcal{R}}$ is $\widetilde{\rho}$-invariant.
\end{proof}

\begin{proof}[Proof of Item (2) of Proposition \ref{p.fullyorientablemarkovstepone}] 
Consider $\widetilde{x}=(x,y)\in W^+$, $\mathcal{F}^s_{0}(x)$ an $\mathcal{F}^s$-separatrix of $x$, $\mathcal{F}^u_{0}(x)$ an $\mathcal{F}^u$-separatrix of $x$ and $D$ the closure of the unique  connected component of $\mathcal{P}-\mathcal{F}^s(x)$ containing $\mathcal{F}^u_{0}(x)$. 

Take $x',y'$ so that $[x,y]^s$ is contained in the interior of $[x',y']^s$. By Proposition \ref{p.markovianbirectangles}, there exists $r\in \mathcal{R}$ such that $[x',y']^s\subset r\cup \mathcal{B}(r)$ and $r\cap \mathrm{Int}(D)\neq \emptyset$. By definition of $D$ and since $x\in [x',y']^s-\{x',y'\}$, we deduce that either $r$ or $\mathcal{B}(r)$ contains $x$ and intersects both $\mathcal{F}^s_{0}(x)$ and $\mathcal{F}^u_{0}(x)$ non-trivially.

Assume without any loss of generality that $r$ satisfies the previous properties. This implies that $\widetilde{r}$, the unique lift of $r$ on $\widetilde{\mathcal{R}}$, intersects non-trivially both $\pi^{-1}(\mathcal{F}^s_{0}(x))$ and $\pi^{-1}(\mathcal{F}^u_{0}(x))$. Recall now that by identifying $W^+$ with the space of positively oriented $\mathcal{F}^s$-segments in $\mathcal{P}$, we assumed that the flow $\widetilde{\Phi}$ makes the $\mathcal{F}^s$-segments bigger in the future. Thanks to this fact and to the fact that $[x,y]^s\subset r\cup \mathcal{B}(r)$, we get that  the positive orbit of $\widetilde{x}$ by $\widetilde{\Phi}$ intersects $\widetilde{r}$, which proves the desired result. 
\end{proof}

\begin{proof}[Proof of Item (3) of Proposition \ref{p.fullyorientablemarkovstepone}] \nopagebreak
    
    Let $\mathcal{R}_M$ be the projection of $\widetilde{\mathcal{R}}$ on $M$. Thanks to Item (1) of this proposition and to the fact that $\mathcal{R}$ consists of finitely many $\rho$-orbits of rectangles, we get that $\widetilde{\mathcal{R}}$ consists of finitely many  $\widetilde{\rho}$-orbits of rectangles; hence, $\mathcal{R}_M$ is a finite collection of rectangles in $M$ that are topologically transverse to $\Phi$. We will now show that $\mathcal{R}_M$ satisfies Items (2)-(4) of Definition \ref{d.markovpartition}. 

    First, assume by contradiction that $\mathcal{R}_M$ does not satisfy Item (4) of Definition \ref{d.markovpartition}. In that case, there exist two distinct rectangles $\widetilde{R_i},\widetilde{R_j}\in \widetilde{\mathcal{R}}$ and a continuous map $\tau:\widetilde{R_i}\rightarrow \mathbb{R}$ such that $\widetilde{\Phi}^\tau(\widetilde{R_i})\subseteq \widetilde{R_j}$. Let $R_i=\pi(\widetilde{R_i})\in \mathcal{R}$ and $R_j=\pi(\widetilde{R_j})\in \mathcal{R}$. Since every rectangle in $\mathcal{R}$ lifts to a unique rectangle in $\widetilde{\mathcal{R}}$,  we have that $R_i\neq R_j$. Recall now that $\pi$ defines a trivial line bundle over $\mathcal{P}$, whose fibers coincide with the orbits of $\widetilde{\Phi}$. Thanks to the previous fact, we get that $R_i\subseteq R_j$, which is impossible for any two distinct rectangles in $\mathcal{R}$, thanks to the Markovian intersection axiom.

      Next, assume by contradiction that $\mathcal{R}_M$ does not satisfy Item (3) of Definition \ref{d.markovpartition}. In that case, there exist two distinct rectangles $\widetilde{R},\widetilde{S}\in\widetilde{\mathcal{R}}$ for which one of the following holds:
      
      \begin{itemize}
          \item There exist $\widetilde{x}\in\partial^s \widetilde{R}$ and $t_0>0$ such that $\widetilde{\Phi}^{t_0}(\widetilde{x})$ belongs to the interior of $\widetilde{S}$
          \item There exist $\widetilde{x}\in\partial^u \widetilde{R}$ and $t_0<0$ such that $\widetilde{\Phi}^{t_0}(\widetilde{x})$ belongs to the interior of $\widetilde{S}$
      \end{itemize}
      
      Assume without any loss of generality that we are in the situation of the first of the above two cases (the second case can be treated by a similar argument). 

    \begin{figure}[h!]
    \centering
    \includegraphics[scale=0.65]{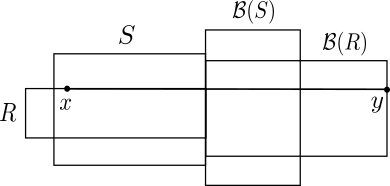}
    \caption{If the positive $\widetilde{\Phi}$-orbit of $(x,y)$ intersects the interior of $\widetilde{S}$, then $(x,y)$ can not lie in $\partial^s\widetilde{R}$.}
    \label{f.proofprop52}
    \end{figure}

      Consider $R=\pi(\widetilde{R})$, $S=\pi(\widetilde{S})$, $x=\pi(\widetilde{x})$, and let $y\in\mathcal{P}$ denote the unique point of intersection of $\mathcal{F}^s_+(x)$ with $\partial^u_+\mathcal{B}(R)$ (see Figure \ref{f.proofprop52}). Notice that by the definition of $\widetilde{R}$, we have $\widetilde{x}=(x,y)$. Notice also that, thanks to the fact that  $\pi(\bigcup_{t\in\mathbb{R}}\widetilde{\Phi}^t(\widetilde{x}))=x$, we have $x\in\partial^sR\cap\operatorname{Int}(S)$. On the one hand, since $\partial^sR\cap\operatorname{Int}(S)\neq\emptyset$, the Markovian intersection axiom implies that $R\cap S$ is a vertical subrectangle of $R$. Hence, by Proposition~\ref{p.comparebirectangles},
\begin{equation}\label{eq.comparisonrs}
    (S\cup\mathcal{B}(S))\cap\mathcal{F}^s(x)\subseteq(R\cup\mathcal{B}(R))\cap\mathcal{F}^s(x).
\end{equation}
On the other hand, since the strictly positive orbit of $\widetilde{x}$ intersects the interior of $\widetilde{S}$, the definition of $\widetilde{\mathcal{R}}$ together with our choice of orientation of $\widetilde{\Phi}$ implies that $[x,y]^s\subset S\cup\mathcal{B}(S)$ and that $y\notin \partial^u_+\mathcal{B}(S)$. The previous facts contradict  (\ref{eq.comparisonrs}), given that $ y\in\partial^u_+\mathcal{B}(R)$.

   Finally, assume by contradiction that $\mathcal{R}_M$ does not satisfy Item (2) of Definition \ref{d.markovpartition}. In this case, there exist $(\widetilde{x_n})_{n\in \mathbb{N}}$ a bounded sequence of points in $W^+$ and $(t_n)_{n\in \mathbb{N}}$ a sequence of positive real numbers going to infinity such that the orbit segment $\underset{t\in[0,t_n]}{\cup}\widetilde{\Phi}^t(\widetilde{x_n})$ does not intersect a rectangle in $\widetilde{\mathcal{R}}$ for every $n\in \mathbb{N}$. By  extracting a subsequence if necessary, assume without any loss of generality that $\widetilde{x_n}\underset{n\rightarrow \infty}{\longrightarrow}\widetilde{X} \in W^+$. 

    Let $x_n= \pi(\widetilde{x_n})$ and $X=\pi(\widetilde{X})$. By the continuity of $\pi$, we have that $(x_n)_{n\in \mathbb{N}}$ converges to $X$. Consider $V$ the closure of a connected component of $\mathcal{P}-(\mathcal{F}^s(X)\cup\mathcal{F}^u(X))$ and $U$ a small neighborhood of $X$ inside $V$ such that infinitely many $x_n$ are contained in $U$. By possibly extracting another subsequence, assume that all the $x_n$ belong in $U$. Denote by $\mathcal{F}_{0}^s(X)$ and $\mathcal{F}_{0}^u(X)$ the unique $\mathcal{F}^s$- and $\mathcal{F}^u$-separatrices of $X$ bounding $V$. Thanks to Item (3) of this proposition, there exists $\widetilde{R}\in  \widetilde{\mathcal{R}}$ intersecting the strictly positive orbit of $\widetilde{X}$ and having a non-trivial intersection with both $\pi^{-1}(\mathcal{F}_{0}^s(X))$ and $\pi^{-1}(\mathcal{F}_{0}^u(X))$. Take $T>0$ such that $\widetilde{\Phi}^{T}(\widetilde{X})\in \widetilde{R}$. For $n$ sufficiently big there exists $T_{n}>0$ close to $T$ such that $\widetilde{\Phi}^{T_n}(\widetilde{x_n})\in \widetilde{R}$. This contradicts the fact that the future orbits 
    of $\widetilde{x_n}$ take progressively more and more time before intersecting a rectangle in $\widetilde{\mathcal{R}}$.
 
\end{proof}

\begin{proof}[Proof of Item (4) of Proposition \ref{p.fullyorientablemarkovstepone}]       
    
    Consider $R,S\in\mathcal{R}$ such that $\mathrm{Int}(R)\cap \mathrm{Int}(S)\neq \emptyset$ and $\widetilde{R}\cap \widetilde{S}\neq\emptyset$. By Lemma~\ref{l.npredecessor}, we can assume without any loss of generality  that $R$ is a predecessor of $S$ of some generation. Let us show that $\partial^u_+S\subset \partial^u_+R$ and $\mathcal{B}(R)=\mathcal{B}(S)$. 

\begin{figure}[h!]
\centering

\begin{minipage}[t]{0.48\textwidth}
    \centering
    \includegraphics[width=\linewidth]{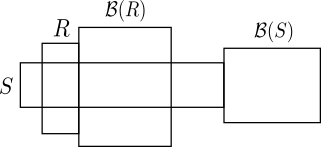}

\end{minipage}
\hfill
\begin{minipage}[t]{0.48\textwidth}
    \centering
    \includegraphics[width=\linewidth]{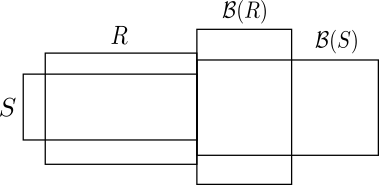}

\end{minipage}
\caption{In both of the above cases, we have that $\widetilde{R}\cap \widetilde{S}=\emptyset$.}
    \label{f.nointersect}
\end{figure}

 Indeed, if $R$ does not intersect $\partial_+^uS$, then $\operatorname{Int}(\mathcal{B}(R))\cap\operatorname{Int}(S)\neq\emptyset$ (see the left panel of Figure \ref{f.nointersect}). By Lemma \ref{l.npredecessor} and the fact that $R$ and $\mathcal{B}(R)$ have disjoint interiors, it follows that $\mathcal{B}(R)$ is also a predecessor of $S$ of some generation. In particular, thanks to Proposition \ref{p.propertiesoffoliformarkovianactions}, $\partial_+^u\mathcal{B}(R)\cap \partial_+^u\mathcal{B}(S)=\emptyset$. By our definition of $\widetilde{\mathcal{R}}$, the previous fact implies that the rectangles $\widetilde{R}$ and $\widetilde{S}$ are disjoint, which contradicts our initial hypothesis. We deduce that $R$ intersects $\partial_+^uS$ and that, since $R$ is a predecessor of $S$ of some generation, $\partial^u_+S\subset \partial^u_+R$. Using the definition of big brother, $\partial_-^u\mathcal{B}(S)\subseteq \partial_-^u\mathcal{B}(R)$; hence, thanks to the Markovian intersection property and to Lemma \ref{l.npredecessor}, either $\mathcal{B}(R)=\mathcal{B}(S)$ or $\mathcal{B}(R)$ is a predecessor of some non-zero generation of $\mathcal{B}(S)$. Assume that $\mathcal{B}(R)$ is a predecessor of some non-zero generation of $\mathcal{B}(S)$  (see the right panel of Figure \ref{f.nointersect}). Once again, by the Markovian intersection property, since $\partial_-^u\mathcal{B}(S)\subseteq \partial_-^u\mathcal{B}(R)$, we have that $\partial_+^u\mathcal{B}(R)\cap \partial_+^u\mathcal{B}(S)=\emptyset$. Therefore, by our definition of $\widetilde{\mathcal{R}}$, the rectangles $\widetilde{R}$ and $\widetilde{S}$ are disjoint, which contradicts our initial hypothesis. It follows that $\mathcal{B}(R)=\mathcal{B}(S)$. 

 Finally, using the definition of $\widetilde{\mathcal{R}}$, $$ \widetilde{R}\cap \widetilde{S}=\{(x,y)\in W^+| x\in R\cap S \text{ and } y\in \partial_+^u\mathcal{B}(R)\}$$
Since $R$ is a predecessor of $S$ of some generation, the previous set is a horizontal subrectangle of $\widetilde{R}$ and a vertical subrectangle of $\widetilde{S}$ projecting to $R\cap S$ under $\pi$. 
    
\end{proof}

Following the notations of Proposition \ref{p.fullyorientablemarkovstepone}, the projection $\mathcal{R}_M$ of $\widetilde{\mathcal{R}}$ on $M$ is almost a Markov partition of $\Phi$, except from the fact that the rectangles in $\mathcal{R}_M$ do not necessarily have disjoint interiors. In order to prove Proposition \ref{p.fullyorientablemarkov}, we will need to show that it is possible to separate the rectangles in $\widetilde{\mathcal{R}}$ by pushing them along the flow $\widetilde{\Phi}$. In general, such a separation is rather complex for arbitrary collections of rectangles in $W^+$ satisfying Items (1)-(3) of Proposition \ref{p.fullyorientablemarkovstepone}. However in our case, this will be possible, because of the particular nature of the intersections of the rectangles in $\widetilde{\mathcal{R}}$ given by Item (4) of Proposition \ref{p.fullyorientablemarkovstepone}. 

\begin{proof}[Proof of Proposition \ref{p.fullyorientablemarkov}]
  Take $\widetilde{\mathcal{R}}$ the collection of rectangles given by Proposition \ref{p.fullyorientablemarkovstepone}. Following the notations of the previous proposition, throughout this proof, for every $R\in\mathcal{R}$, we denote by $\widetilde{R}$ the unique lift of $R$ in $\widetilde{\mathcal{R}}$ and, conversely, for every $\widetilde{S}\in\widetilde{\mathcal{R}}$, we write $S=\pi(\widetilde{S})$.

  If the rectangles in $\widetilde{\mathcal{R}}$ have pairwise disjoint interiors, then by Proposition \ref{p.fullyorientablemarkovstepone}, $\widetilde{\mathcal{R}}$ satisfies all the properties required in Proposition~\ref{p.fullyorientablemarkov}, thereby completing its proof. Assume from now on that there exist rectangles in $\widetilde{\mathcal{R}}$, whose interiors intersect. For every $\widetilde{R}$ we define the \emph{cluster} of $\widetilde{R}$ as follows
\begin{align*}
      \mathfrak{Cl}(\widetilde{R}):=\{\widetilde{S}\in \widetilde{\mathcal{R}}| &\exists\widetilde{R_0}, \widetilde{R_1},...,\widetilde{R_s}\in \widetilde{\mathcal{R}} \text{ such that } \widetilde{R_0}=\widetilde{R}, \widetilde{R_s}=\widetilde{S}, \\& \text{ and } \mathrm{Int}(\widetilde{R_i})\cap \mathrm{Int}(\widetilde{R_{i+1}})\neq \emptyset  \text{ for every } i\in \llbracket 0, s-1\rrbracket \}
\end{align*}

Naturally, $\widetilde{R}\in \mathfrak{Cl}(\widetilde{R})$ for every $\widetilde{R}\in \widetilde{\mathcal{R}}$. Moreover, since $\widetilde{\mathcal{R}}$ is $\widetilde{\rho}$-invariant and consists of finitely many $\widetilde{\rho}$-orbits of rectangles, $\{\mathfrak{Cl}(\widetilde{R})\mid\widetilde{R}\in\widetilde{\mathcal{R}}\}$ defines a $\widetilde{\rho}$-invariant partition of $\widetilde{\mathcal{R}}$, which is finite up to the action $\widetilde{\rho}$. The particular nature of the intersections in $\widetilde{\mathcal{R}}$ (see Item (4) of Proposition \ref{p.fullyorientablemarkovstepone}) imposes a specific structure to every cluster of a rectangle in $\widetilde{\mathcal{R}}$. More specifically, we have that

\textbf{Claim 1:} \textit{Fix $\widetilde{R}\in \widetilde{\mathcal{R}}$. There does not exist a sequence $(\widetilde{R_i})_{i\in\mathbb{N}}$ of rectangles in $\mathfrak{Cl}(\widetilde{R})$ such that $R_{i+1}$ is a successor (resp. predecessor) of some positive generation of $R_i$ for every $i\in \mathbb{N}$.}


\textbf{Claim 2:} \textit{Fix $\widetilde{R}\in\widetilde{\mathcal{R}}$. Consider $\widetilde{R}_1,\widetilde{R}_2\in\mathfrak{Cl}(\widetilde{R})$ such that both $R_1$ and $R_2$ are predecessors of $R$ of some generation. Then, up to exchanging the roles of $R_1$ and $R_2$, we have that $R_1$ is a predecessor of some generation of $R_2$. Also, $\mathrm{Int}(\widetilde{R}_1)\cap\mathrm{Int}(\widetilde{R}_2)\cap \mathrm{Int}(\widetilde{R})\neq \emptyset$.}

\textbf{Claim 3:} \textit{Fix $\widetilde{R}\in \widetilde{\mathcal{R}}$. There exists a unique rectangle $\widetilde{R_{min}}$ in $\mathfrak{Cl}(\widetilde{R})$ such that for every $\widetilde{r}\in \mathfrak{Cl}(\widetilde{R})$ we have that $ R_{min}$ is a predecessor of some generation of $r$.}

\textbf{Claim 4:} \textit{Fix $\widetilde{R}\in \widetilde{\mathcal{R}}$. The set $\mathfrak{Cl}(\widetilde{R})$ is finite.}

\textbf{Claim 5:} \textit{Fix $\widetilde{R}\in \widetilde{\mathcal{R}}$ and $\widetilde{R_{min}}$ the rectangle in $\mathfrak{Cl}(\widetilde{R})$ given by Claim 3. There exists $\epsilon>0$ such that for every $t\in [-\epsilon,0)$ and every $\widetilde{S}\in\widetilde{\mathcal{R}}$ that satisfies $\mathrm{Int}(R_{min})\cap\mathrm{Int}(S)\neq\emptyset$    $$\widetilde{\Phi}^t(\widetilde{R_{min}})\cap  \widetilde{S}=\emptyset$$}

\begin{proof}[Proof of Claim 1]
    Consider $\widetilde{R}\in \widetilde{\mathcal{R}}$ and assume that there exists a sequence $(\widetilde{R_i})_{i\in\mathbb{N}}$ of rectangles in $\mathfrak{Cl}(\widetilde{R})$ such that $R_{i+1}$ is a successor (resp. predecessor) of some positive generation of $R_i$ for every $i\in \mathbb{N}$.  
    
    Notice first that, thanks to the definition of $ \mathfrak{Cl}(\widetilde{R})$ and to Item (4) of Proposition \ref{p.fullyorientablemarkovstepone}, $\mathcal{B}(R')=\mathcal{B}(R)$ for every $\widetilde{R'}\in \mathfrak{Cl}(\widetilde{R})$. In particular, $\mathcal{B}(R_i)=\mathcal{B}(R_{i+1})$ for every $i\in \mathbb{N}$. Thanks to the previous fact and to the finiteness axiom, there exist $i,j\in \mathbb{N}$ and $g\in G-\{e\}$ such that $\rho(g)(R_i)=R_j$. However, by Remark \ref{r.invariantbigbrothers}, we have that $\rho(g)(\mathcal{B}(R_i))=\mathcal{B}(R_j)=\mathcal{B}(R_i)$, which together with Item (1) of Theorem \ref{t.markovianisanosovlike} implies that $g=e$ and leads to an absurdity.

\end{proof} 

\begin{proof}[Proof of Claim 2]
   Consider $\widetilde{R}, \widetilde{R_1}, \widetilde{R_2}$ as in the statement of Claim 2. By the same argument used in the proof of Claim 1, 
   \begin{equation}\label{eq.equalitybigbrothers}
       \mathcal{B}(R_1)=\mathcal{B}(R_2)=\mathcal{B}(R)
   \end{equation} Therefore, by the definition of big brother, $\partial ^u_+R_1\cup \partial ^u_+R_2\subset \partial^u_-\mathcal{B}(R)\subset\mathcal{F}^u(\partial^u_+R)$. Moreover, since $R_1$ and $R_2$ are predecessors of some generation of $R$, we get that  $\partial^u_+R\subset \partial ^u_+R_1 \cap \partial ^u_+R_2$ and thus $\mathrm{Int}(R_1)\cap \mathrm{Int}(R_2)\neq \emptyset$. By Lemma \ref{l.npredecessor}, after possibly changing the roles of $R_1$ and $R_2$, we deduce that $R_1$ is a predecessor of some generation of $R_2$. Finally, the non-emptiness of $\mathrm{Int}(\widetilde{R})\cap\mathrm{Int}(\widetilde{R}_1)\cap \mathrm{Int}(\widetilde{R}_2)$ follows from the non-emptiness of $\mathrm{Int}(R)\cap \mathrm{Int}(R_1)\cap \mathrm{Int}(R_2)$, together with \eqref{eq.equalitybigbrothers} and the definition of $\widetilde{\mathcal{R}}$ (see Proposition~\ref{p.fullyorientablemarkovstepone}).
   
 \end{proof}
\begin{proof}[Proof of Claim 3]
   Consider $\widetilde{R}\in \widetilde{\mathcal{R}}$ and denote by $ \mathfrak{Cl}(R)$ the projection of $\mathfrak{Cl}(\widetilde{R})$ on $\mathcal{P}$. We define $R_{min}$ as the predecessor of $R$ of the biggest generation inside $\mathfrak{Cl}(R)$. Such a rectangle exists and is uniquely defined, thanks to Claims 1 and 2. 

   Consider now $\widetilde{r}\in \mathfrak{Cl}(\widetilde{R})$. By the definition of $\mathfrak{Cl}(\widetilde{R})$, there exists a finite sequence $\widetilde{R_0},...,\widetilde{R_s}$ of rectangles in $\mathfrak{Cl}(\widetilde{R})$ such that $\widetilde{R_0}=\widetilde{R_{min}}$, $\widetilde{R_s}=\widetilde{r}$ and $\mathrm{Int}(\widetilde{R_i})\cap \mathrm{Int}(\widetilde{R_{i+1}})\neq \emptyset$ for every $i\in \llbracket 0, s-1\rrbracket$. We will now prove that $R_{min}$ is a predecessor of some generation of $r$ by induction on $s$.
   
    The statement is clear when $s=0$. Assume now that $R_{min}$ is a predecessor of some generation of $R_{s-1}$. By the previous fact, Claim~2 and Item~(4) of Proposition~\ref{p.fullyorientablemarkovstepone}, we have $$\partial^u_+R_{s-1}\subseteq \partial^u_+R_{min}$$ Moreover, by definition, $\mathrm{Int}(\widetilde{r})\cap \mathrm{Int}(\widetilde{R_{s-1}})\neq\emptyset$, and thus, after applying $\pi$, $\mathrm{Int}(r)\cap \mathrm{Int}(R_{s-1})\neq\emptyset$. We deduce, thanks to Lemma~\ref{l.npredecessor} and to Item~(4) of Proposition~\ref{p.fullyorientablemarkovstepone}, that $r$ is either a predecessor or a successor of some generation of $R_{s-1}$ and that  $$\partial^u_+R_{s-1}\subseteq \partial^u_+r \text{ or } \partial^u_+r\subseteq \partial^u_+R_{s-1}$$ In both of the above cases, using the fact that $\partial^u_+R_{s-1}\subseteq \partial^u_+R_{min}$,  we obtain that $\mathrm{Int}(r)\cap \mathrm{Int}(R_{min})\neq\emptyset$; hence, $r$ is either a successor or a predecessor of some generation of $R_{min}$. The first case implies that $R_{min}$ is a predecessor of $r$ of some generation, whereas the second case implies that $r=R_{min}$ as $R_{min}$ is the predecessor of $R$ of the biggest generation in $\mathfrak{Cl}(R)$.

\end{proof}

\begin{proof}[Proof of Claim 4]
     Consider $\widetilde{R}\in \widetilde{\mathcal{R}}$ and assume that $ \mathfrak{Cl}(\widetilde{R})$ consists of infinitely many rectangles in $\widetilde{\mathcal{R}}$. Denote by $ \mathfrak{Cl}(R)$ the projection of $\mathfrak{Cl}(\widetilde{R})$ on $\mathcal{P}$ and let $\widetilde{R_{min}}$ be the rectangle in $\mathfrak{Cl}(\widetilde{R})$ given by Claim 3. Using the fact that any rectangle in $\mathcal{R}$ admits finitely many successors of first generation, together with the definition of $\widetilde{R_{min}}$, we get that there exists $R_1$, a successor of first generation of $R_{min}$, such that infinitely many successors of some generation of $R_1$ belong in $ \mathfrak{Cl}(R)$. By repeating the previous argument, one can construct an infinite sequence $R_0=R_{min},R_1, R_2,...,R_i,...$ containing infinitely many rectangles in $ \mathfrak{Cl}(R)$ and such that, for every $i\in \mathbb{N}$, the rectangle $R_{i+1}$ is a successor of first generation of $R_i$. Passing to a subsequence of $(R_i)_{i\in \mathbb{N}}$, we obtain a sequence $S_0=R_{min},S_1, S_2,...,S_i,...$ of rectangles in $\mathfrak{Cl}(R)$ such that $S_{i+1}$ is a successor of some positive generation of $S_i$ for every $i\in \mathbb{N}$. This contradicts Claim~1 and completes the proof of Claim~4.
\end{proof}

\begin{proof}[Proof of Claim 5]
Consider $\widetilde{R}\in \widetilde{\mathcal{R}}$ and let $\widetilde{R_{min}}$ be the rectangle in $\mathfrak{Cl}(\widetilde{R})$ given by Claim 3. Assume by contradiction that there exist $(\widetilde{S_n})_{n\in\mathbb{N}}$ a sequence of rectangles in $\widetilde{\mathcal{R}}$ with $\mathrm{Int}(R_{min})\cap\mathrm{Int}(S_n)\neq\emptyset$, $(\widetilde{x_n})_{n\in\mathbb{N}}$ a sequence of points in $\widetilde{R_{min}}$ and $(\epsilon_n)_{n\in\mathbb{N}}$ a sequence of strictly positive real numbers going to zero such that 
$$\widetilde{\Phi}^{-\epsilon_n}(\widetilde{x_n})\in \widetilde{S_n}$$

After passing to a subsequence if necessary, assume that the sequence $(\widetilde{x_n})_{n\in\mathbb{N}}$ converges to a point $\widetilde{X}\in \widetilde{R_{min}}$. Moreover, since $\widetilde{\mathcal{R}}$ consists of finitely many $\widetilde{\rho}$-orbits of rectangles and $\widetilde{\rho}$ is properly discontinuous, any compact neighborhood of $\widetilde{R_{min}}$ intersects only finitely many rectangles in $\widetilde{\mathcal{R}}$. We deduce that after passing to a further subsequence, we can assume that $\widetilde{S_n}=\widetilde{S_0}$ for every $n\in \mathbb{N}$. 

By passing to the limit as $n\rightarrow+\infty$, we obtain that $\widetilde{X}\in \widetilde{S_0}\cap \widetilde{R_{min}}$. Therefore,  since $\mathrm{Int}(R_{min})\cap\mathrm{Int}(S_0)\neq\emptyset$, Item (4) of Proposition \ref{p.fullyorientablemarkovstepone} implies that $$\pi(\widetilde{S_0}\cap \widetilde{R_{min}})=R_{min}\cap S_0$$ In other words, given that each orbit of $\widetilde{\Phi}$ corresponds to the preimage by $\pi$ of a point in $\mathcal{P}$, every orbit of $\widetilde{\Phi}$ that intersects both $\widetilde{S}$ and $\widetilde{R_{min}}$, intersects the two rectangles along a unique point belonging to $\widetilde{S_0}\cap \widetilde{R_{min}}$. This contradicts the existence of $x_0\in \widetilde{R_{min}}$ and $\epsilon_0<0$ such that $\widetilde{\Phi}^{-\epsilon_0}(\widetilde{x_0})\in \widetilde{S_0}$. 
\end{proof}

Recall that clusters in $\widetilde{\mathcal{R}}$ form a $\widetilde{\rho}$-invariant partition of $\widetilde{\mathcal{R}}$ that is finite up the action of $\widetilde{\rho}$. Consider $\mathfrak{Cl}(\widetilde{\mathcal{R}_0})$,  $\mathfrak{Cl}(\widetilde{\mathcal{R}_1})$, ...,  $\mathfrak{Cl}(\widetilde{\mathcal{R}_m})$ a representative of each $\widetilde{\rho}$-orbit of clusters in $\widetilde{\mathcal{R}}$. We define $N:=|\mathfrak{Cl}(\widetilde{\mathcal{R}_1})|+ ...+ |\mathfrak{Cl}(\widetilde{\mathcal{R}_m})|-m\geq 0$ as the \emph{total number of intersections of $\widetilde{\mathcal{R}}$}. We will now push the rectangles of $\widetilde{\mathcal{R}}$ along the flow $\widetilde{\Phi}$, in order to produce a new collection of rectangles satisfying Items (1)-(4) of Proposition \ref{p.fullyorientablemarkovstepone} and having a total number of intersections strictly smaller than $N$. 

Fix $\widetilde{R}\in \widetilde{\mathcal{R}}$ such that $ \{\widetilde{R}\} \subsetneq \mathfrak{Cl}(\widetilde{R})$. Let $\widetilde{R_{min}}$ be the rectangle in $ \mathfrak{Cl}(\widetilde{R})$ given by Claim 3 and $\epsilon>0$ be the constant given by Claim 5. Denote by $\widetilde{\mathcal{R}_{new}}$ the collection of rectangles in $W^+$ obtained from $\widetilde{\mathcal{R}}$ after replacing for every $g\in \pi_1(M)$ the rectangle $\widetilde{\rho}(g)(\widetilde{R_{min}})$ by the rectangle $\widetilde{\Phi}^{-\epsilon}(\widetilde{\rho}(g)(\widetilde{R_{min}}))$. 

\vspace{0.15cm}
\textbf{Claim 6:} \textit{$\widetilde{\mathcal{R}_{new}}$ is invariant by $\widetilde{\rho}$ and satisfies Items (1)-(4) of Proposition \ref{p.fullyorientablemarkovstepone}}
\begin{proof}[Proof of Claim 6]
    
First notice that, since $\widetilde{\Phi}$ is invariant by $\widetilde{\rho}$, the set $\{\widetilde{\Phi}^{-\epsilon}(\widetilde{\rho}(g)(\widetilde{R_{min}}))|g\in G\}$ forms a $\widetilde{\rho}$-orbit of rectangles in $W^+$. It follows that $\widetilde{\mathcal{R}_{new}}$ is invariant by $\widetilde{\rho}$. Next, using the fact that $\widetilde{\mathcal{R}_{new}}$ was obtained by pushing the rectangles of $\widetilde{\mathcal{R}}$ along $\widetilde{\Phi}$ for a uniformly bounded time, it is easy to verify that $\widetilde{\mathcal{R}_{new}}$ satisfies Items (1) and (2) of Proposition~\ref{p.fullyorientablemarkovstepone}. 

\vspace{0.15cm}
\textit{$\widetilde{\mathcal{R}_{new}}$ satisfies Item (3) of Proposition \ref{p.fullyorientablemarkovstepone}.}

By our previous arguments, $\widetilde{\mathcal{R}_{new}}$ is invariant by $\widetilde{\rho}$ and lifts $\mathcal{R}$ on $W^+$. It follows that $\widetilde{\mathcal{R}_{new}}$ consists of finitely many $\widetilde{\rho}$-orbits of rectangles and that its projection on $M$ consists of finitely many rectangles. Next, the fact that the projection of $\widetilde{\mathcal{R}_{new}}$ on $M$ satisfies Items (2) and (4) of Definition \ref{d.markovpartition}, follows directly from the construction of $\widetilde{\mathcal{R}_{new}}$ and the fact that $\widetilde{\mathcal{R}}$ satisfies Item (3) of Proposition \ref{p.fullyorientablemarkovstepone}. 

Finally, assume by contradiction that the projection of $\widetilde{\mathcal{R}_{new}}$ on $M$ does not satisfy Item (3) of Definition \ref{d.markovpartition}. In that case, there exist two distinct rectangles $\widetilde{S_1},\widetilde{S_2}\in\widetilde{\mathcal{R}_{new}}$ for which one of the following holds:
      
      \begin{itemize}
          \item There exist $\widetilde{x}\in\partial^s \widetilde{S_1}$ and $t_0>0$ such that $\widetilde{\Phi}^{t_0}(\widetilde{x})$ belongs to the interior of $\widetilde{S_2}$.
          \item There exist $\widetilde{x}\in\partial^u \widetilde{S_1}$ and $t_0<0$ such that $\widetilde{\Phi}^{t_0}(\widetilde{x})$ belongs to the interior of $\widetilde{S_2}$.
      \end{itemize}
      
      Suppose without any loss of generality that we are in the situation of the first of the above two cases (the second case can be treated by a similar argument). Notice that $\widetilde{S_1},\widetilde{S_2}$ can not both belong to $\widetilde{\mathcal{R}}$, as this would contradict Item (3) of Proposition \ref{p.fullyorientablemarkovstepone}. Similarly, if $\widetilde{S_1}=\widetilde{\Phi}^{-\epsilon}(\widetilde{\rho}(g_1)(\widetilde{R_{min}}))$ and $\widetilde{S_2}=\widetilde{\Phi}^{-\epsilon}(\widetilde{\rho}(g_2)(\widetilde{R_{min}}))$ for some $g_1,g_2\in G$, then the strictly positive orbit of $\widetilde{\Phi}^{\epsilon}(\widetilde{x})\in \partial^s(\widetilde{\rho}(g_1)(\widetilde{R_{min}}))$ would intersect the interior of $\widetilde{\rho}(g_2)(\widetilde{R_{min}})$, where both $\widetilde{\rho}(g_1)(\widetilde{R_{min}})$ and $\widetilde{\rho}(g_2)(\widetilde{R_{min}})$ belong to $\widetilde{\mathcal{R}}$. Once again, this contradicts Item (3) of Proposition \ref{p.fullyorientablemarkovstepone}. One can similarly treat the case where $\widetilde{S_1}\in \widetilde{\mathcal{R}}$ and $\widetilde{S_2}=\widetilde{\Phi}^{-\epsilon}(\widetilde{\rho}(g_2)(\widetilde{R_{min}}))$ for some $g_2\in G$.

      It remains to consider the case where $\widetilde{S_1}=\widetilde{\Phi}^{-\epsilon}(\widetilde{\rho}(g_1)(\widetilde{R_{min}}))$ and $\widetilde{S_2}\in \widetilde{\mathcal{R}}$ for some $g_1\in G$. In this case, recall that, by our choice of $\epsilon$ (see Claim 5), for every $t\in[0,\epsilon)$ the point $\widetilde{\Phi}^{t}(\widetilde{x})$ does not belong in the  interior of any rectangle in $\widetilde{\mathcal{R}}$. If $\widetilde{\Phi}^{\epsilon}(\widetilde{x})\notin \widetilde{S_2}$, then, similarly to before, one obtains a contradiction with Item (3) of Proposition \ref{p.fullyorientablemarkovstepone}. It follows that $\widetilde{\Phi}^{\epsilon}(\widetilde{x})\in \widetilde{S_2}$ and thus that $\widetilde{\rho}(g_1)(\widetilde{R_{min}})\cap \mathrm{Int}(\widetilde{S_2})\neq \emptyset$. By applying $\pi$, we get that $\rho(g_1)(R_{min})\cap \mathrm{Int}(S_2)\neq \emptyset$; hence, $\mathrm{Int}(\rho(g_1)(R_{min}))\cap \mathrm{Int}(S_2)\neq \emptyset$ and by Item (4) of Proposition \ref{p.fullyorientablemarkovstepone}, $\widetilde{S_2}\in \mathfrak{Cl}(\widetilde{\rho}(g_1)(\widetilde{R_{min}}))$. However, by the definition of $\widetilde{R_{min}}$, we have that $\rho(g_1)(R_{min})$ is a predecessor of $S_2$ of some generation; hence, no point in the $\mathcal{F}^s$-boundary of $\rho(g_1)(R_{min})$ can intersect the interior of $S_2$, which is incompatible with the existence of $\widetilde{x}$ and yields the desired result.

\vspace{0.15cm}      
\textit{$\widetilde{\mathcal{R}_{new}}$ satisfies Item (4) of Proposition \ref{p.fullyorientablemarkovstepone}.}

Consider two distinct rectangles $\widetilde{S_1},\widetilde{S_2}\in \widetilde{\mathcal{R}_{new}}$ such that $\widetilde{S_1}\cap \widetilde{S_2}\neq \emptyset$ and $\mathrm{Int}(\pi(\widetilde{S_1}))\cap \mathrm{Int}(\pi(\widetilde{S_2}))\neq \emptyset$. If both $\widetilde{S_1}$ and $\widetilde{S_2}$ belong in $\widetilde{\mathcal{R}}$, then desired result is an immediate consequence of Item (4) of Proposition \ref{p.fullyorientablemarkovstepone}. Next, assume that $\widetilde{S_1}=\widetilde{\Phi}^{-\epsilon}(\widetilde{\rho}(g)(\widetilde{R_{min}}))$ for some $g\in G$ and that $\widetilde{S_2}\in \widetilde{\mathcal{R}}$. In this case, $\widetilde{S_1}$ and $\widetilde{S_2}$ can not verify the previous conditions, thanks to our choice of $\epsilon$. 

Finally, assume that $\widetilde{S_1}=\widetilde{\Phi}^{-\epsilon}(\widetilde{\rho}(g_1)(\widetilde{R_{min}}))$ and $\widetilde{S_2}=\widetilde{\Phi}^{-\epsilon}(\widetilde{\rho}(g_2)(\widetilde{R_{min}}))$ for some $g_1,g_2\in G$ with $g_1\neq g_2$. Thanks to our original hypotheses, the interior of $\pi(\widetilde{S_1})=\rho(g_1)(R_{min})\in \mathcal{R}$ intersects the interior of $\pi(\widetilde{S_2})=\rho(g_2)(R_{min})\in \mathcal{R}$, while $\widetilde{\rho}(g_1)(\widetilde{R_{min}})\in \widetilde{\mathcal{R}}$ intersects $\widetilde{\rho}(g_2)(\widetilde{R_{min}})\in \widetilde{\mathcal{R}}$. By Item (4) Proposition \ref{p.fullyorientablemarkovstepone}, this implies that 
$\widetilde{\rho}(g_1)(\widetilde{R_{min}})\in \mathfrak{Cl}(\widetilde{\rho}(g_2)(\widetilde{R_{min}}))$. Since clusters define a $\widetilde{\rho}-$invariant partition of $\widetilde{\mathcal{R}}$, it follows that $\mathfrak{Cl}(\widetilde{\rho}(g_2)(\widetilde{R_{min}}))$ is invariant by $\widetilde{\rho}(g_1g_2^{-1})$. This is impossible, as $G$ is torsion-free, $g_1\neq g_2$ and $\mathfrak{Cl}(\widetilde{S_2})$ is finite.
\end{proof}

An important consequence of the last part of our proof of Claim 6 is that any two rectangles in $\widetilde{\mathcal{R}_{new}}$ that intersect along their interiors, necessarily belong in $\widetilde{\mathcal{R}}$. Using the previous fact, one can easily check that Claims~1, 2, and~4 remain true for $\widetilde{\mathcal{R}_{new}}$ and that the total number of intersections of $\widetilde{\mathcal{R}_{new}}$ is strictly smaller than $N$. Concerning Claims~3 and 5, their validity for $\widetilde{\mathcal{R}_{new}}$ follows from the same argument as the one used for $\widetilde{\mathcal{R}}$. 

Thanks to our previous arguments, we can construct from $\widetilde{\mathcal{R}}$ another $\widetilde{\rho}$-invariant collection of rectangles in $W^+$ whose total number of intersections is strictly smaller than that of $\widetilde{\mathcal{R}}$, while satisfying Items (1)-(4) of Proposition~\ref{p.fullyorientablemarkovstepone} as well as Claims~1-5. By a finite repetition of this process, we can produce a $\widetilde{\rho}$-invariant collection of rectangles $\widetilde{\mathcal{R}_{final}}$ satisfying Items (1)-(4) of Proposition \ref{p.fullyorientablemarkovstepone} and such that total number of intersections of $\widetilde{\mathcal{R}_{final}}$ is equal to zero. In other words, any two rectangles in $\widetilde{\mathcal{R}_{final}}$ have disjoint interiors. By the first part of the proof of this proposition, we deduce that the projection $\widetilde{\mathcal{R}_{final}}$ in $M$ defines a reduced Markov partition of $\Phi$, which proves the desired result.

\end{proof}

\section{The non-fully orientable case}

In Sections~\ref{s.fullyorientmanifold},~\ref{s.fullyorientflow} and~\ref{s.fullyorientmarkovpartition}, we proved Theorem~\ref{t.main} for fully orientable strong Markovian actions on the plane. Our goal in this section consists in showing how one can remove the full orientability hypothesis.

Let $( \mathcal{P}, \mathcal{F}^s, \mathcal{F}^u)$ be a bifoliated plane endowed with an orientation preserving non-fully orientable strong Markovian action $\rho$. By Proposition \ref{p.cornerconditionexists}, $\rho$ preserves a strong Markovian family $\mathcal{R}$ satisfying the corner condition. Fix for the rest of this section an orientation of $\mathcal{F}^s$ and $\mathcal{F}^u$. 

Denote by $G_+$ the index two subgroup of $G$, whose action on $\mathcal{P}$ via $\rho$ preserves the orientations of both $\mathcal{F}^s$ and $\mathcal{F}^u$. Denote also by $\rho_+: G_+\rightarrow \text{Homeo}( \mathcal{P}) $ the restriction of $\rho$ on $G_+$. The action $\rho_+$ is a fully orientable strong Markovian action. Similarly to the previous sections, we define $W^+:=\{( x,y) \in \mathcal{P}^2\mid y\in \mathcal{F}^s_+( x) -\{x\}\}$ and $\pi: W^+\rightarrow \mathcal{P}$ by $\pi( ( x,y))=x$ for every $( x,y)\in W^+$.

By Proposition~\ref{p.wplusisasimplyconnectedmanifold} and our discussion in Section~\ref{s.fullyorientflow}, $W^+$ is homeomorphic to $\mathbb{R}^3$ and $\pi$ defines a trivial line bundle over $\mathcal{P}$ whose fiber over each $x\in \mathcal{P}$ is $\{( x,y)|y\in \mathcal{F}_+^s( x)-\{x\}\}$. Moreover, by Propositions \ref{p.actionproperdisc} and \ref{p.actionboundedfund}, the diagonal action of $G_+$ on $W^+$, say $\widetilde{\rho_+}:G_+\rightarrow \text{Homeo}( W^+)$, is properly discontinuous and admits a bounded fundamental domain. Denote by $M_+$ the quotient of $W^+$ by $\widetilde{\rho_+}$. The set of fibers  of $\pi$ defines an $1$-dimensional foliation on $W^+$ that is invariant by $\widetilde{\rho_+}$ and whose projection on $M_+$ endowed with any non-singular parametrization defines a topological Anosov flow $\Phi_+$ ( see Proposition \ref{p.pseudoanosov}). 

Recall that by our discussion at the end of Section \ref{s.fullyorientflow}, 

\begin{itemize} 
    \item $\pi_1( M_+)\cong G_+$.
    \item $( \mathcal{P},\mathcal{F}^s,\mathcal{F}^u)$ is the bifoliated plane of $\Phi_+$. 
    \item The action of $\pi_1( M_+)$ on $W^+$ by deck transformations coincides with $\widetilde{\rho_+}$. 
    \item $\rho_+$ corresponds to the action of $\pi_1( M_+)$ on the bifoliated plane of $\Phi_+$.
\end{itemize}

In view of Theorem \ref{t.main}, we would now like to construct an Anosov flow on a closed 3-manifold, whose fundamental group is exactly $G$. In order to do so, we will prove that the action of $\widetilde{\rho_+}$ can be extended to a properly discontinuous action of $G$ on $W^+$.

\needspace{5\baselineskip}
\subsection{From a non-fully orientable strong Markovian action to an Anosov flow}

\begin{prop}\label{p.homeoconstructalmostorient}
   Let $h\in G-G_+$. There exists $f_h$ a homeomorphism of $M_+$ and a lift $\widetilde{f_h}$ of $f_h$ on $W^+$ such that: 
    \begin{enumerate}
        \item $f_h$ defines a self-orbit equivalence of $\Phi_+$ 
        \item $f_h\circ f_h=id$ 
        \item $f_h$ has no fixed points in $M_+$ 
        \item $\rho( h)\circ\pi=\pi \circ \widetilde{f_h}$
    \end{enumerate}
\end{prop}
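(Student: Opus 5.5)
The construction will be a fiberwise "reflection" of $W^+$ twisted by $\rho(h)$. First note that since $\rho$ preserves the orientation of $\mathcal{P}$ and $h\notin G_+$, the map $\rho(h)$ reverses the orientations of \emph{both} $\mathcal{F}^s$ and $\mathcal{F}^u$. In particular, $\rho(h)$ sends $\mathcal{F}^s_+(x)$ onto $\mathcal{F}^s_-(\rho(h)(x))$. So the naive map $H(x,y)=(\rho(h)(x),\rho(h)(y))$ lands in
$$W^-:=\{(x,y)\mid y\in\mathcal{F}^s_-(x)-\{x\}\},$$
not in $W^+$. The plan is to correct $H$ by a fiber-preserving homeomorphism $\Psi:W^-\to W^+$ that covers the identity of $\mathcal{P}$ and is compatible with the whole group $G$. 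We will then set $\widetilde{f_h}:=\Psi\circ H$. Item (4) holds automatically for this map, because $\pi\circ\Psi=\pi$ and $\pi\circ H=\rho(h)\circ\pi$.

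\textbf{Step 1: a $G$-invariant fiberwise length.} We want a continuous function
$$\ell:\{(x,y)\mid y\in\mathcal{F}^s(x)-\{x\}\}\to(0,+\infty)$$
that is strictly increasing as $y$ moves away from $x$ along either separatrix, tends to $0$ as $y\to x$ and to $+\infty$ as $y$ goes to infinity, and is invariant under the diagonal action of \emph{all} of $G$. For the restriction of $\ell$ to $W^+$ we can use the flow time of $\Phi_+$: choose $\ell_+(x,y)=e^{\tau}$, where $\tau$ is the signed $\widetilde{\Phi_+}$-time from a fixed $G_+$-equivariant section. Such a section exists because $W^+\to\mathcal{P}$ is a trivial bundle with a free, cocompact $G_+$-action; one can take, for instance, the lift of a global transverse cross-section given by a $G_+$-invariant partition of unity. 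Applying the same construction to $W^-$ (the flow $\Phi_-$ obtained from Sections~\ref{s.fullyorientmanifold} and \ref{s.fullyorientflow} with the orientation of $\mathcal{F}^s$ reversed) gives $\ell_-$. Together $\ell_+$ and $\ell_-$ define a $G_+$-invariant function $\ell_0$. We then symmetrize over the coset, setting $\ell:=\ell_0+\ell_0\circ H$. Because $h^2\in G_+$, this $\ell$ is $G$-invariant. Monotonicity and the limiting behaviour survive the sum, since $H$ maps fibers to fibers monotonically and preserves the end "near $x$".

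\textbf{Step 2: definition and properties of $\widetilde{f_h}$.} Define $\Psi(x,y)=(x,y')$, where $y'$ is the unique point of $\mathcal{F}^s_+(x)-\{x\}$ with $\ell(x,y')=\ell(x,y)$. This $\Psi$ is a homeomorphism $W^-\to W^+$ which is increasing along fibers. Set $\widetilde{f_h}=\Psi\circ H$. By $G$-invariance of $\ell$, the map $\widetilde{f_h}$ is characterized by two conditions: $\pi\circ\widetilde{f_h}=\rho(h)\circ\pi$, and $\ell\circ\widetilde{f_h}=\ell$. From this characterization we get the conjugation relation
$$\widetilde{f_h}\circ\widetilde{\rho_+}(g)=\widetilde{\rho_+}(hgh^{-1})\circ\widetilde{f_h}\qquad (g\in G_+).$$
Since $G_+$ is normal in $G$, this shows that $\widetilde{f_h}$ descends to a homeomorphism $f_h$ of $M_+$. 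It maps $\widetilde{\Phi_+}$-orbits (the fibers of $\pi$) to orbits, preserving their direction because $\ell$ is preserved. Together with the fact that $\rho(h)$ preserves $\mathcal{F}^s$ and $\mathcal{F}^u$, this gives Item (1). By the same characterization, $\widetilde{f_h}^2$ covers $\rho(h^2)$ and preserves $\ell$, so $\widetilde{f_h}^2=\widetilde{\rho_+}(h^2)$, which is a deck transformation. Hence $f_h\circ f_h=\mathrm{id}$, which is Item (2).

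\textbf{Step 3: no fixed points.} Suppose $f_h$ fixes a point. Then $\widetilde{f_h}(w)=\widetilde{\rho_+}(g)(w)$ for some $w\in W^+$ and $g\in G_+$. Put $k:=g^{-1}h\notin G_+$ and $F:=\widetilde{\rho_+}(g)^{-1}\circ\widetilde{f_h}$; this map fixes $w$, and by the characterization of Step 2 it covers $\rho(k)$ and preserves $\ell$. Its square covers $\rho(k^2)$ and preserves $\ell$, so $F^2=\widetilde{\rho_+}(k^2)$, and $F^2$ also fixes $w$. Freeness of $\widetilde{\rho_+}$ (Proposition~\ref{p.actionproperdisc}) then forces $k^2=e$. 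Since $k\neq e$, this contradicts the fact that $G$ has no torsion, which gives Item (3).

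I expect the main obstacle to be Step 1. The delicate points are producing the $G_+$-equivariant section of $W^\pm$ carefully enough that $\ell_0$ is genuinely continuous, strictly monotone and proper on each fiber, and checking that these properties persist after symmetrizing with $H$. Once $\ell$ is in hand, the remaining steps are formal.
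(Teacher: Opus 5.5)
Step 1 asks for an object that cannot exist, and everything after it depends on it. Restricted to $W^+$, your $\ell$ would be invariant under $\widetilde{\rho_+}(G_+)$, and it would be strictly monotone and unbounded along every fiber of $\pi$, that is, along every orbit of $\widetilde{\Phi}_+$. It would therefore descend to a continuous function on the closed manifold $M_+$ that is unbounded along every orbit of $\Phi_+$, which is absurd.

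The obstruction is also visible in the plane. Suppose $\rho(g)$ fixes $x$ for some $g\in G_+-\{e\}$; such periodic points abound by Theorem~\ref{t.markovianisanosovlike}. Then $\widetilde{\rho_+}(g)$ preserves the fiber $\pi^{-1}(x)$. Because it acts freely (Proposition~\ref{p.actionproperdisc}), it moves every point of that fiber strictly toward the same end. So no function on that fiber can be both invariant and strictly monotone.

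For the same reason, the $G_+$-equivariant section you invoke does not exist: its value at such an $x$ would be fixed by $\widetilde{\rho_+}(g)$. Signed flow time is a cocycle, not a $G_+$-invariant function. Lifting a cross-section of $\Phi_+$, when one exists at all, gives a surface meeting each fiber infinitely often, not once. As a result, the characterization of $\widetilde{f_h}$ by $\pi\circ\widetilde{f_h}=\rho(h)\circ\pi$ and $\ell\circ\widetilde{f_h}=\ell$ is unavailable. With it go your derivations of the twisted equivariance, of $\widetilde{f_h}^2=\widetilde{\rho_+}(h^2)$, and of Items (1) and (2).

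Two separate inputs are actually needed, and this is how the paper proceeds.
\begin{enumerate}
\item One needs some homeomorphism $\widetilde{f_0}$ of $W^+$ that covers $\rho(h)$, satisfies $\widetilde{f_0}\circ\widetilde{\rho_+}(g)=\widetilde{\rho_+}(hgh^{-1})\circ\widetilde{f_0}$ for $g\in G_+$, and maps oriented orbits of $\widetilde{\Phi}_+$ to oriented orbits. This is a genuine existence result, not something a fiberwise formula provides. The paper gets it from Barbot's theorem that a conjugacy of the actions on the orbit space is realized by an orbit equivalence (Proposition 1.36 of his habilitation).
\item One then corrects $\widetilde{f_0}$ so that its square is exactly the deck transformation. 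Set $\widetilde{f_h}:=\widetilde{\Phi}_+^{T}\circ\widetilde{f_0}$, where $T(x)$ is the unique real number with $\widetilde{\Phi}_+^{T(x)}\circ\widetilde{f_0}\circ\widetilde{\Phi}_+^{T(x)}\circ\widetilde{f_0}(x)=\widetilde{\rho_+}(h^2)(x)$. One checks that $T$ is continuous, $\widetilde{\rho_+}$-invariant, and unchanged under $x\mapsto\widetilde{\Phi}_+^{T(x)}\circ\widetilde{f_0}(x)$. This gives $\widetilde{f_h}^2=\widetilde{\rho_+}(h^2)$ and the twisted equivariance.
\end{enumerate}

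Your reflection picture survives only in a weaker form. A $G_+$-equivariant, fiber-preserving homeomorphism $\Psi:W^-\to W^+$ covering the identity does exist, but obtaining it again requires Barbot's theorem. Even then, $\Psi\circ H$ must still be corrected as above so that it squares to $\widetilde{\rho_+}(h^2)$.

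Once those two relations are established, your Step 3 is correct and is essentially the paper's argument. It uses nothing more than those relations: $(\widetilde{\rho_+}(g)^{-1}\circ\widetilde{f_h})^2=\widetilde{\rho_+}(k^2)$ with $k=g^{-1}h$ follows by direct computation, and freeness together with torsion-freeness finishes the proof.
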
 

\begin{proof}
    Denote by $\alpha$ the inner automorphism of $G$ defined by $\alpha( g)=hgh^{-1}$ for every $g\in G$. Note that $\alpha( G_+)=G_+$. The map $\rho( h)$ defines a homeomorphism of $\mathcal{P}$ such that $\rho( h)( \mathcal{F}^{s,u})=\mathcal{F}^{s,u}$ and also such that for every $g\in G$ and every $x\in \mathcal{P}$ $$\rho( h)( \rho( g)( x))=\rho( \alpha( g))( \rho( h)( x))$$

    By Proposition 1.36 of \cite{hdrbarbot}, there exists $f_{0}:M_+\rightarrow M_+$ a homeomorphism defining self-orbital equivalence of $\Phi_+$ and $\widetilde{f_0}$ a lift of $f_0$ on $W^+$ such that 
    \begin{equation}\label{eq.projectionf0}
        \rho( h)\circ\pi=\pi \circ \widetilde{f_0}
    \end{equation}

    Even though $\widetilde{f_0}$ projects to $\rho( h)$ on $\mathcal{P}$, the group generated by $\widetilde{f_0}$ and $\widetilde{\rho_+}( G_+)$ is not necessarily isomorphic to $G$; in particular, $\widetilde{f_0}\circ \widetilde{f_0}$ may not even belong to the group $\widetilde{\rho_+}( G_+)$. In order to address this problem, we perturb $\widetilde{f_0}$ by flowing it for a suitable time along $\widetilde{\Phi}_+$, the lift of $\Phi_+$ to $W^+$. More specifically, we will define a continuous function $T:W^+\rightarrow \mathbb{R}$ such that the map $\widetilde{f_h}:=\widetilde{\Phi}_+^{T}\circ \widetilde{f_0}$ satisfies
    $$
        \widetilde{f_h}\circ \widetilde{f_h}=\widetilde{\rho_+}( h^2)  
    $$
  Once we construct and study the properties of $\widetilde{f_h}$, we will then show that its projection on $M_+$ defines a homeomoprhism $f_h$ satisfying Items (1)-(4) of this proposition.

\vspace{0.5cm}
    \textit{On the existence of $T$}

    Let us first show that for every $x\in W^+$ there exists a unique $T( x)\in \mathbb{R}$ such that 
    \begin{equation}\label{eq.defiT}
        \left( \widetilde{\Phi}_+^{T( x)}\circ \widetilde{f_0}\circ \widetilde{\Phi}_+^{T( x)}\circ \widetilde{f_0}\right)( x)=\widetilde{\rho_+}( h^2)( x)
    \end{equation} 
    
    Consider $j:\mathbb{R}\rightarrow W^+$ the embedding defined by $j( t)=\widetilde{\Phi}_+^{t}\left( \widetilde{\rho_+}( h^2)( x)\right)$ for every $t\in \mathbb{R}$. By definition, the homeomorphism  $\widetilde{f_0}$ preserves the positive and negative orbits of $\widetilde{\Phi}_+$
    and also verifies $$\pi\circ ( \widetilde{f_0}\circ \widetilde{f_0})=\rho( h^2)\circ \pi$$ It follows that for every $t\in \mathbb{R}$ the points $\left( \widetilde{\Phi}_+^{t}\circ \widetilde{f_0}\circ \widetilde{\Phi}_+^{t}\circ \widetilde{f_0}\right)( x)$ and $\widetilde{\rho_+}( h^2)( x)$ belong in the same $\widetilde{\Phi}_+$-orbit and also that the map $i:\mathbb{R}\rightarrow \mathbb{R}$ defined by $i( t)=j^{-1}\left( \widetilde{\Phi}_+^{t}\circ \widetilde{f_0}\circ \widetilde{\Phi}_+^{t}\circ \widetilde{f_0}( x)\right)$ is an increasing homeomorphism of $\mathbb{R}$. We deduce that there exists a unique $T( x)\in \mathbb{R}$ such that $\left( \widetilde{\Phi}_+^{T( x)}\circ \widetilde{f_0}\circ \widetilde{\Phi}_+^{T( x)}\circ \widetilde{f_0}\right)( x)=\widetilde{\rho_+}( h^2)( x)$. 

    Next, let us prove that for every $x\in W^+$ we have that \begin{equation}\label{eq.firstinvariance}
        T \left(( \widetilde{\Phi}_+^{T( x)}\circ \widetilde{f_0})( x)\right)=T( x)    \end{equation}

    By our previous arguments, $T_0=T\left((\widetilde{ \Phi}_+^{T( x)}\circ \widetilde{f_0})( x)\right)$ is the unique real number for which $$\left( \widetilde{\Phi}_+^{T_0}\circ \widetilde{f_0}\circ \widetilde{\Phi}_+^{T_0}\circ \widetilde{f_0}\right)\left(( \widetilde{\Phi}_+^{T( x)}\circ \widetilde{f_0})( x)\right)=\widetilde{\rho_+}( h^2)\left(( \widetilde{ \Phi}_+^{T( x)}\circ \widetilde{f_0})( x)\right)$$ Therefore, in order to prove that $T_0=T( x)$ it suffices to prove that 
    \begin{equation}\label{eq.sufficient}
        \left( \widetilde{\Phi}_+^{T( x)}\circ \widetilde{f_0}\circ \widetilde{\Phi}_+^{T( x)}\circ \widetilde{f_0}\right)\left((\widetilde{\Phi}_+^{T( x)}\circ \widetilde{f_0})( x)\right)=\widetilde{\rho_+}( h^2)\left( (\widetilde{\Phi}_+^{T( x)}\circ \widetilde{f_0})( x)\right)
    \end{equation}

    First, notice that Equation~\eqref{eq.projectionf0}, together with the facts that $\widetilde{\rho_+}$ coincides with the action of $\pi_1(M_+)\cong G_+$ on $W^+$ by deck transformations and that $\widetilde{f_0}$ is the lift of a homeomorphism of $M_+$, implies that
    \begin{equation}\label{eq.almostcommutef0}
        \widetilde{f_0}\circ \widetilde{\rho_+}( g)=\widetilde{\rho_+}( \alpha( g)) \circ \widetilde{f_0}
    \end{equation}

Moreover, since $\widetilde{\rho_+}$ preserves $\widetilde{\Phi}_+$,

    \begin{align*}
       \left( \widetilde{\Phi}_+^{T( x)}\circ \widetilde{f_0}\circ \widetilde{\Phi}_+^{T( x)}\circ \widetilde{f_0}\right)\left((\widetilde{ \Phi}_+^{T( x)}\circ \widetilde{f_0})( x)\right)&=  \left( \widetilde{\Phi}_+^{T( x)}\circ \widetilde{f_0}\right)\left( ( \widetilde{\Phi}_+^{T( x)}\circ \widetilde{f_0}\circ \widetilde{\Phi}_+^{T( x)}\circ \widetilde{f_0})( x)\right)\\
       &=\left( \widetilde{\Phi}_+^{T( x)}\circ \widetilde{f_0}\right)\big( \widetilde{\rho_+}( h^2)( x)\big)\\&=\left( \widetilde{\Phi}_+^{T( x)}\circ\widetilde{\rho_+}( \alpha( h^2))\right)( \widetilde{f_0}( x)) \\&=\left( \widetilde{\Phi}_+^{T( x)}\circ \widetilde{\rho_+}( h^2)\right)( \widetilde{f_0}( x))\\&=\widetilde{\rho_+}( h^2)\left((\widetilde{ \Phi}_+^{T( x)}\circ \widetilde{f_0})( x)\right)
    \end{align*}
     We have thus proven Equation (\ref{eq.sufficient}), which in turn proves Equation (\ref{eq.firstinvariance}). Finally, thanks to Equations (\ref{eq.defiT}) and (\ref{eq.firstinvariance}) we have that for every $x\in W^+$, if $y=( \widetilde{\Phi}_+^{T( x)}\circ \widetilde{f_0})( x)$, then 
$$\left( \widetilde{\Phi}_+^{T( y)}\circ \widetilde{f_0}\circ \widetilde{\Phi}_+^{T( x)}\circ \widetilde{f_0}\right)( x)=\widetilde{\rho_+}( h^2)( x)$$
    which proves the desired result, namely that $\widetilde{f_h}\circ \widetilde{f_h}=\rho( h^2)$.

\vspace{0.5cm}
    \textit{The map $\widetilde{f_h}$ is a homeomorphism of $W^+$}

    Indeed, since $\widetilde{f_h}\circ \widetilde{f_h}$ is a homeomorphism of $W^+$, the map $\widetilde{f_h}$ is clearly bijective. Moreover, since $\widetilde{f_h}\circ \widetilde{f_h}=\widetilde{\rho_+}( h^{2})$ we have that $( \widetilde{f_h})^{-1}=\widetilde{\rho_+}( h^{-2})\circ \widetilde{f_h}$. We deduce that in order to show that $\widetilde{f_h}$ is a homeomorphism it suffices to show that  $\widetilde{f_h}$ is continuous. Consider $( x_n)_{n\in\mathbb{N}}$ a sequence of points in $W^+$ converging to $x_{\infty}$. By  Equation (\ref{eq.defiT}) we have that for every $n\in \mathbb{N}$ 
    \begin{equation}\label{eq.sequence}
        \left( \widetilde{\Phi}_+^{T( x_n)}\circ \widetilde{f_0}\circ \widetilde{\Phi}_+^{T( x_n)}\circ \widetilde{f_0}\right)( x_n)=\widetilde{\rho_+}( h^2)( x_n)
    \end{equation}
    
    Using the fact that the orbits of $\widetilde{\Phi}_+$ define a trivial line bundle over $\mathcal{P}$ and that $\widetilde{\Phi}_+$ is is the lift on $W^+$ of a non-singular flow of the closed manifold $M_+$, we get that for any compact $K\subset W^+$ there exists $t_0( K)>0$ such that for any $t>t_0$ and any $y\in W^+$ close to $x_{\infty}$ we have that $$\left( \widetilde{\Phi}_+^{t}\circ \widetilde{f_0}\circ \widetilde{\Phi}_+^{t}\circ \widetilde{f_0}\right)( y)\notin K$$ 
    
    Thanks to the previous fact, to Equation (\ref{eq.sequence}) and to the fact that $\widetilde{\rho_+}( h^2)(x_n)$ converges to $\widetilde{\rho_+}( h^2)(x_{\infty})$ as $n\rightarrow +\infty$, we deduce that the sequence $( T( x_n))_{n\in\mathbb{N}}$ is bounded. By possibly extracting a subsequence assume that  $( T( x_n))_{n\in\mathbb{N}}$ converges to $T_{\infty}$. By Equations (\ref{eq.defiT}) and (\ref{eq.sequence}) we get that
    $$ \left( \widetilde{\Phi}_+^{T_{\infty}}\circ \widetilde{f_0}\circ \widetilde{\Phi}_+^{T_\infty}\circ \widetilde{f_0}\right)( x_\infty)=\widetilde{\rho_+}( h^2)( x_\infty)$$ 
     $$\left( \widetilde{\Phi}_+^{T( x_\infty)}\circ \widetilde{f_0}\circ \widetilde{\Phi}_+^{T( x_\infty)}\circ \widetilde{f_0}\right)( x_\infty)=\widetilde{\rho_+}( h^2)( x_\infty)$$

    Recall now that $T(x_\infty)$ is the unique real number $t$ for which $\left( \widetilde{\Phi}_+^{t}\circ \widetilde{f_0}\circ \widetilde{\Phi}_+^{t}\circ \widetilde{f_0}\right)( x_\infty)=\widetilde{\rho_+}( h^2)( x_\infty)$. We deduce that $T_\infty=T( x_\infty)$; hence, $T$ and $\widetilde{f_h}$ are both continuous, which proves the desired result.  

    \vspace{0.5cm}
    \textit{The homeomorphism $\widetilde{f_h}$ projects to a homeomorphism $f_h$ of $M_+$}

    In order to prove the previous statement, let us first prove that the function $T:W^+\rightarrow \mathbb{R}$ is invariant under the action of $\widetilde{\rho_+}$. Consider $g\in G_+$ and $x\in W^+$. Since $T( \widetilde{\rho_+}( g)( x))$ is the unique real number $t$ for which 
$$\left( \widetilde{\Phi}_+^{t}\circ \widetilde{f_0}\circ \widetilde{\Phi}_+^{t}\circ \widetilde{f_0}\right)( \widetilde{\rho_+}( g)( x))=\widetilde{\rho_+}( h^2)( \widetilde{\rho_+}( g)( x))$$
in order to prove that $T( \widetilde{\rho_+}( g)( x))=T( x)$, it suffices to show that 
\begin{equation}\label{eq.sufficient2}
        \left( \widetilde{\Phi}_+^{T( x)}\circ \widetilde{f_0}\circ \widetilde{\Phi}_+^{T( x)}\circ \widetilde{f_0}\right)( \widetilde{\rho_+}( g)( x))=\widetilde{\rho_+}( h^2)( \widetilde{\rho_+}( g)( x))
    \end{equation}

    Notice first that by Equation (\ref{eq.defiT}), we have that
$$\left( \widetilde{\Phi}_+^{T( x)}\circ \widetilde{f_0}\circ \widetilde{\Phi}_+^{T( x)}\circ \widetilde{f_0}\right)( x)=\widetilde{\rho_+}( h^2)( x)$$ Next, using  Equation (\ref{eq.almostcommutef0}) and the fact that $\widetilde{\rho_+}$ preserves $\widetilde{\Phi}_+$, we get that 

\begin{align*}
    \widetilde{\rho_+}( h^2)( \widetilde{\rho_+}( g)( x))&=\widetilde{\rho_+}( h^2gh^{-2})( \widetilde{\rho_+}( h^2)( x))\\&=\left( \widetilde{\rho_+}( h^2gh^{-2})\circ\widetilde{\Phi}_+^{T( x)}\circ \widetilde{f_0}\circ \widetilde{\Phi}_+^{T( x)}\circ \widetilde{f_0}\right)( x)\\
     & =\left( \widetilde{\Phi}_+^{T( x)}\circ\widetilde{\rho_+}( h^2gh^{-2}) \circ \widetilde{f_0}\circ \widetilde{\Phi}_+^{T( x)}\circ \widetilde{f_0}\right)( x)\\
    &=\left( \widetilde{\Phi}_+^{T( x)}\circ\widetilde{f_0}\circ \widetilde{\rho_+}( hgh^{-1}) \circ \widetilde{\Phi}_+^{T( x)}\circ \widetilde{f_0}\right)( x)\\
    &=\left( \widetilde{\Phi}_+^{T( x)}\circ\widetilde{f_0}\circ\widetilde{\Phi}_+^{T( x)}\circ \widetilde{\rho_+}( hgh^{-1})\circ \widetilde{f_0}\right)( x)\\
   &=\left( \widetilde{\Phi}_+^{T( x)}\circ\widetilde{f_0}\circ\widetilde{\Phi}_+^{T( x)}\circ \widetilde{f_0}\right)( \widetilde{\rho_+}( g)( x))
\end{align*}

We have thus proven that Equation (\ref{eq.sufficient2}) holds, which in turn proves that $T$ is invariant by the action of $\widetilde{\rho_+}$.  We are now ready to prove that $\widetilde{f_h}$ projects to a homeomorphism of $M_+$. In order to do so, it suffices to prove that for any $g\in G_+\cong \pi_1( M_+)$ 
\begin{equation}\label{eq.almostcommutefh}
    \widetilde{\rho_+}( g)\circ \widetilde{f_h}=\widetilde{f_h}\circ \widetilde{\rho_+}( h^{-1}gh)
\end{equation}
The above equation follows from the invariance of $T$ by $\widetilde{\rho_+}$ and Equation (\ref{eq.almostcommutef0}). More specifically, for every $x\in W^+$
\begin{align*}
    ( \widetilde{\rho_+}( g)\circ \widetilde{f_h})( x)&= \left( \widetilde{\rho_+}( g)\circ \widetilde{\Phi}^{T( x)}\circ \widetilde{f_0}\right)( x)\\
    &=\left( \widetilde{\Phi}^{T( x)}\circ\widetilde{\rho_+}( g)\circ \widetilde{f_0}\right)( x)\\
    &=\left( \widetilde{\Phi}^{T( x)}\circ\widetilde{f_0}\circ \widetilde{\rho_+}( h^{-1}gh)\right)( x)
    \\
    &=\left( \widetilde{\Phi}^{T( \widetilde{\rho_+}( h^{-1}gh)( x))}\circ\widetilde{f_0}\right)\big( \widetilde{\rho_+}( h^{-1}gh)( x)\big)\\
    &= \left( \widetilde{f_h}\circ \widetilde{\rho_+}( h^{-1}gh)\right)( x)
\end{align*}
which proves the desired result. We deduce that $\widetilde{f_h}$ projects to a homeomorphism $f_h$ of $M_+$. 

\vspace{0.5cm}
\textit{The homeomorphism $f_h$ satisfies Items (1)-(4) of this proposition}

Indeed, since $\widetilde{f_0}$ preserves the positive and negative orbits of $\widetilde{\Phi}_+$, the definition of $\widetilde{f_h}$ implies that $\widetilde{f_h}$ also preserves the positive and negative orbits of $\widetilde{\Phi}_+$. We deduce that $f_h$ is a self-orbit equivalence of $\Phi_+$. Next, by construction $\widetilde{f_h}\circ \widetilde{f_h}=\widetilde{\rho_+}( h^2)$, where $h^2\in G_+\cong \pi_1( M_+)$. It follows that $f_h\circ f_h=id_{M_+}$. 

Concerning Item (3) of this proposition, assume that $f_h$ admits a fixed point in $M_+$. In this case, there exists $\widetilde{F_h}$ a lift of $f_h$ on $W^+$ that also admits a fixed point. By a classical result of covering space theory, one can find $g\in G_+$ such that $\widetilde{F_h}=\widetilde{\rho_+}( g)\circ \widetilde{f_h}$. Using Equation (\ref{eq.almostcommutefh}), we get that 

\begin{align*}
    \widetilde{F_h}\circ \widetilde{F_h}&= \widetilde{\rho_+}( g)\circ \widetilde{f_h}\circ \widetilde{\rho_+}( g)\circ \widetilde{f_h}\\&=\widetilde{\rho_+}( g)\circ \widetilde{f_h}\circ \widetilde{f_h}\circ \widetilde{\rho_+}( h^{-1}gh)\\
    &=\widetilde{\rho_+}( g)\circ \widetilde{\rho_+}( h^2)\circ \widetilde{\rho_+}( h^{-1}gh)\\
    &=\widetilde{\rho_+}( ghgh)
\end{align*}
By the above fact, since $\widetilde{F_h}$ admits a fixed point in $W^+$, then $\widetilde{\rho_+}( ghgh)$ also admits a fixed point in $W^+$. Moreover, since $\widetilde{\rho_+}$ coincides with the action of $G_+$ by deck transformations on $W^+$, we get  that $\widetilde{\rho_+}( ghgh)=id_{W^+}$ and that $ghgh$ is trivial in $G_+$. This is impossible, as $g\in G_+$, $h\in G-G_+$ and $G$ has no torsion.

Finally, let us prove that $\widetilde{f_h}$ satisfies Item (4) of this proposition. By Equation (\ref{eq.projectionf0}), we have that for every $x\in W^+$ 
\begin{align*}
    ( \pi \circ \widetilde{f_h})( x)&= ( \pi\circ\widetilde{\Phi}^{T( x)}\circ\widetilde{f_0})( x)\\&=( \pi\circ \widetilde{f_0})( x)=( \rho( h)\circ\pi)( x)
\end{align*} 
which proves the desired result and finishes the proof of this proposition. 
\end{proof}

Fix $h\in G-G_+$, $f_h$ the homeomorphism given by Proposition \ref{p.homeoconstructalmostorient} and $\widetilde{f_h}$ the unique lift of $f_h$ on $W^+$ for which 
\begin{equation}\label{eq.projectionrelation}
       \rho( h)\circ \pi = \pi \circ \widetilde{f_h}  
\end{equation}

Since $f_h$ has no fixed points in $M_+$, $f_h$ acts properly discontinuously on $M_+$. Take $M$ to be the quotient of $M_+$ by the action of $f_h$. Using the fact that $\widetilde{\rho_+}$ coincides with the action of $\pi_1( M_+)$ on $W^+$ by deck transformations, it is easy to see that 
\begin{equation}\label{eq.fundgroupgisomorphism}
    \pi_1( M)\cong G \cong <\widetilde{\rho_+}( G_+),\widetilde{f_h}>
\end{equation} 
Next, since $f_h$ is an self-orbit equivalence of $\Phi_+$, the one dimensional foliation given by the orbits of $\Phi_+$ projects to an one dimensional foliation on $M$ that can be parametrized by a non-singular flow $\Phi$, thanks to Theorem 27A of \cite{Whitney}. Even more, using the fact that $\Phi_+$ is a topological Anosov flow, one can check that $\Phi$ is also a topological Anosov flow and that the projection on $M$ of the stable (resp. unstable) foliation of $\Phi_+$ coincides with the stable (resp. unstable) foliation of $\Phi$. 

Denote by $\widetilde{\Phi}$ the lift of $\Phi$ on $W^+$. By construction, the one dimensional foliation in $W^+$ defined by the orbits of $\widetilde{\Phi}$ coincides with the one dimensional foliation in $W^+$ defined by the orbits of $\widetilde{\Phi}_+$; hence, $\mathcal{P}$ is the orbit space of both $\widetilde{\Phi}$ and $\widetilde{\Phi}_+$. Similarly, the lifts of the stable (resp. unstable) foliations of $\Phi$ and $\Phi_+$ on $W^+$ coincide. We deduce that $( \mathcal{P},\mathcal{F}^s,\mathcal{F}^u)$ corresponds to the bifoliated plane of both $\Phi_+$ and $\Phi$. 

Finally, let $\widetilde{\rho}: G\rightarrow \text{Homeo}( W^+)$ be the action defined by $\widetilde{\rho}( h)=\widetilde{f_h}$ and $\widetilde{\rho}( g)=\widetilde{\rho_+}( g)$ for every $g\in G_+$. By construction, $\widetilde{\rho}$ is an extension of $\widetilde{\rho_+}$ and corresponds to the action by deck transformations of $\pi_1( M)$ on $W^+$. Recall now that since $\widetilde{\rho_+}$ coincides with the diagonal action of $\rho_+$ on $W^+$, we have that 
\begin{equation}\label{eq.projectionrelationrhoplus}
    \forall g\in G_+ \quad   \rho_+( g)\circ \pi = \pi \circ \widetilde{\rho_+}( g)  
\end{equation}
Using the previous fact, together with Item (4) of Proposition \ref{p.homeoconstructalmostorient}, we get that 
\begin{equation}\label{eq.projectionrho}
    \forall g\in G \quad   \rho( g)\circ \pi = \pi \circ \widetilde{\rho}( g)  
\end{equation}
We deduce that the action $\rho$ corresponds to the action of $\pi_1( M)\cong G$ on the bifoliated plane of $\Phi$.

Thanks to our previous arguments, in order to prove Theorem \ref{t.main} in the case of non-fully orientable strong Markovian actions, it suffices to prove that $\mathcal{R}$ is the projection on $\mathcal{P}$ of the lift on $W^+$ of a reduced Markov partition of $\Phi$. This is the object of our next and final section.

\subsection{From a non-fully orientable strong Markovian action to a Markov partition}

\begin{prop}\label{p.perturbmarkov}
    There exists $\widetilde{\mathcal{R}}$ a $\widetilde{\rho}$-invariant collection of rectangles in $W^+$ such that 
    
    \begin{itemize}
        \item $\widetilde{\mathcal{R}}$ is a lift of $\mathcal{R}$ on $W^+$ for which each rectangle of $\mathcal{R}$ admits a unique lift in $\widetilde{\mathcal{R}}$.
        \item The projection of $\widetilde{\mathcal{R}}$ on $M$ defines a reduced Markov partition $\mathcal{R}_M$ of $\Phi$. 
    \end{itemize}
\end{prop}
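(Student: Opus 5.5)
Starting from the fully orientable case, I will make the $\widetilde{\rho_+}$-invariant family $\widetilde{\mathcal{R}}_+$ of Proposition~\ref{p.fullyorientablemarkov} (built for $\rho_+$, which preserves the same $\mathcal{R}$) $\widetilde{f_h}$-invariant as well, then show its projection to $M$ is a reduced Markov partition of $\Phi$. Two points need care. First, $\mathcal{R}$ is $\rho_+$-invariant and splits into finitely many $\rho_+$-orbits; the rectangles in $\rho(h)(R)$ for $R\in\mathcal{R}$ lie in $\mathcal{R}$ again. Second, $\rho(h)$ may reverse the orientation of $\mathcal{F}^s$, so the big-brother construction is not equivariant under $\rho(h)$. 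Hence the lift $\widetilde{R}$ defined through $\partial^u_+\mathcal{B}(R)$ need not satisfy $\widetilde{f_h}(\widetilde{R})=\widetilde{\rho(h)(R)}$.

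To fix this I will not use the explicit lift. I will pick a set of representatives $R_1,\dots,R_k$ of the $\rho$-orbits (not $\rho_+$-orbits) of $\mathcal{R}$. By Item (1) of Theorem~\ref{t.markovianisanosovlike}, each $\rho$-orbit is the union of two $\rho_+$-orbits, namely those of $R_i$ and $\rho(h)(R_i)$, since the stabilizer is trivial. I will choose lifts $\widetilde{R_i}$ as in Proposition~\ref{p.fullyorientablemarkovstepone}, then define $\widetilde{\rho}(g)(\widetilde{R_i})$ for all $g\in G$. Well-definedness follows from freeness of the stabilizers. The result is a $\widetilde{\rho}$-invariant family $\widetilde{\mathcal{R}}$ in which each rectangle of $\mathcal{R}$ has a unique lift, because $\rho(g)\circ\pi=\pi\circ\widetilde{\rho}(g)$ by \eqref{eq.projectionrho}. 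Each lift is a section of $\pi$ over a rectangle, hence transverse to $\widetilde{\Phi}$. Projecting through the double cover $M_+\to M$, it suffices to prove that the projection of $\widetilde{\mathcal{R}}$ to $M_+$ satisfies Items (2)–(4) of Definition~\ref{d.markovpartition} together with disjointness of interiors, all invariantly under $f_h$.

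The main obstacle is that the lifts of the $\rho_+$-orbit of $\rho(h)(R_i)$ are now the $\widetilde{f_h}$-images of the lifts of $R_i$, not lifts produced via big brothers. So Item (3) (the Markov boundary condition) and the intersection structure in Item (4) of Proposition~\ref{p.fullyorientablemarkovstepone} must be reestablished. My plan is to use the other natural lift, $\widetilde{R}^-:=\{(x,y)\mid x\in R,\ (x,y)\text{ determined by the big brother with respect to the reversed orientation}\}$. Concretely, I transport by $\widetilde{f_h}$ and note that $\widetilde{f_h}$ is a self-orbit equivalence preserving the positive direction of the flow. Since $\widetilde{f_h}$ preserves orbits and time orientation, the conditions ``the positive orbit of $\partial^s$ misses interiors'' and ``the negative orbit of $\partial^u$ misses interiors'' are invariant under $\widetilde{f_h}$, as is the first-return property (finite return time, Item (2)). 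So, in principle, these properties hold for $\widetilde{f_h}$-images of an admissible family. What remains is to check them for pairs consisting of one rectangle from each $\rho_+$-orbit class.

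For such a pair $\widetilde{S_1}\in\widetilde{\rho_+}(G_+)\widetilde{R_i}$ and $\widetilde{S_2}\in\widetilde{f_h}\widetilde{\rho_+}(G_+)\widetilde{R_j}$, I will use that every orbit of $\widetilde{\Phi}$ crosses each lifted rectangle at most once. The relative flow-time between two lifts over $S_1\cap S_2$ is then a continuous function. The strategy is to run the cluster-and-push procedure from the proof of Proposition~\ref{p.fullyorientablemarkov}, pushing entire $\widetilde{\rho}$-orbits (not just $\widetilde{\rho_+}$-orbits) of rectangles by $\widetilde{\Phi}^{-\epsilon}$ or $\widetilde{\Phi}^{+\epsilon}$. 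This first arranges that, whenever $\mathrm{Int}(S_1)\cap\mathrm{Int}(S_2)\neq\emptyset$ with $S_1$ a predecessor of $S_2$, the lift of $S_1$ lies weakly below that of $S_2$ along each orbit over $S_1\cap S_2$. That ordering yields Item (3) exactly as in the proof of Proposition~\ref{p.fullyorientablemarkovstepone}; the reduced Item (4) is automatic, since two distinct rectangles of $\mathcal{R}$ are never nested. The pushes themselves are finite in number: clusters are finite, as in Claim 4, and torsion-freeness of $G$ prevents a cluster from being stabilized by $\widetilde{f_h}$-type elements, exactly as in the end of Claim 6. Once these pushes have produced pairwise disjoint interiors, the projection to $M$ is a reduced Markov partition.
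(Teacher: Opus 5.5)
\textbf{Verdict: there is a genuine gap.} It sits exactly at the step you call the main obstacle: pairs $\widetilde{S_1}$, $\widetilde{S_2}$ taken from the two different $\rho_+$-orbit classes.

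The cluster-and-push procedure from the proof of Proposition~\ref{p.fullyorientablemarkov} does not \emph{produce} the ordering ``lift of the predecessor below lift of the successor''; it \emph{presupposes} it. It works there for two reasons:
\begin{itemize}
\item the big-brother lifts already satisfy Item~(3) of Proposition~\ref{p.fullyorientablemarkovstepone}, the boundary condition;
\item they satisfy Item~(4): two lifts with overlapping projections, if they meet, share a big brother and coincide over $R\cap S$.
\end{itemize}
Claims~1--4 rest on $\mathcal{B}(R)=\mathcal{B}(S)$ inside a cluster to show that clusters are finite chains of coincident sections. Only then does an $\epsilon$-push of $\widetilde{R_{min}}$ separate them.

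In your mixed family, the lifts over $G_+\cdot\rho(h)(R_i)$ are $\widetilde{f_h}$-images of big-brother lifts. Their flow-height relative to the big-brother lifts of the other class is dictated by the abstract map $\widetilde{f_h}=\widetilde{\Phi}_+^{T}\circ\widetilde{f_0}$ (Barbot's orbit equivalence plus the time correction $T$), which has no relation to big brothers. Nothing in your construction prevents, over $\mathrm{Int}(S_1)\cap\mathrm{Int}(S_2)$, the lift of a predecessor from lying above that of its successor by an order-one amount, or the two sections from crossing. In that case:
\begin{itemize}
\item Item~(4) and Claims~1--5 are unavailable, and finiteness of clusters is unjustified;
\item pushes by $\pm\epsilon$ cannot repair the order;
\item since $\widetilde{\Phi}$ commutes with $\widetilde{\rho}(G)$, pushing whole $\widetilde{\rho}$-orbits by constant times never changes the relative position of two rectangles in the same $\rho$-orbit, such as $\rho(g)(R_i)$ and $\rho(hg')(R_i)$.
\end{itemize}
So your sentence saying the pushes ``arrange'' the weak ordering has no mechanism behind it. Separately, the auxiliary lift $\widetilde{R}^-$ does not make sense as written. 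A segment from $x$ built using the reversed-orientation big brother ends in $\mathcal{F}^s_-(x)$, so it is not a point of $W^+$ over $x$.

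\textbf{The missing idea is a symmetrization that gains $\widetilde{f_h}$-invariance without destroying the ordering.} The paper starts from the genuine Markov lift $\widetilde{\mathcal{R}_+}$ of Proposition~\ref{p.fullyorientablemarkov}, where interiors are disjoint and the predecessor-below-successor ordering is strict. Its image $\widetilde{f_h}(\widetilde{\mathcal{R}_+})$ is again a lift of $\mathcal{R}$ with the same strict ordering, because $\widetilde{f_h}$ preserves orbits and time direction and $\rho(h)$ preserves the predecessor relation. Since $\widetilde{f_h}^2=\widetilde{\rho_+}(h^2)$ preserves $\widetilde{\mathcal{R}_+}$, the map $\widetilde{f_h}$ exchanges these two lifts. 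Taking, for each $R$, the flow-time midpoint of its two lifts therefore gives a $\widetilde{\rho}$-invariant family. Monotonicity of the midpoint map transfers the strict ordering, which yields disjoint interiors and the boundary condition of Definition~\ref{d.markovpartition} at once. Reducedness then follows, as you say, from non-nestedness in $\mathcal{R}$.

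Your orbit-representative construction gets invariance for free but discards exactly this ordering, and the proposal gives no way to recover it.
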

\begin{proof}
    Since $\rho_+$ is a fully orientable strong Markovian action and $\mathcal{R}$ satisfies the corner condition, by Proposition \ref{p.fullyorientablemarkov}, there exists $\widetilde{\mathcal{R}_+}$ a $\widetilde{\rho_+}$-invariant collection of rectangles in $W^+$ such that each rectangle of $\mathcal{R}$ lifts to a unique rectangle in $\widetilde{\mathcal{R}_+}$, $\pi( \widetilde{\mathcal{R}_+})=\mathcal{R}$, and also such that the projection of $\widetilde{\mathcal{R}_+}$ on $M_+$ defines a reduced Markov partition $\mathcal{R}_{M_+}$ of $\Phi_+$.

    If $\widetilde{f_h}=\widetilde{\rho}(h)$ preserves $\widetilde{\mathcal{R}_+}$, then, thanks to (\ref{eq.fundgroupgisomorphism}) and to our construction of $\Phi$, it is easy to see that $\mathcal{R}_{M_+}$ is invariant by $f_h$ and that $\widetilde{\mathcal{R}_+}$ projects on $M$ to a reduced Markov partition of $\Phi$, which proves the desired result. Assume from now on that $\widetilde{f_h}$ does not preserve $\widetilde{\mathcal{R}_+}$. Even though $\widetilde{\mathcal{R}_+}$ is not $\widetilde{f_h}$-invariant, we have that 
    \begin{equation}
        \widetilde{f^2_h}( \widetilde{\mathcal{R}_+})=\widetilde{\mathcal{R}_+}
    \end{equation}
    Indeed, this is an immediate consequence of the fact that $\widetilde{\rho_+}$ preserves $\widetilde{\mathcal{R}_+}$ and the fact that $\widetilde{f^2_h}= \widetilde{\rho}( h^2)= \widetilde{\rho_+}( h^2)$. 

    By the definition of $\widetilde{\mathcal{R}_+}$ and Equation~(\ref{eq.projectionrelation}), the projections of $\widetilde{\mathcal{R}_+}$ and $\widetilde{f_h}( \widetilde{\mathcal{R}_+})$ onto $\mathcal{P}$ both coincide with $\mathcal{R}$. Moreover, each rectangle in $\mathcal{R}$ admits a unique lift in $\widetilde{\mathcal{R}_+}$ and a unique lift in $\widetilde{f_h}( \widetilde{\mathcal{R}_+})$. More precisely, for any rectangle $R\in \mathcal{R}$ denote by $\widetilde{R}$ its lift inside $\widetilde{\mathcal{R}_+}$. Thanks to Equation (\ref{eq.projectionrelation}), we have that the lift of $R$ inside $\widetilde{f_h}( \widetilde{\mathcal{R}_+})$ is equal to $\widetilde{f_h}( \widetilde{\rho(h^{-1})( R)})$, the image by $\widetilde{f_h}$ of the lift of $\rho( h^{-1})( R)$ inside $\widetilde{\mathcal{R}_+}$. We will now define a $\widetilde{\rho}$-invariant collection of rectangles using the rectangles in   $\widetilde{\mathcal{R}_+}$ and in $\widetilde{f_h}( \widetilde{\mathcal{R}_+})$.

     For any $\widetilde{x_0}\in W^+$ and any $t_1,t_2\in \mathbb{R}$, if $\widetilde{x_1}:=\widetilde{\Phi}^{t_1}( \widetilde{x_0})$ and $\widetilde{x_2}:=\widetilde{\Phi^{t_2}}( \widetilde{x_0})$, then we define the \emph{middle-point} between $\widetilde{x_1}$ and $\widetilde{x_2}$ as: $$\frac{\widetilde{x_1}+ \widetilde{x_2}}{2}:=\widetilde{\Phi}^{\tfrac{t_1+t_2}{2}}( \widetilde{x_0})$$
    
    Notice that since $\widetilde{\Phi}$ is a flow, the point $\displaystyle{\frac{\widetilde{x_1}+ \widetilde{x_2}}{2}}$ does not depend on our choice of $\widetilde{x_0}$. Using this fact together with the continuity of $\widetilde{\Phi}$, it is easy to prove that the map associating to any pair $( \widetilde{x},t)\in W^+\times \mathbb{R}$ the point $\displaystyle{\frac{\widetilde{x}+ \widetilde{\Phi}^t( \widetilde{x})}{2}}$ is continuous. Furthermore, since $\widetilde{\Phi}$ is invariant by $\widetilde{\rho}$, we have that 
    \begin{equation}\label{eq.invariancerho}
        \forall g\in G \quad \widetilde{\rho}( g)( \frac{\widetilde{x_1}+ \widetilde{x_2}}{2})=\frac{\widetilde{\rho}( g)( \widetilde{x_1})+ \widetilde{\rho}( g)( \widetilde{x_2})}{2}
    \end{equation}
    
    For any $R\in \mathcal{R}$ and any $x\in R$ denote by $\widetilde{x}( {R},\widetilde{\mathcal{R}_+})$ (resp.  $\widetilde{x}( {R},\widetilde{f_h}( \widetilde{\mathcal{R}_+}))$) the unique lift of $x$ inside $\widetilde{R}\in \widetilde{\mathcal{R}_+}$ (resp. $\widetilde{f_h}( \widetilde{\rho( h^{-1})( R)})\in \widetilde{f_h}( \widetilde{\mathcal{R}_+})$). We define 
    
    \begin{equation}\label{eq.defirinv}
        \widetilde{R^{inv}}:=\left\{\frac{\widetilde{x}( {R},\widetilde{\mathcal{R}_+})+\widetilde{x}( {R},\widetilde{f_h}( \widetilde{\mathcal{R}_+}))}{2} ~\middle| ~x\in R\right\}
    \end{equation} and $\widetilde{\mathcal{R}}:=\{\widetilde{R^{inv}}|R\in \mathcal{R}\}$. For the sake of simplicity, from now on we will write $$\widetilde{R^{inv}}=\frac{\widetilde{R}+\widetilde{f_h}( \widetilde{\rho( h^{-1})( R)})}{2}$$
Let us now show that $\widetilde{\mathcal{R}}$ satisfies both of the properties given in the statement of this proposition. 

\noindent\textbf{Claim 1:} \textit{The set $\widetilde{R^{inv}}$  is a rectangle transverse to $\widetilde{\Phi}$ verifying $\pi( \widetilde{R^{inv}})=R$. Moreover, $\widetilde{\mathcal{R}}$ is a lift of $\mathcal{R}$ on $W^+$ for which each rectangle of $\mathcal{R}$ admits a unique lift in $\widetilde{\mathcal{R}}$.}
\begin{proof}[Proof of Claim 1.]
Indeed, fix $R\in \mathcal{R}$. Following the previous notations, the rectangles $\widetilde{R}\in \widetilde{\mathcal{R}_+}$ and $\widetilde{f_h}( \widetilde{\rho( h^{-1})( R)})\in \widetilde{f_h}( \widetilde{\mathcal{R}_+})$ are two lifts of $R$ in $W^+$ that are transverse to $\widetilde{\Phi}$. Thanks to the previous fact and to the  continuity of the middle-point map, $\widetilde{R^{inv}}$ is a closed topological disk in $W^+$ that is transverse to $\widetilde{\Phi}$ (indeed it intersects at most once every orbit of $\widetilde{\Phi}$) and that verifies $\pi( \widetilde{R^{inv}})=R$. We deduce that $\widetilde{R^{inv}}$ is a rectangle in $W^+$ and also that  $\pi( \widetilde{\mathcal{R}})=\mathcal{R}$. Finally, since $\widetilde{R}$ (resp. $\widetilde{f_h}( \widetilde{\rho( h^{-1})( R)})$) is the unique lift of $R$ inside $\widetilde{\mathcal{R}_+}$ (resp. $\widetilde{f_h}( \widetilde{\mathcal{R}_+})$), we get that $\widetilde{R^{inv}}$ is the unique lift of $R$ inside $\widetilde{\mathcal{R}}$. 
\end{proof}

\noindent\textbf{Claim 2:} \textit{The set $\widetilde{\mathcal{R}}$ is $\widetilde{\rho}$-invariant.}

\begin{proof}[Proof of Claim 2]Consider $R\in \mathcal{R}$ and $\widetilde{R}$ (resp. $\widetilde{f_h}( \widetilde{\rho( h^{-1})( R)})$, $\widetilde{R^{inv}}$) the unique lift of $R$ inside $\widetilde{\mathcal{R}_+}$ (resp. $\widetilde{f_h}( \widetilde{\mathcal{R}_+})$, $\widetilde{\mathcal{R}}$). By (\ref{eq.fundgroupgisomorphism}), in order to prove that $\widetilde{\mathcal{R}}$ is invariant by $\widetilde{\rho}$ it suffices to show that $\widetilde{\rho}( h)( \widetilde{\mathcal{R}})=\widetilde{f_h}( \widetilde{\mathcal{R}})=\widetilde{\mathcal{R}}$ and that $\widetilde{\rho}( g)( \widetilde{\mathcal{R}})= \widetilde{\mathcal{R}}$ for every $g\in G_+$. Indeed, \begin{align*}
    \widetilde{f_h}( \widetilde{R^{inv}})= \widetilde{\rho}( h)( \widetilde{R^{inv}})&=\widetilde{\rho}( h)\big( \frac{\widetilde{R}+\widetilde{f_h}( \widetilde{\rho( h^{-1})( R)})}{2}\big)\\
    &=\widetilde{\rho}( h)\big( \frac{\widetilde{R}+\widetilde{\rho}( h)( \widetilde{\rho( h^{-1})( R)})}{2}\big)
\end{align*}
By Equation (\ref{eq.invariancerho}) we have that
$$\widetilde{\rho}( h)( \widetilde{R^{inv}})=\frac{\widetilde{\rho}( h)( \widetilde{R})+\widetilde{\rho}( h^2)( \widetilde{\rho( h^{-1})( R)})}{2} $$
Since $\widetilde{R}\in \widetilde{\mathcal{R}_+}$ and $\widetilde{\rho}( h)=\widetilde{f_h}$, clearly we have that $\widetilde{\rho}( h)( \widetilde{R})\in \widetilde{f_h}( \widetilde{\mathcal{R}_+})$. Next, recall that $\widetilde{\rho( h^{-1})( R)}\in \widetilde{\mathcal{R}_+}$, that $\widetilde{\rho}( h^2)=\widetilde{\rho_+}( h^2)$ and that $\widetilde{\mathcal{R}_+}$ is invariant by $\widetilde{\rho_+}$. Using the previous facts we have that $\widetilde{\rho}( h^2)( \widetilde{\rho( h^{-1})( R)})\in \widetilde{\mathcal{R}_+}$. More specifically, by Equation (\ref{eq.projectionrho}) 
$$\pi( \widetilde{\rho}( h^2)( \widetilde{\rho( h^{-1})( R)}))=\rho( h)( R)\in \mathcal{R}$$

We deduce that $\widetilde{\rho}( h^2)( \widetilde{\rho( h^{-1})( R)})$ and $\widetilde{\rho}( h)( \widetilde{R})$ are the unique lifts of $\rho( h)( R)$ inside $\widetilde{\mathcal{R}_+}$ and $\widetilde{f_h}( \widetilde{\mathcal{R}})$ respectively and that $\widetilde{\rho}( h)( \widetilde{R^{inv}})\in \widetilde{\mathcal{R}}$.

Next, consider $g\in G_+$. Once again, by Equation (\ref{eq.invariancerho}), we have 

\begin{align*}
   \widetilde{\rho}( g)( \widetilde{R^{inv}})&=\widetilde{\rho}( g)( \frac{\widetilde{R}+\widetilde{f_h}( \widetilde{\rho( h^{-1})( R)})}{2})\\
    &=\widetilde{\rho}( g)( \frac{\widetilde{R}+\widetilde{\rho}( h)( \widetilde{\rho( h^{-1})( R)})}{2})\\
    &=\frac{\widetilde{\rho}( g)( \widetilde{R})+\widetilde{\rho}( gh)( \widetilde{\rho( h^{-1})( R)})}{2}\\
    &=\frac{\widetilde{\rho}( g)( \widetilde{R})+\widetilde{\rho}( h)\left( \widetilde{\rho}( h^{-1}gh)( \widetilde{\rho( h^{-1})( R)})\right)}{2}
\end{align*}

By an argument similar to the one used before, one can show that the rectangles  $\widetilde{\rho}( g)( \widetilde{R})$ and $\widetilde{\rho}( h)\left( \widetilde{\rho}( h^{-1}gh)( \widetilde{\rho( h^{-1})( R)}\right)$ are the unique lifts of $\rho( g)( R)$ inside $\widetilde{\mathcal{R}_+}$ and $\widetilde{f_h}( \widetilde{\mathcal{R}_+})$ respectively and that  $\widetilde{\rho}( g)( \widetilde{R^{inv}})\in \widetilde{\mathcal{R}}$. This  finishes the proof of Claim 2. 

\end{proof}

By Claims~1 and~2, in order to prove that $\widetilde{\mathcal{R}}$ satisfies both of the required properties of this proposition, it suffices to prove that $\widetilde{\mathcal{R}}$ projects on $M$ to a reduced Markov partition of $\Phi$. In order to do so, let us first show that

\textbf{Claim 3:} \textit{Any two distinct rectangles in $\widetilde{\mathcal{R}}$ have disjoint interiors.}

\begin{proof}
    The above fact is a consequence of the monotonicity of the middle-point map. More specifically, for every $\widetilde{x},\widetilde{y}\in W^+$ we write $\widetilde{x}>\widetilde{y}$ if there exists $t>0$ such that $\widetilde{\Phi}^t( \widetilde{x})=\widetilde{y}$. By the definition of the middle-point map, for every $\widetilde{x_1},\widetilde{y_1}, \widetilde{x_2},\widetilde{y_2}\in W^+$ belonging in the same $\widetilde{\Phi}$-orbit in $W^+$, if $\widetilde{x_1}>\widetilde{x_2}$ and $\widetilde{y_1}>\widetilde{y_2}$, then $$\frac{\widetilde{x_1}+\widetilde{y_1}}{2}>\frac{\widetilde{x_2}+\widetilde{y_2}}{2}$$

    Assume now by contradiction that there exist two distinct rectangles $\widetilde{R}^{inv}, \widetilde{S}^{inv} \in \widetilde{\mathcal{R}}$ such that $\mathrm{Int}( \widetilde{R}^{inv})\cap \mathrm{Int}( \widetilde{S}^{inv})\neq \emptyset$. In this case, setting $R=\pi( \widetilde{R}^{inv})\in \mathcal{R}$ and $S=\pi( \widetilde{S}^{inv})\in \mathcal{R}$, we have $\mathrm{Int}( R)\cap\mathrm{Int}( S)\neq \emptyset$. Hence, by Lemma~\ref{l.npredecessor}, we may assume without loss of generality that $R$ is a predecessor of some non-zero generation of $S$. Let  $\widetilde{R}, \widetilde{S}$ (resp. $\widetilde{f_h}( \widetilde{\rho( h^{-1})( R)})$, $\widetilde{f_h}( \widetilde{\rho( h^{-1})( S)})$) be the unique lifts of $R,S$ inside $\widetilde{\mathcal{R}_+}$ (resp. $\widetilde{f_h}( \widetilde{\mathcal{R}_+})$). Recall that for every $x\in R$ we denoted by $\widetilde{x}( {R},\widetilde{\mathcal{R}_+})$ and by   $\widetilde{x}( {R},\widetilde{f_h}( \widetilde{\mathcal{R}_+}))$ the unique lifts of $x$ inside $\widetilde{R}$ and $\widetilde{f_h}( \widetilde{\rho( h^{-1})( R)})$. Denote similarly by $\widetilde{x}( {R},\widetilde{\mathcal{R}})$ the unique lift of $x\in R$ inside $\widetilde{R}^{inv}$.  Since $\widetilde{\Phi}$ is invariant under $\widetilde{f_h}$ and $\widetilde{\mathcal{R}_+}$ is the lift of a reduced Markov partition of $\Phi_+$, we obtain that
$$\forall x\in \mathrm{Int}( R)\cap\mathrm{Int}( S) \quad \widetilde{x}( {S},\widetilde{\mathcal{R}_+})>\widetilde{x}( {R},\widetilde{\mathcal{R}_+}) \quad \text{and} \quad  \widetilde{x}( {S},\widetilde{f_h}( \widetilde{\mathcal{R}_+}))>\widetilde{x}( {R},\widetilde{f_h}( \widetilde{\mathcal{R}_+}))$$

    Using the previous fact, together with (\ref{eq.defirinv}) and the monotonicity of the middle-point map, we deduce that  
\begin{equation}\label{eq.sinvafterrinv}
        \forall x\in\mathrm{Int}( R)\cap\mathrm{Int}( S) \quad \widetilde{x}( {S},\widetilde{\mathcal{R}})>\widetilde{x}( {R},\widetilde{\mathcal{R}})
    \end{equation}

    This proves that $\mathrm{Int}( \widetilde{R}^{inv})\cap \mathrm{Int}( \widetilde{S}^{inv})=\emptyset$, contradicting our original hypothesis. 
\end{proof}
\textbf{Claim 4:} \textit{$\widetilde{\mathcal{R}}$ projects on $M$ to a reduced Markov partition of $\Phi$}
\begin{proof}
    By Claims 1 and 3, the rectangles in $\widetilde{\mathcal{R}}$ are transverse to $\widetilde{\Phi}$ and have mutually disjoint interiors. Moreover, by Claims 1 and 2,  $\widetilde{\mathcal{R}}$ is $\widetilde{\rho}$-invariant and consists of finitely many $\widetilde{\rho}$-orbits of rectangles. It follows that the projection of $\widetilde{\mathcal{R}}$ on $M$ is a finite set of rectangles that are transverse to $\Phi$ and have mutually disjoint interiors. In order to prove that this projection is indeed a reduced Markov partition of $\Phi$, it suffices to show that: 

    \begin{enumerate}
        \item for any two distinct rectangles $\widetilde{R}^{inv}, \widetilde{S}^{inv}\in \widetilde{\mathcal{R}}$ there does not exist a continuous function $\tau: \widetilde{R}^{inv} \rightarrow \mathbb{R}$ such that $\widetilde{\Phi}^{\tau}( \widetilde{R}^{inv})\subseteq \widetilde{S}^{inv}$. 
        \item for any $\widetilde{R}^{inv}, \widetilde{S}^{inv}\in \widetilde{\mathcal{R}}$ and any $\widetilde{x}\in \partial^s\widetilde{R}^{inv}$ (resp. $\widetilde{x}\in \partial^u\widetilde{R}^{inv}$) the future (resp. past) orbit of $\widetilde{x}$ by $\widetilde{\Phi}$ cannot intersect the interior of $ \widetilde{S}^{inv}$. 
    \end{enumerate}
    
   The first of the above statements follows directly from the equality $\pi(\widetilde{\mathcal{R}})=\mathcal{R}$ and the Markovian intersection axiom, according to which no two distinct rectangles in $\mathcal{R}$ are contained one in the other. We now prove the second statement. Suppose by contradiction that the future orbit by $\widetilde{\Phi}$ of some $\widetilde{x}\in \partial^s\widetilde{R}^{inv}$ intersects the interior of $ \widetilde{S}^{inv}$. Let $x:=\pi( \widetilde{x})$, $R=\pi( \widetilde{R}^{inv})$ and $S=\pi( \widetilde{S}^{inv})$. Since $x\in \partial^s R\cap \mathrm{Int}( S)$, we have that $\mathrm{Int}( R)\cap \mathrm{Int}( S)\neq \emptyset$ and more specifically that $R$ is a successor of some generation of $S$. Thanks to (\ref{eq.sinvafterrinv}),  if $ \widetilde{x}( {S},\widetilde{\mathcal{R}_+})$ denotes the unique lift of $x$ inside $\widetilde{S}^{inv}$, then $\widetilde{x}$ belongs in the future orbit of $\widetilde{x}( {S},\widetilde{\mathcal{R}_+})$, which contradicts our original hypothesis. We similarly show that the past orbit of every point in $\partial^u\widetilde{R}^{inv}$ cannot intersect the interior of $ \widetilde{S}^{inv}$. 
\end{proof}

This finishes the proof of Proposition~\ref{p.perturbmarkov}, and thus also  the proof of Theorem~\ref{t.main}.
\end{proof}

\end{document}